\documentclass{article}
\usepackage[latin1]{inputenc}
\usepackage[swedish,english]{babel}
\usepackage{amsmath,amsthm}
\usepackage{mathtools}
\usepackage{amssymb}
\usepackage{bm}
\usepackage{ifpdf}
\ifpdf
\usepackage[pdftex]{graphicx}
\usepackage{epstopdf}
\else
\usepackage[dvips]{graphicx}
\fi
\usepackage{caption}
\usepackage{subcaption}
\usepackage[pdftitle={The PDF title},
pdfauthor={Vidar Stiernstrom},
pdffitwindow=true,
breaklinks=true,
colorlinks=true,
urlcolor=blue,
linkcolor=red,
citecolor=red,
anchorcolor=red]{hyperref}
\usepackage{todonotes}
\usepackage{natbib}
\usepackage{doi}

%**************************************************************************
% For revision! Remove for final submission!
%\newcommand{\reviewerOne}[1]{{\color{blue}{#1}}}
%\newcommand{\reviewerTwo}[1]{{\color{red}{#1}}}
%\newcommand{\changed}[1]{{\color{violet}{#1}}}
% remove color
\newcommand{\reviewerOne}[1]{{#1}}
\newcommand{\reviewerTwo}[1]{{{#1}}}
\newcommand{\changed}[1]{{{#1}}}

%**************************************************************************

\numberwithin{equation}{section}
\numberwithin{table}{section}
\numberwithin{figure}{section}

\theoremstyle{plain}
\newtheorem{theorem}{Theorem}[section]
\newtheorem{lemma}[theorem]{Lemma}

\theoremstyle{definition}

\theoremstyle{remark}
\newtheorem*{remark}{Remark}

% Misc
\newcommand{\ie}{i.\,e.\,}
\newcommand{\eg}{e.\,g.\,}

% Expressions
\newcommand{\ip}[2]{\left(#1,#2\right)}
\newcommand{\ipf}[2]{\ip{#1}{#2}_{\Gamma \times \T}}

\newcommand{\ipo}[2]{\ip{#1}{#2}_{\Omega \times \T}}
\newcommand{\ipdo}[2]{\ip{#1}{#2}_{\partial \Omega \times \T}}

\newcommand{\jump}[1]{\ensuremath{[\![#1]\!]} }
\newcommand{\dbar}[1]{\bar{\bar{#1}}}
\newcommand{\abs}[1]{\lvert #1 \rvert}

% Domains
\newcommand{\T}{\mathcal{T}}
\newcommand{\bOmega}{\mathbf{\Omega}}

\newcommand{\bGamma}{\bm{\Gamma}}

% Derivatives
\newcommand{\dI}{\partial_{I}}
\newcommand{\dK}{\partial_{K}}

\newcommand{\pdd}[2]{\frac{\partial#1}{\partial#2}}
\newcommand{\ddd}[2]{\frac{\delta#1}{\delta#2}}

% Functionals
\newcommand{\F}{\mathcal{F}}
\renewcommand{\L}{\mathcal{L}}

% Vectors
\newcommand{\brx}{\bar{x}}
\newcommand{\bru}{\bar{u}}
\newcommand{\brtau}{\bar{\tau}}
\newcommand{\brV}{\bar{V}}
\newcommand{\brT}{\bar{T}}
\newcommand{\brn}{\bar{n}}
\newcommand{\brr}{\bar{r}}
\newcommand{\brv}{\bar{v}}
\newcommand{\brR}{\bar{R}}
\newcommand{\brF}{\bar{F}}

% Operators and gridfunctions

\newcommand{\bx}{\mathbf{x}}
\newcommand{\bu}{\mathbf{u}}
\newcommand{\bv}{\mathbf{v}}
\newcommand{\bw}{\mathbf{w}}

\newcommand{\bp}{\mathbf{p}}
\newcommand{\bV}{\mathbf{V}}
\newcommand{\bPsi}{\bm{\Psi}}
\newcommand{\btau}{\bm{\tau}}

\newcommand{\brho}{\bm{\rho}}
\newcommand{\bmu}{\bm{\mu}}
\newcommand{\bZ}{\bm{Z}}

\newcommand{\be}{\mathbf{e}}

\newcommand{\bH}{\mathbf{H}}
\newcommand{\bD}{\mathbf{D}}

\newcommand{\bR}{\mathbf{R}}

\newcommand{\bI}{\mathbf{I}}
\newcommand{\bF}{\mathbf{F}}
\newcommand{\bG}{\mathbf{G}}
\newcommand{\bS}{\mathbf{S}}
\newcommand{\bQ}{\mathbf{Q}}
\newcommand{\bA}{\mathbf{A}}
\newcommand{\bU}{\mathbf{U}}

\newcommand{\bT}{\mathbf{T}}
\newcommand{\bL}{\mathbf{L}}

\title{Adjoint-based inversion for stress and frictional parameters in earthquake modeling}

\author{Vidar Stiernstr\"om\thanks{Department of Information Technology, Uppsala University, SE-751 05 Uppsala, Sweden, \href{mailto:cstierns@stanford.edu}{cstierns@stanford.edu}. Now at Department of Geophysics, Stanford University, Stanford CA 94305, USA}, Martin Almquist\thanks{Department of Information Technology, Uppsala University, SE-751 05 Uppsala, Sweden, \href{mailto:martin.almquist@it.uu.se}{martin.almquist@it.uu.se}}, Eric M. Dunham\thanks{Department of Geophysics, Stanford University, Stanford CA 94305, USA, \href{mailto:edunham@stanford.edu}{edunham@stanford.edu}}}
\date{}
\begin{document}

\maketitle

\begin{abstract}
  We present an adjoint-based optimization method to invert for stress and frictional parameters used in earthquake modeling. The forward problem is linear elastodynamics with nonlinear rate-and-state frictional faults. The misfit functional quantifies the difference between simulated and measured particle displacements or velocities at receiver locations. The misfit may include windowing or filtering operators. We derive the corresponding adjoint problem, which is linear elasticity with linearized rate-and-state friction \changed{and, for forward problems involving fault normal stress changes, nonzero fault opening}, with time-dependent coefficients derived from the forward solution. The gradient of the misfit is efficiently computed by convolving forward and adjoint variables on the fault. The method thus extends the framework of full-waveform inversion to include frictional faults with rate-and-state friction. In addition, we present a space-time dual-consistent discretization of a dynamic rupture problem with a rough fault in antiplane shear, using high-order accurate summation-by-parts finite differences in combination with explicit Runge--Kutta time integration. The dual consistency of the discretization ensures that the discrete adjoint-based gradient is the exact gradient of the discrete misfit functional as well as a consistent approximation of the continuous gradient. Our theoretical results are corroborated by inversions with synthetic data. We anticipate that adjoint-based inversion of seismic and/or geodetic data will be a powerful tool for studying earthquake source processes; it can also be used to interpret laboratory friction experiments.
\end{abstract}

\section{Introduction}
Earthquake cycle and dynamic rupture simulations with fault dynamics governed by rate-and-state friction have emerged as promising tools to better understand the processes governing earthquake nucleation and seismicity. Dynamic source models are complementary to kinematic source models that express the seismic wavefield and solid displacements through a convolution of the specified space-time history of slip with elastic Green's functions. Kinematic models are widely used in slip inversions, which can be set up as linear least squares problems due to the linear relation between slip and the wavefield. However, kinematic inversions only reveal how faults slip, but not why they slip in a certain way. Furthermore, the solutions to the inverse problem can violate certain physical constraints, such as dissipation rather than creation of mechanical energy during frictional sliding. Dynamic source models go beyond kinematic source models through the introduction of friction laws that are based on laboratory experiments. These friction laws involve nonlinear relations between fault shear and normal tractions, slip velocity, and one or more state variables that capture the dependence of frictional strength on the slip history of the interface. After specification of the initial stresses and frictional parameters, dynamic rupture and earthquake cycle models provide both the slip history and wavefield (or quasi-static solid displacement field) as part of the solution to the nonlinear problem. Connecting cycle and dynamic rupture simulations to real data permits determination of the stresses and frictional parameters, which are otherwise difficult or impossible to constrain. This requires augmenting dynamic source models with inversion capabilities. The inversion is a PDE-constrained optimization problem where we seek parameter values that minimize the misfit between model output and data. We propose a gradient-based optimization where the gradient of the misfit is computed with only two simulations: one of the forward problem and one of the adjoint problem. Gradient-based optimization with the adjoint method is the preferred approach in full waveform inversions (FWI) in seismic tomography \changed{and reflection seismology} \cite{Tromp2005, Fichtner2011, virieux_operto_2009}.
   
To date, dynamic rupture inversions have been done either by grid-search algorithms or Bayesian methods that require running thousands to order of a million forward models \cite{Peyrat2004,Gallovic2019a,Gallovic2019b,Premus2022}. Friction inversion for earthquake cycle models has been posed as a sequential data assimilation problem that is solved, for example, using the ensemble Kalman filter \cite{vanDinther2019,Hirahara2019}. Frictional properties are treated as part of the state vector and updated in time. However, these parameters should not be time-dependent and we seek an alternative formulation that respects this.

In this work, we present an adjoint-based optimization framework for inverting rate-and-state frictional parameters and initial stresses from geodetic and seismic observations. The method presented in this work extends FWI to include nonlinear rate-and-state friction laws. Similar to FWI, the adjoint equations are the same linear elasticity equations as the forward equations, but with fault dynamics governed by what resembles linearized rate-and-state friction \changed{and, for forward problems involving fault normal stress changes, a nonzero fault opening condition}. The adjoint friction law\changed{, state evolution equation, and the opening condition} include time-dependent coefficients that are functions of \changed{slip velocity, state, and normal stress} from the forward problem. The framework is presented for dynamic elasticity but may be straightforwardly adapted to quasi-static or quasi-dynamic settings, to be used in earthquake cycle or aseismic slip simulations. We note that a special case of this method has been derived for quasi-dynamic cycle models, for the spatially discretized problem with a boundary element discretization of the linear elastic slip-stress change relation, by Kano et al. \cite{Kano2013,Kano2015,Kano2020}. We also draw attention to new work exploring the use of physics-informed neural networks (PINNs) for inversion of rate-and-state friction parameters \cite{rucker2023physics}. \reviewerTwo{Additionally, FWI has been applied to seismic source inversion in, \eg, \cite{sjogreen_petersson_2014}, with seismic sources modeled as point moment tensor forcings in the elastic wave equation.}

To demonstrate the adjoint-based optimization framework we consider a 2D dynamic rupture problem with a rough fault in antiplane shear, discretized using finite difference methods and explicit Runge--Kutta time stepping methods. High-order finite difference methods satisfying a summation-by-parts (SBP) property have successfully been applied to linear elasticity with rate-and-state friction in \eg, \cite{kozdon_et_al_2013, erickson_dunham_2014, oreilly_et_al_2015, duru_dunham_2016, duru_et_al_2019, harvey_et_al_2023}, and extended to poro- and viscoelasticity in \cite{torberntsson_et_al_2018, alison_dunham_2018, heimisson_et_al_2019}. In this work, we utilize the recently developed boundary-optimized second derivative operators of \cite{stiernstrom_et_al_2023}, providing increased accuracy for problems where boundary or interface effects are of particular importance. The operators are combined with the non-stiff interface treatment based on characteristic variables presented in \cite{erickson_et_al_2022}. The fault interface conditions are imposed weakly, using simultaneous approximation terms (SAT). The resulting spatial SBP-SAT discretization allows for efficient time integration by means of explicit methods. \reviewerOne{We prove that the SBP-SAT discretization of the misfit functional and forward problem is dual consistent \cite{pierce_giles_2000, hartmann_2007, berg_nordstrom_2012, hicken_zingg_2014, ghasemi_thesis}, meaning that in deriving the semi-discrete adjoint equations one obtains a consistent discretization of the continuous adjoint problem. As a result,} the gradient of the semi-discrete problem is the exact gradient (to machine precision) of the semi-discrete misfit functional as well as a consistent approximation of the continuous gradient. \reviewerOne{Obtaining an exact gradient to the discrete misfit is beneficial from a numerical point of view, since discretization errors present in the gradient otherwise may negatively affect the convergence of the optimization algorithm \cite{giles_pierce_2001}, especially when simulations are carried out on marginally resolved grids.} \reviewerOne{That the characteristics-based SAT of \cite{erickson_et_al_2022}, when paired with an SBP discretization, yields a dual-consistent scheme is to our knowledge a new result.} We extend dual consistency to the fully discrete problem by employing the standard fourth-order Runge--Kutta time stepping and using the associated Runge--Kutta quadrature to discretize the time integral in the misfit functional \cite{sanz_serna_2016}. While our discretization is based on SBP finite differences, dual consistency should straightforwardly extend to other methods satisfying SBP or corresponding discrete integration-by-parts properties. This includes discontinuous- and continuous finite element methods as well as finite volume methods.

Our theoretical findings are corroborated by a series of numerical experiments on model problems using synthetic data. First, the discrete adjoint-based gradient is compared to a brute-force finite-difference approximation of the gradient. Second, we perform inversions for frictional parameters in an inverse crime setting, as well as using high-resolution synthetic data, demonstrating the capabilities of the method. 

The article is developed as follows: In Section \ref{sec:notation} we establish the notation used in the continuous and discrete analysis. In Section \ref{sec:elasticity} the equations of linear elasticity and rate-and-state friction are presented. In Section \ref{sec:inverse} the inverse problem is formulated as a PDE-constrained optimization problem. The adjoint-based gradient of the misfit functional is derived in Section \ref{sec:adjoint_eqs}. We demonstrate how filtered and windowed residuals in particle displacement and velocity enter the adjoint equations and discuss well-posedness of the \changed{forward and} adjoint problem. In Section \ref{sec:antiplane_shear} we present the dual-consistent SBP-SAT discretization for dynamic rupture simulations in antiplane shear and use it to invert synthetic data, demonstrating that the method indeed is capable of reconstructing frictional parameters. Finally, the article is concluded in Section \ref{sec:conclusion}.

\section{Notation}\label{sec:notation}
\subsection{Integrals, inner products and index notation}
The $L^2$-inner product for scalar functions $u$, $v$ on $\Omega$ is denoted
\begin{equation}\label{eq:inner_product_cont}
    \ip{u}{v}_\Omega := \int_\Omega uv \mathrm{d}\Omega.
\end{equation}
Similarly, define the bilinear form for integrating over a surface $\Gamma \in \partial\Omega$ as 
\begin{equation}\label{eq:bilinear_form_cont}
    \ip{u}{v}_\Gamma := \int_\Gamma uv \mathrm{d}\Gamma.
\end{equation}
The same notation is used when integrating over a space-time domain, \ie, for the domain $\Omega \times \T$ with $\T = [0,T]$, we write
\begin{equation}\label{eq:inner_product_space_time_cont}
  \ip{u}{v}_{\Omega \times \T} := \int_0^T \ip{u}{v}_\Omega \mathrm{d}t,
\end{equation}
and correspondingly for $\ip{u}{v}_{\Gamma \times \T}$. 

\reviewerOne{Vector-valued functions are denoted with a bar, \eg, $\bar{u} = [u_I]_{I=1}^d$ where $d = \dim(\Omega)$.} Throughout the article, we will make use of index notation. Here the Einstein summation convention is applied to sum over repeated indices $I, J, K, L$ used to denote spatial components. For instance, \eqref{eq:inner_product_space_time_cont} is extended to vector-valued functions $\bar{u}$ and $\bar{v}$ as
\reviewerTwo{
\begin{equation}\label{eq:inner_product_vector_valued_cont}
    \ip{u_I}{v_I}_{\Omega \times T} := \sum_{I=1}^d \ip{u_I}{v_I}_{\Omega \times \T} = \ip{\bar{u}}{\bar{v}}_{\Omega \times T}.
\end{equation}
}Furthermore, we will use the notation $\partial_I := \frac{\partial}{\partial x_I}$, and the summation convention then applies to derivatives. For instance, the variable coefficient Laplace operator is $\partial_I \mu \partial_I$. \reviewerOne{In a few places we make use of mixed index and vector notation. For instance, $f_I(\bar{u})$, means that the $I$th component of the vector-valued function $\bar{f}$ is a function of the components of $\bar{u}$.}

\subsection{Grid functions and discrete operators}
Boldface symbols will be used to denote discrete quantities, \eg, the discrete grid function approximating the function $u(\brx)$, $\brx \in \Omega$ is denoted $\bu$ and is defined on the grid $\bOmega$. \reviewerOne{Similarly, vector-valued grid functions are denoted with a bar}, \eg, the grid function restriction of $\bar{u}$ is denoted $\bar{\bu}$. Scalar functions acting as diagonal operators are denoted with double bars, \eg, the operator approximating $f(\brx)$ is $\dbar{\mathbf{f}} := \text{diag}(\mathbf{f})$.

$\bH$ with appropriate subscripts will be used to denote quadrature rules. For instance, a quadrature on $\bOmega$ is denoted $\bH_{\Omega}$. The notation for inner products is analogous to the continuous setting. For instance 
\begin{equation}\label{eq:inner_product_disc}
    \ip{\bu}{\bv}_{\bOmega} := \bu^T\bH_{\Omega}\bv.
\end{equation}
For numerical integration along a boundary grid segment $\bGamma$, we first introduce the \reviewerOne{boundary restriction operators $\be^T_\Gamma$}, where $\be_\Gamma^T\bu \approx u(\brx), \brx \in \Gamma$. A discrete bilinear form approximating \eqref{eq:bilinear_form_cont} is then defined as
\begin{equation}\label{eq:bilinear_form_disc_gamma}
    \ip{\bu}{\bv}_{\bm{\Gamma}} := (\be_\Gamma^T\bu)^T\bH_\Gamma (\be_\Gamma^T\bv),
\end{equation}
where $\bH_\Gamma$ is a quadrature rule on $\bGamma$. A bilinear form over the entire boundary grid $\bm{\partial \Omega}$ is defined by summing over all boundary grid segments, \ie,
\begin{equation}\label{eq:bilinear_form_disc}
  \ip{\bu}{\bv}_{\bm{\partial \Omega}} := \sum_{\bGamma \subset \bm{\partial \Omega}} \ip{\bu}{\bv}_{\bm{\Gamma}} .
\end{equation}
In a semi-discrete setting, where time is left continuous, we will also use semi-discrete inner products, \eg, $\ip{\bu}{\bv}_{\bOmega \times \T} := \int_0^T \bu^T\bH_{\bOmega}\bv \mathrm{d}t$. The summation convention for $I,J,K,L$ also applies to discrete relations. For instance, for vector-valued grid functions $\bar{\bv}$ and $\bar{\bu}$ the extension of \eqref{eq:inner_product_disc} is
\begin{equation}\label{eq:inner_product_vector_valued_discr}
    \ip{\bu_I}{\bv_I}_{\bOmega} := \sum_{I=1}^d \ip{\bu_I}{\bv_I}_{\bOmega}.
\end{equation}

\section{Linear elasticity with rate-and-state friction}\label{sec:elasticity}
Consider a linear elastic medium $\Omega = \Omega_- \cup \Omega_+$, separated by a frictional fault $\Gamma = \Omega_- \cap \Omega_+$. Let $\bru$ denote the displacement vector. From Hooke's law, the changes in the stress tensor $\sigma$ and traction vector $\brT$ for elastic deformations about a prestressed reference configuration are
\begin{equation}
  \sigma_{IJ} \changed{:=} C_{IJKL} \dK u_L,   
\end{equation} 
and
\begin{equation}
  T_J \changed{:=} n_I \sigma_{IJ},
\end{equation}
where $\bar{n}$ denotes the outward unit normal and $C_{IJKL}$ is the elastic stiffness tensor. (We note that these stress changes are sometimes denoted as $\Delta \sigma_{IJ}$.) \changed{Henceforth, superscripts $+$ and $-$ will be used to distinguish between fields on the $+$ and $-$ sides of the fault.} The \changed{change in} shear traction \changed{on the $-$ side of the fault} is the projection of $\brT^{-}$ onto the fault plane, given by
\begin{equation}
  \brtau \changed{:=} \brT^{\changed{-}} - (\brn^{\changed{-}} \cdot \brT^{\changed{-}}) \brn^{\changed{-}},
\end{equation}
\changed{
%where $\brn^-$ is the outward unit normal on the $-$ side of the fault (pointing from the $-$ side to the $+$ side).
while the change in compressive normal stress is
\begin{equation}
  \sigma_n \changed{:=} - \brn^- \cdot \brT^-.
\end{equation}

We denote the the jump in particle velocity across the fault by
\begin{equation}
\jump{\dot{\bru}} := \dot{\bru}^+ - \dot{\bru}^- = V_n \brn^- + \brV,
\end{equation}
where the opening velocity $V_n$ and the slip velocity $\brV$ are defined as the normal component of $\jump{\dot{\bru}}$ and the projection of $\jump{\dot{\bru}}$ onto the fault plane, respectively, \ie,
\begin{align}
  V_n &:= \brn^- \cdot \jump{\dot{\bru}}, \label{eq:opening_velocity} \\
 \brV &:= \jump{\dot{\bru}} - V_n \brn^-. \label{eq:slip_velocity} 
\end{align}

Exploiting linearity of the elasticity equations, the total stress on the fault is the sum of prestress, assumed to be in equilibrium with any external loading, and changes in stress due to the displacement $\bru$.} \reviewerOne{Denoting the initial compressive normal stress as $\sigma_n^0$, the total compressive normal stress is}
%\reviewerOne{Given initial compressive normal stress $\sigma_n^0$, the total compressive normal stress $\sigma_n^{tot}$ used for evaluating frictional strength is}
\changed{\begin{equation}\label{eq:tot_fault_normal_stress}
 \sigma_n^{tot} := \sigma_n^0 + \sigma_n.
\end{equation}
The total shear traction vector is
\begin{equation}\label{eq:tot_fault_sher_stress}
  \brtau^{tot} := \brtau^0 + \changed{\brtau} - \eta \brV, 
\end{equation}
where $\bar{\tau}^0$ is the initial shear traction and $\eta \brV$ is the radiation damping term. The radiation damping term is only used in quasi-dynamic models, where the coefficient $\eta$ is the impedance of shear waves radiating away from the fault \cite{rice_1993}. In the fully dynamic setting, $\eta$ is set to zero. Although we only perform experiments with $\eta=0$ in this paper, we include the radiation damping term in the derivation for completeness.}

Force balance across the fault and the condition of no opening or interpenetration of the fault walls are stated as
\begin{align}
  \brT^+ &= - \brT^-, \label{eq:force_balance} \\
  \changed{V_n} &\changed{= 0} \label{eq:no_opening}. 
\end{align}
\reviewerOne{The force balance condition \eqref{eq:force_balance} is what permits the simplified notation in \eqref{eq:tot_fault_normal_stress} - \eqref{eq:tot_fault_sher_stress}, where $\brtau^{tot}$ and $\sigma_n^{tot}$, without superscripts $+$ or $-$, are used to represent values for shear and normal tractions on the two sides of the fault.}
\reviewerTwo{We also restrict attention to problems where $\sigma_n^{tot}$ remains compressive. A more general problem formulation would allow for fault opening and the transition to traction-free fault walls when constraining the fault against opening would lead to tensile $\sigma_n^{tot}$. This requires enforcing inequality constraints \cite{day_et_al_2005}.
It is more common in earthquake modeling to always enforce the no opening condition, but then to handle tensile normal stresses by setting $\sigma_n^{tot}$ to zero when evaluating frictional strength \cite{harris_et_al_2009,harris_et_al_2018,erickson_et_al_2023}.}

Equating shear traction with frictional shear strength yields
\begin{equation}\label{eq:fault_strength}
  \brtau^{tot} = \sigma_n^{tot} f \frac{\brV}{\abs{\brV}},
\end{equation}
where $f$ is the friction coefficient. \reviewerOne{In this work, we consider rate- and state-dependent friction coefficients $f(\abs{\brV},\Psi)$, where the dimensionless state variable $\Psi$ is a measure of the interface contact strength related to the past history of sliding \cite{dieterich_1979,rice_ruina_1983,ruina_1983}.} Solving \eqref{eq:tot_fault_sher_stress} and \eqref{eq:fault_strength} for \changed{$\brtau$ and using tangential components of \eqref{eq:force_balance} leads to the rate-and-state friction law, given by
\begin{equation} \label{eq:friction_law_fwd}
  \brtau = \brF(\brV,\Psi, \sigma_n ) :=  \left(\sigma_n^0 + \sigma_n \right) f(\abs{\brV},\Psi) \frac{\brV}{\abs{\brV}} - \brtau^0 + \eta \brV.  
\end{equation}
 % The rate-and-state friction law is given by
% \begin{equation}\label{eq:friction_law_fwd}
%   \brF(\brV,\Psi\changed{, \sigma_n}) = \changed{\left(\sigma_n^0 + \sigma_n \right) }f(\abs{\brV},\Psi) \frac{\brV}{\abs{\brV}} - \brtau^0 + \eta \brV.
% \end{equation}
Note that $\brtau$, $\brV$ and $\brF$ are three-component vectors. However, since $\brtau$ and $\brV$ are coplanar, one of the three equations implied by $\brtau = \brF$ is $0=0$, and the third interface condition is given by the no opening condition \eqref{eq:no_opening}.

The state variable $\Psi$ is in turn governed by a state evolution equation
\begin{equation}\label{eq:state_evolution_fwd}
  \dot{\Psi} = G(\brV, \Psi, \sigma_n, \dot{\sigma}_n).
\end{equation}
} \reviewerTwo{The rate-and-state friction coefficient $f(|\bar{V}|,\Psi)$ and state evolution equation $G(\bar{V},\Psi, \sigma_n, \dot{\sigma}_n)$ are empirical laws obtained from laboratory experiments \cite{dieterich_1979,linker_dieterich_1992,rice_ruina_1983,marone_1998}. They contain parameters which can be difficult to constrain at depths that are inaccessible to drilling. Even if laboratory friction experiments are performed on core samples from drilling, the relation between experimental parameter values and parameter values relevant for large-scale slip remains unclear. Examples of such parameters are the direct effect parameter $a$ and the state evolution parameter $b$, which we define in a later section. Their difference, $a-b$, determines if friction increases or decreases with slip velocity, which controls the stability of sliding and the possibility of unstable rupture. These parameters may be spatially variable along the fault, but are time-independent.}

\changed{We are now ready to state the forward problem.} Let $\bar{Q}(\brx,t)$ denote external forcings. The governing equations of the forward problem are given by
\begin{equation}\label{eq:elastic_3d_fwd}
  \begin{array}{lll}
  \smallskip
    \rho  \ddot{u}_J = \dI C_{IJKL} \dK u_L + Q_J, & \brx\in \Omega, & t\in \T,\\
    %------------------
    \smallskip
    \bru = \bru_0, \quad \dot{\bru} = \brv_0, & \brx \in \Omega, & t = 0, \\
    %------------------
    \smallskip
    L \bru = \bar{g} , & \brx \in \partial\Omega, & t\in \T, \\
    %------------------
    \smallskip
    % \brn^+ \cdot \bru^+ = -\brn^- \cdot \bru^- 
    \changed{V_n=0} , & \brx \in \Gamma, & t\in \T, \\
    %------------------
    \smallskip
    \changed{\brT^+ = -\brT^-}, & \brx \in \Gamma, & t\in \T, \\
    %------------------
    \smallskip
    \displaystyle
    \changed{\brtau = \brF(\brV, \Psi , \sigma_n)} , & \brx \in \Gamma, & t\in \T, \\
    %------------------
    \smallskip
    \dot{\Psi} = G(\brV,\Psi \changed{,\sigma_n, \dot{\sigma}_n}), & \brx \in \Gamma, & t\in \T, \\
    %------------------
    \smallskip
    \Psi = \Psi_0, & \brx \in \Gamma, & t = 0,
  \end{array}
\end{equation}
where the first equation is the momentum balance equation \changed{with Hooke's law used to replace stress with spatial derivatives of displacement}, $\bru_0$ and $\brv_0$ are initial data, and $L \bru = \bar{g}(\brx,t)$ denotes boundary conditions on exterior boundaries, for a boundary operator $L$.  The data $\bru_0$, $\brv_0$, and $\bar{g}$ are assumed to be independent of \reviewerTwo{frictional parameters or initial stress} that we later invert for. We will consider $L$ such that for homogeneous boundary data $\bar{g} = \bar{0}$, the boundary conditions are of the form
\begin{equation} \label{eq:bc_general_fwd}
  \changed{T}_J = -\alpha_{JL} \dot{u}_L,
\end{equation}
where $\alpha_{JL} = \alpha_{LJ}$ is positive semi-definite. Note that this includes \reviewerOne{traction-free} conditions ($\alpha_{JL}=0$), \changed{rigid-wall} conditions ($\alpha_{JL} \rightarrow \infty$), and characteristic \changed{non-reflecting} conditions \cite{petersson_sjogreen_2009}. \changed{Again, we emphasize that the friction law $\brtau = \brF$ constitutes two equations, such that there is no overspecification of the fault interface conditions.}

\section{The inverse problem}\label{sec:inverse}
Now consider $N_{rec}$ receivers positioned at $\brx = \brx_r^{(k)}$, $k = 1,\dots,N_{rec}$, each with a time series of measurements $\bar{m}_{data}^{(k)}(t)$ of either particle displacements ($\bar{m} = \bar{u}$) or velocities (\reviewerTwo{$\bar{m} = \dot{\bar{u}}$}). We seek parameter values that minimize the residual $\brr$, defined as the difference between model predictions at the receiver locations (obtained by solving \eqref{eq:elastic_3d_fwd}) and the measured data, \ie, 
\begin{equation}\label{eq:residual}
  \brr^{(k)}(t) = \bar{m}(\brx_r^{(k)},t) - \bar{m}_{data}^{(k)}(t) .
\end{equation}
In practice, the residual is often filtered or windowed. We therefore consider applying linear operators $W^{(k)}$ to obtain the adjusted residuals
\begin{equation}\label{eq:adjusted_residual}
  \brR^{(k)}(t) = W^{(k)}\left[ \brr^{(k)} \right](t) .
\end{equation} 
The notation $W^{(k)}\left[ \brr^{(k)} \right]$ means that the linear operator $W^{(k)}$ acts on $\brr^{(k)}$. The result is the time-dependent adjusted residual. We define the misfit functional $\F$ as the sum of the $L^2$-norms of the adjusted residuals:
\begin{equation}\label{eq:misfit}
  \F = \frac{1}{2} \sum_{k=1}^{N_{rec}} \int \limits_{0}^{T} \abs{ \brR^{(k)}(t) }^2 \, \mathrm{d}t. 
\end{equation}
Weighting of the different terms can be included in the $W^{(k)}$ operator.

For a model parameter $p$ in \eqref{eq:elastic_3d_fwd}, the inverse problem is the PDE-constrained optimization problem given by
\begin{equation}\label{eq:inverse_prob_3d}
  \min_{p} \F \text{ subject to \eqref{eq:elastic_3d_fwd}}.
\end{equation}
\reviewerTwo{In this work, we particularly consider the case where $p(\bar{x})$, $\bar{x} \in \Gamma$ is a spatially variable parameter in the rate-and-state friction coefficient $f$ in \eqref{eq:friction_law_fwd} and/or state evolution equation $G$ in \eqref{eq:state_evolution_fwd} (\eg, the direct effect parameter $a$). Additionally, $p$ may also represent initial stresses $\brtau^0$ and $\sigma_n^0$ in $\eqref{eq:friction_law_fwd}$ or initial state $\Psi_0$.} To solve \eqref{eq:inverse_prob_3d} using gradient-based optimization algorithms, the misfit gradient $\ddd{\F}{p}$ is required\reviewerTwo{, where $\ddd{}{p}$ denotes the functional derivative with respect to $p$.}  However, taking the functional derivative of \eqref{eq:misfit} directly requires computing $\ddd{\bar{m}}{p}$ (see Section \ref{sec:misfits}). In a discrete setting, where $\bp$ is represented by $N$ degrees of freedom, even the simplest first-order difference approximation of $\ddd{\F}{p}$ would require \eqref{eq:elastic_3d_fwd} to be solved numerically $N+1$ times. This is of course not feasible for large-scale 3D computations. We therefore seek an alternative expression through the adjoint-state framework.

\section{Adjoint equations and misfit gradient}\label{sec:adjoint_eqs}
We begin this section by presenting the adjoint equations and the adjoint-based gradient of the misfit \eqref{eq:misfit} in the form of a theorem. To this end, introduce the following adjoint variables:
\begin{equation}\label{eq:adjoint_variables}
  \begin{array}{ll}
  \smallskip
  \bru^{\dagger} & \text{(adjoint displacement)}, \\
  %------------------------------------------------
  \smallskip
  \Psi^{\dagger} & \text{(adjoint state variable)}, \\
  %------------------------------------------------
  \smallskip
  \changed{V^{\dagger}_n :=  - \brn^- \cdot \jump{\dot{\bru}^{\dagger}}} & \changed{\text{(adjoint opening velocity)},} \\
  %------------------------------------------------
  \smallskip
  \brV^{\dagger} :=  \changed{-\jump{\dot{\bru}^{\dagger}} - V_n^{\dagger} \brn^-} & \text{(adjoint slip velocity)}, \\
  %------------------------------------------------
  \smallskip
  T^{\dagger}_J :=  n_I C_{IJKL} \dK u^{\dagger}_L & \text{(adjoint traction)}, \\
  
  %------------------------------------------------
  \smallskip
  \brtau^\dagger := \brT^{\dagger\changed{-}} - (\brn^{\changed{-}} \cdot \brT^{\dagger\changed{-}}) \brn^{\changed{-}} & \text{(adjoint shear traction)}, \\
  %------------------------------------------------
    \smallskip
  \changed{\sigma_n^\dagger := - \brn^- \cdot \brT^{\dagger-}} & \changed{\text{(adjoint normal stress)}}. \\
\end{array}
\end{equation}
The negative signs in \changed{$V_n^\dagger$ and $\brV^{\dagger}$ are} for notational convenience. As shortly explained, the adjoint problem will later be phrased in reversed time in which case \changed{$V_n^\dagger$ and $\brV^{\dagger}$ are} defined analogously to \changed{\eqref{eq:opening_velocity}-\eqref{eq:slip_velocity}}. Furthermore, let $\bar{Q}^\dagger(\brx,t)$ be the adjoint source term. It consists of the residuals \eqref{eq:adjusted_residual} acting as point forces at the receiver locations. For a complete description, see Section \ref{sec:misfits}. For now, it is sufficient that $\bar{Q}^\dagger$ satisfies the relation
\begin{equation}\label{eq:adjoint_source_rel}
  \ip{Q_J^\dagger}{\ddd{\dot{u}_J}{p}}_{\Omega\times\T} = \ddd{\F}{p}.
\end{equation}

We now define the adjoint equations to the inverse problem \eqref{eq:inverse_prob_3d} as
\begin{equation}\label{eq:elastic_3d_adj}
  \begin{array}{lll}
  \smallskip
    \rho  \ddot{u}^{\dagger }_J = \dI C_{IJKL} \dK u^{\dagger }_L + Q^{\dagger }_J, & \brx\in \Omega, & t\in \T,\\
    %------------------
    \smallskip
    \bru^{\dagger } = \bar{0}, \quad \dot{\bru}^{\dagger } = \bar{0}, & \brx \in \Omega, & t = T, \\
    %------------------
    \smallskip
    L^\dagger \bru^{\dagger} = 0 , & \brx \in \partial\Omega, & t\in \T, \\
    %------------------
    \smallskip
    \changed{V_n^\dagger = H^\dagger(\bar{V}^\dagger,\Psi^\dagger)}, & \brx \in \Gamma, & t\in \T, \\
    %------------------
    \smallskip
    \changed{\brT^{\dagger+} = -\brT^{\dagger-}}, & \brx \in \Gamma, & t\in \T, \\
    %------------------
    \smallskip
    \displaystyle
    \changed{\brtau^{\dagger} = \brF^{\dagger}(\brV^{\dagger}, \Psi^{\dagger})} , & \brx \in \Gamma, & t\in \T, \\
    %------------------
    \smallskip
    -\dot{\Psi}^{\dagger} = G^{\dagger}(\brV^{\dagger},\Psi^{\dagger}), & \brx \in \Gamma, & t\in \T, \\
    %------------------
    \smallskip
    \Psi^{\dagger} = 0, & \brx \in \Gamma, & t = T.
  \end{array}
\end{equation}
\changed{The functions $F^{\dagger}$, $G^{\dagger}$, and $H^{\dagger}$ are linear in their arguments. They govern the fault dynamics of the adjoint problem and are discussed in more detail below.} The adjoint problem is equipped with terminal conditions (imposed at time $t=T$) and is naturally solved in reversed time. It is common to introduce the change of variables $t^{\dagger} = T - t$. We utilize this when discretizing \eqref{eq:elastic_3d_adj} (see Section \ref{sec:antiplane_shear_time}). Here, for notational convenience, we keep the original time $t$ in the derivation of the adjoint-based gradient. Note that $\pdd{}{t} = - \pdd{}{t^\dagger}$, and therefore
\begin{equation}\label{eq:adjoint_time_der_rel}
     -\dot{\Psi}^{\dagger} = \pdd{\Psi^{\dagger}}{t^\dagger}, \quad %\brV^\dagger = 
     \changed{-\jump{\dot\bru^\dagger} = \pdd{}{t^\dagger}{\jump{\bru^\dagger}}}.
\end{equation}
Notably, the boundary operator is self-adjoint in reversed time, \ie, $L^\dagger = L$, and the adjoint boundary conditions can be formulated as
\begin{equation} \label{eq:bc_general_adj}
  \changed{T}^\dagger_J = \alpha_{JL} \dot{u}^\dagger_L = -\alpha_{JL} \pdd{u^\dagger_L}{t^\dagger},
\end{equation}
(cf.\, \eqref{eq:bc_general_fwd}). As mentioned in Section \ref{sec:elasticity}, \eqref{eq:bc_general_adj} includes \changed{rigid-wall}, \reviewerOne{traction-free}, and characteristic \changed{non-reflecting} conditions. The relations \eqref{eq:adjoint_time_der_rel} - \eqref{eq:bc_general_adj} show that the adjoint problem \eqref{eq:elastic_3d_adj}, when considered in reversed time, is of the same form as the forward problem \eqref{eq:elastic_3d_fwd}.

The adjoint friction law and state evolution equation in \eqref{eq:elastic_3d_adj} are given by
\begin{equation}\label{eq:friction_law_adj}
 \changed{\tau_J^\dagger = }F^{\dagger}_J(\brV^\dagger, \Psi^{\dagger}) \changed{:=} \pdd{F_I}{V_J} V^{\dagger}_I + \pdd{G}{V_J} \Psi^{\dagger}, 
\end{equation}
and
\begin{equation}\label{eq:state_evolution_adj}
  \changed{-\dot{\Psi}^{\dagger} = }G^{\dagger}(\brV^{\dagger},\Psi^{\dagger}) \changed{:=} \pdd{F_J}{\Psi}  V^{\dagger}_J +\pdd{G}{\Psi}\Psi^{\dagger}.
\end{equation}
Note that \eqref{eq:friction_law_adj} - \eqref{eq:state_evolution_adj} resemble the equations of linearized rate-and-state friction. However, the coefficients are time-dependent and are actually functions of the forward variables, \eg $ \pdd{F_I}{V_J} = \pdd{F_I}{V_J}(\bar{V},\Psi, \changed{\sigma_n})$. \changed{Instead of the no opening or interpenetration condition \eqref{eq:no_opening} in the forward problem, the adjoint problem satisfies an equation for nonzero adjoint opening velocity if the forward problem involves normal stress changes, given by
\begin{equation}\label{eq:adjoint_opening}
\begin{aligned}
 V_n^\dagger &= H^\dagger(\bar{V}^\dagger,\Psi^\dagger) \\
 &\changed{:=}\left[\pdd{G}{\dot{\sigma}_n}\pdd{F_J}{\Psi}+\pdd{F_J}{\sigma_n}\right]V_J^\dagger  + \left[\pdd{G}{\sigma_n} + \pdd{G}{\dot{\sigma}_n}\pdd{G}{\Psi} - \frac{\mathrm{d}}{\mathrm{d}t}\left(\pdd{G}{\dot{\sigma}_n}\right)\right] \Psi^{\dagger}. 
 \end{aligned}
\end{equation}
}

With the adjoint problem defined, we are ready to state the first major result of this paper.
\begin{theorem}\label{thm:cont_grad}
  Let $\changed{\brV^{\dagger}}$, $\Psi^{\dagger}$, satisfy \eqref{eq:elastic_3d_adj}. Further, let $p$ be a parameter in $\bar{F}$ or $G$, and let $\Psi_0$ be the initial state. Then, the gradient of the misfit in the inverse problem \eqref{eq:inverse_prob_3d} is given by
  \begin{equation}\label{eq:gradient_adj}
    \begin{aligned}
      \ddd{\F}{p} &= - \ip{V^\dagger_J}{\pdd{F_J}{p}}_{\T} - \ip{\Psi^\dagger}{\pdd{G}{p} }_{\T}, \\
      \ddd{\F}{\Psi_0} &= \Psi_0^\dagger,
    \end{aligned}
  \end{equation}
  where $\Psi^\dagger_{0} = \Psi^\dagger(t = 0)$.
\end{theorem}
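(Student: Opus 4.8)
The plan is to take the adjoint system \eqref{eq:elastic_3d_adj} and the defining property \eqref{eq:adjoint_source_rel} of $\bar Q^\dagger$ as given, and to verify \eqref{eq:gradient_adj} by a space--time reciprocity argument that reduces the misfit sensitivity to integrals over the fault $\Gamma$. First I would differentiate the forward problem \eqref{eq:elastic_3d_fwd} with respect to $p$ to obtain the sensitivities $\ddd{u_J}{p}$ and $\ddd{\Psi}{p}$. Because $\rho$, $C_{IJKL}$, $Q_J$, and the data $\bru_0,\brv_0,\bar g$ are independent of the fault parameters, $\ddd{u_J}{p}$ solves the \emph{homogeneous} linearized elastic wave equation with vanishing initial data and homogeneous exterior boundary data. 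On $\Gamma$ I would record the linearized interface conditions: $\ddd{V_n}{p}=0$ from \eqref{eq:no_opening}; force balance for the traction sensitivities from \eqref{eq:force_balance}; the linearized friction law, which gives $\ddd{\tau_J}{p}=\pdd{F_J}{V_I}\ddd{V_I}{p}+\pdd{F_J}{\Psi}\ddd{\Psi}{p}+\pdd{F_J}{\sigma_n}\ddd{\sigma_n}{p}+\pdd{F_J}{p}$; and the analogous linearization of \eqref{eq:state_evolution_fwd} for $\ddd{\dot\Psi}{p}$. I would treat $p$ as a scalar for the derivation; for a spatially varying $p(\brx)$ the same computation holds pointwise on $\Gamma$, which is why only time integrals $\ip{\cdot}{\cdot}_{\T}$ survive in \eqref{eq:gradient_adj}.

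Second, I would rewrite \eqref{eq:adjoint_source_rel} using a reciprocity identity between $u_J^\dagger$ and the velocity sensitivity $\ddd{\dot u_J}{p}$. Substituting $Q_J^\dagger=\rho\ddot u_J^\dagger-\dI C_{IJKL}\dK u_L^\dagger$ and integrating by parts twice in time and twice in space (the latter using the major symmetry $C_{IJKL}=C_{KLIJ}$), the terminal conditions on $\bru^\dagger$ and the zero initial data on the sensitivity annihilate every temporal boundary term and leave a surface pairing over $\boundary\cup\Gamma$,
\[
 \ddd{\F}{p}= \ip{u_J^\dagger}{\,n_I C_{IJKL}\dK \ddd{\dot u_L}{p}}_{(\boundary\cup\Gamma)\times\T} - \ip{T_J^\dagger}{\ddd{\dot u_J}{p}}_{(\boundary\cup\Gamma)\times\T}.
\]
On $\boundary$, the forward and adjoint boundary conditions \eqref{eq:bc_general_fwd}--\eqref{eq:bc_general_adj} together with the symmetry of $\alpha_{JL}$ turn the integrand into a total time derivative that integrates to zero by the temporal end conditions; this is the self-adjointness in reversed time noted after \eqref{eq:bc_general_adj}. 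Only the fault contribution remains.

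Third, on $\Gamma$ I would combine the two fault sides using the forward and adjoint force balance, reducing the surface pairing to jump quantities, and then integrate by parts once more in time so that $\jump{\dot u_J^\dagger}$ (rather than $\jump{u_J^\dagger}$) is paired with the traction sensitivity. Decomposing tractions into shear and normal parts and velocity jumps into slip and opening via \eqref{eq:opening_velocity}--\eqref{eq:slip_velocity}, using $\ddd{V_n}{p}=0$, the definitions \eqref{eq:adjoint_variables}, and the orthogonality of shear/slip to $\brn^-$, this collapses to
\[
 \ddd{\F}{p}= -\ipf{V_J^\dagger}{\ddd{\tau_J}{p}} + \ipf{V_n^\dagger}{\ddd{\sigma_n}{p}} + \ipf{\tau_J^\dagger}{\ddd{V_J}{p}}.
\]

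Finally, I would substitute the linearized friction law into $\ddd{\tau_J}{p}$ and the adjoint friction law \eqref{eq:friction_law_adj} into $\tau_J^\dagger$; the $\pdd{F}{V}$ contributions cancel, leaving the explicit term $-\ipf{V_J^\dagger}{\pdd{F_J}{p}}$ together with residual terms in $\ddd{\Psi}{p}$, $\ddd{\sigma_n}{p}$, and $\ddd{\dot\sigma_n}{p}$. To remove $\ddd{\Psi}{p}$ I would pair $\Psi^\dagger$ with the linearized state equation, integrate by parts in time, and use \eqref{eq:state_evolution_adj}: the $\pdd{G}{\Psi}$ terms cancel, the $\pdd{G}{V}$ terms cancel against the previous step, the explicit term $-\ipf{\Psi^\dagger}{\pdd{G}{p}}$ appears, and a temporal boundary term $\langle\Psi^\dagger(0),\ddd{\Psi}{p}(0)\rangle_\Gamma$ is produced. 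The hard part is the last step, eliminating the normal-stress sensitivities: I would integrate the $\ddd{\dot\sigma_n}{p}$ term by parts in time, substitute \eqref{eq:state_evolution_adj} for $\dot\Psi^\dagger$, and check that the accumulated coefficient of $\ddd{\sigma_n}{p}$ is precisely $V_n^\dagger-H^\dagger$, which vanishes by the adjoint opening condition \eqref{eq:adjoint_opening}. This is exactly the step that forces the nonstandard form of $H^\dagger$, including the $\frac{\mathrm d}{\mathrm dt}\bigl(\pdd{G}{\dot\sigma_n}\bigr)$ term, and where the time integration by parts must be done carefully. What remains are the two explicit terms, giving the first line of \eqref{eq:gradient_adj} once the boundary term is dropped (the initial state being $p$-independent when $p$ enters $\bar F$ or $G$). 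Taking instead $p=\Psi_0$, the explicit terms vanish while $\ddd{\Psi}{\Psi_0}(0)=1$, so the surviving boundary term delivers $\ddd{\F}{\Psi_0}=\Psi_0^\dagger$.
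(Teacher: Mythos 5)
Your derivation is the tangent-linear/reciprocity dual of the paper's argument: the paper forms a Lagrangian with multipliers $\dot{\bru}^\dagger$ and $\Psi^\dagger$, integrates the constraint terms by parts, and then takes the first variation, whereas you first differentiate the forward problem to obtain sensitivity fields and then pair them with the adjoint solution through \eqref{eq:adjoint_source_rel}. The two routes perform the same integrations by parts and the same cancellations, and your outline contains every key mechanism of the paper's proof: the exterior boundary contribution vanishing via the symmetry of $\alpha_{JL}$ (your ``total time derivative'' phrasing is a correct equivalent of the paper's direct cancellation), the reduction of the fault terms via the two force balances and the shear/normal decomposition with $V_n=0$ (your intermediate identity $-\ipf{V_J^\dagger}{\ddd{\tau_J}{p}} + \ipf{V_n^\dagger}{\ddd{\sigma_n}{p}} + \ipf{\tau_J^\dagger}{\ddd{V_J}{p}}$ is exactly the variational form of the paper's fault boundary terms), the $\pdd{F}{V}$, $\pdd{G}{V}$, $\pdd{G}{\Psi}$ cancellations against \eqref{eq:friction_law_adj}--\eqref{eq:state_evolution_adj}, and, most importantly, the time integration by parts of the $\ddd{\dot{\sigma}_n}{p}$ term whose accumulated coefficient is $V_n^\dagger - H^\dagger = 0$ by \eqref{eq:adjoint_opening}. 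What the sensitivity route buys is that it never needs the Lagrange-multiplier argument ($\L=\F$ on the constraint set); what the Lagrangian route buys is that it avoids introducing sensitivity fields altogether.

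There is, however, one genuine sign problem, and it sits exactly at the $\Psi_0$ term. With the terminal condition $\Psi^\dagger(T)=0$, integration by parts gives
\begin{equation*}
\ipf{\Psi^\dagger}{\ddd{\dot{\Psi}}{p}} = -\ip{\Psi_0^\dagger}{\ddd{\Psi}{p}(0)}_\Gamma - \ipf{\dot{\Psi}^\dagger}{\ddd{\Psi}{p}},
\end{equation*}
i.e., the $t=0$ boundary term is produced with a minus sign, not the plus sign you assert. This matters because the relative sign between this boundary term and the explicit terms $-\ipf{V_J^\dagger}{\pdd{F_J}{p}}-\ipf{\Psi^\dagger}{\pdd{G}{p}}$ is forced: the linearized state equation must be paired with $\Psi^\dagger$ in the orientation that makes the $\pdd{G}{\Psi}$ and $\pdd{G}{V}$ terms cancel (the opposite orientation doubles them instead of cancelling), and with that orientation the steps you describe deliver $\ddd{\F}{\Psi_0}=-\Psi_0^\dagger$, not $+\Psi_0^\dagger$. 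For $p$ in $\bar{F}$ or $G$ the boundary term vanishes, so the first line of \eqref{eq:gradient_adj} is unaffected. I note that the paper's own proof writes the corresponding identity as $\ipf{\Psi^{\dagger}}{\dot{\Psi} - G} = -\ipf{\dot{\Psi}^{\dagger}}{\Psi} - \ipf{\Psi^{\dagger}}{G} + \ip{\Psi^{\dagger}_0}{\Psi_0}_{\Gamma}$, whose plus sign likewise conflicts with the endpoint evaluation under $\Psi^\dagger(T)=0$; so your conclusion agrees with the printed statement, but it does not follow from your otherwise correctly signed intermediate steps, and the relative sign between the two lines of \eqref{eq:gradient_adj} needs to be checked rather than asserted.
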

\begin{proof}
  See section \ref{sec:gradient_derivation}.
\end{proof}
%The parameter $p$ in Theorem \ref{thm:cont_grad} can be any parameter that appears in the rate-and-state friction law $\bar{F}$ and/or state evolution $G$, including rate-state parameters and initial stresses.

\subsection{Derivation of the adjoint-based gradient}\label{sec:gradient_derivation}
To prove Theorem \ref{thm:cont_grad}, we formulate the Lagrangian cost functional
\begin{equation}\label{eq:lagrangian}
\begin{aligned}
  \L = \F &+ \ipo{\dot{u}^{\dagger}_J}{\rho \ddot{u}_J - \dI C_{IJKL} \dK u_L - Q_J} \\
          &+ \ipf{\Psi^{\dagger}}{\dot{\Psi} - G},
\end{aligned}
\end{equation}
\reviewerOne{where $\bar{u}^\dagger$ and $\Psi^\dagger$ are solutions to \eqref{eq:elastic_3d_adj}. Note that for any $\bru$ and $\Psi$ satisfying \eqref{eq:elastic_3d_fwd}, $\L = \F$ and further $\ddd{\L}{p} = \ddd{\F}{p}$ by the Lagrange multiplier theorem \cite{troltzsch}. %We refrain from a mathematically rigorous formulation of the Lagrange method for PDE-constrained optimization, and instead refer interested readers to \cite{troltzsch} for further details.
Also note that we are using adjoint velocity $\dot{\bar{u}}^\dagger$ as a Lagrange multiplier when forming the cost functional. This choice is explained in a remark at the end of this section.} Moreover, since initial- and boundary data in \eqref{eq:elastic_3d_fwd} are independent of $p$ we may set $\bru_0 = 0$, $\brv_0 = 0$ and $\bar{g} = 0$ for the remaining part of the derivation.

Before considering the first variation $\delta\L$, we rewrite the cost functional by performing partial integrations to shift derivatives from $\bru$ to $\bru^\dagger$. Integrating by parts in time yields
\begin{equation} \label{eq:cost_kinetic_ibp}
  \ipo{\dot{u}^\dagger_J}{\rho \ddot{u}_J} = - \ipo{\rho \ddot{u}^\dagger_J}{\dot{u}_J},
\end{equation}
where terms at $t=0$ and $t=T$ vanish due to the initial and terminal conditions $\dot{\bru} = 0$ and $ \dot{\bru}^\dagger = 0$ in \eqref{eq:elastic_3d_fwd} and \eqref{eq:elastic_3d_adj}. Next, consider the spatial elastic operator and integrate by parts in space to obtain
\begin{equation}
\begin{aligned}
  - \ipo{\dot{u}^\dagger_J}{\dI C_{IJKL} \dK u_L} = &-\ipdo{\dot{u}^\dagger_J}{\changed{T}_J} + \ipdo{\dot{\changed{T}}^\dagger_J}{u_J} \\
  &-\ipf{\dot{u}^{\dagger +}_J}{\changed{T}^+_J} + \ipf{\dot{\changed{T}}^{\dagger+}_J}{u^+_J} \\
  &-\ipf{\dot{u}^{\dagger -}_J}{\changed{T}^-_J} + \ipf{\dot{\changed{T}}^{\dagger-}_J}{u^-_J} \\
  &- \ipo{\dI C_{IJKL} \dK \dot{u}^\dagger_L}{u_J},
\end{aligned}
\end{equation}
where we used the major symmetry of the stiffness tensor: $C_{IJKL} = C_{KLIJ}$. Integrating by parts in time leads to
\begin{equation}
\begin{aligned}
  \ipdo{\dot{\changed{T}}^\dagger_J}{u_J} &= - \ipdo{\changed{T}^\dagger_J}{\dot{u}_J} \\
  \ipf{\dot{\changed{T}}^{\dagger \pm}_J}{u^\pm_J} &= - \ipf{\changed{T}^{\dagger \pm}_J}{\dot{u}^\pm_J} \\
  % \ipf{\dot{\tau}^{\dagger-}_J}{u^-_J} &= \ip{\tau^{\dagger-}_J}{u^-_J}_{\Gamma} \big|_{t=0}^{t=T} - \ipf{\tau^{\dagger-}_J}{\dot{u}^-_J} \\
\end{aligned}
\end{equation}
and
\begin{equation}
\begin{aligned}
  - \ipo{\dI C_{IJKL} \dK \dot{u}^\dagger_L}{u_J} = \ipo{\dI C_{IJKL} \dK u^\dagger_L}{\dot{u}_J} ,
\end{aligned}
\end{equation}
where, again, terms at the initial and final times vanish due to the initial and terminal conditions $\bru = 0$, $\bru^\dagger = 0$.
We have now derived
\begin{equation}
\begin{aligned}
  \ipo{\dot{u}^\dagger_J}{\rho \ddot{u}_J - \dI C_{IJKL} \dK u_L} = &- \ipo{\rho \ddot{u}^\dagger_J - \dI C_{IJKL} \dK u^\dagger_L}{\dot{u}_J} \\
  &\underbrace{- \ipdo{\dot{u}^\dagger_J}{\changed{T}_J} - \ipdo{\changed{T}^\dagger_J}{\dot{u}_J}}_{BT_{ext}} \\
  &\underbrace{- \ipf{\dot{u}^{\dagger+}_J}{\changed{T}^+_J} - \ipf{\changed{T}^{\dagger+}_J}{\dot{u}^+_J}}_{BT_{fault}^+} \\
  &\underbrace{- \ipf{\dot{u}^{\dagger-}_J}{\changed{T}^-_J} - \ipf{\changed{T}^{\dagger-}_J}{\dot{u}^-_J}}_{BT_{fault}^-}
  .
\end{aligned}
\end{equation}
To simplify this expression, we first note that the exterior boundary terms $BT_{ext}$ vanish. To see this, we use that the forward and adjoint boundary conditions satisfy \eqref{eq:bc_general_fwd} and \eqref{eq:bc_general_adj}, and $\alpha_{JL} = \alpha_{LJ}$, such that
\begin{equation}
  \begin{aligned}
  BT_{ext} &= \ipdo{\dot{u}^\dagger_J}{\alpha_{JL} \dot{u}_L} - \ipdo{\changed{T}^\dagger_J}{\dot{u}_J}\\
  &= \ipdo{\alpha_{JL} \dot{u}^\dagger_L - \changed{T}^{\dagger}_J}{\dot{u}_J} = 0.
  \end{aligned}
\end{equation}
This naturally holds for self-adjoint boundary operators $L = L^\dagger$. Second, for the fault terms $BT_{fault}^{\pm}$ , \changed{we use the forward and adjoint force balances, $\brT^+ = -\brT^-$ and $\brT^{\dagger+} = -\brT^{\dagger-}$, to obtain
\begin{equation}
BT_{fault}^+ + BT_{fault}^- = \ipf{\jump{\dot{u}_J^\dagger}}{T_J^-} + \ipf{T_J^{\dagger-}}{\jump{\dot{u}_J}}. 
\end{equation}
Next, we decompose tractions and jumps in particle velocity into fault-parallel and fault-normal components:
% \begin{align}
%     \jump{\dot{u}_J} &= V_J + V_n n_J^-, \\
%     T_J^- &= \tau_J^- - \sigma_n n_J^-, \\
%     -\jump{\dot{u}_J^\dagger} &= V_J^\dagger + V_n^\dagger n_J^-, \\
%     T_J^{\dagger-} &= \tau_J^{\dagger-} - \sigma_n^\dagger n_J^-.
% \end{align}
\begin{equation}
\begin{array}{rc}
  \medskip
  \jump{\dot{u}_J} = V_J + V_n n_J^-, &
    T_J^- = \tau_J - \sigma_n n_J^-, \\
    -\jump{\dot{u}_J^\dagger} = V_J^\dagger + V_n^\dagger n_J^-, &
    T_J^{\dagger-} = \tau_J^{\dagger} - \sigma_n^\dagger n_J^-.
\end{array}
\end{equation}
Using the decomposition, together with the forward and adjoint friction laws $\tau_J = F_J$ and $\tau_J^{\dagger} = F_J^\dagger$ and the no opening condition $V_n = 0$, allows us to write
\begin{equation}
BT_{fault}^+ + BT_{fault}^- = -\ipf{V_J^\dagger}{F_J} + \ipf{F_J^{\dagger}}{V_J} + \ipf{V_n^\dagger}{\sigma_n}.
\end{equation}
}

The last term in the Lagrangian cost functional \changed{\eqref{eq:lagrangian} containing the state evolution equation} can be rewritten, using integration by parts in time,
\begin{equation}
  \ipf{\Psi^{\dagger}}{\dot{\Psi} - G} = -\ipf{\dot{\Psi}^{\dagger}}{\Psi} - \ipf{\Psi^{\dagger}}{G} + \ip{\Psi^{\dagger}_0}{\Psi_0}_{\Gamma},  
\end{equation}
where $\Psi^{\dagger}_0, \Psi_0$ are the state variables at $t=0$. The corresponding term at $t=T$ vanishes due to the terminal condition $\Psi(t=T) = 0$ in \eqref{eq:elastic_3d_adj}.
We have derived
\begin{equation}
\begin{aligned}
  \L = \F &- \ipo{\rho \ddot{u}^\dagger_J - \dI C_{IJKL} \dK u^\dagger_L}{\dot{u}_J} -\ipo{\dot{u}^\dagger_J}{Q_J} \\
  %---------------------------------------
  &-\ipf{V^{\dagger}_J}{F_J} + \changed{\ipf{F^{\dagger}_J}{V_J}} \changed{+\ipf{V_n^\dagger}{\sigma_n}} \\
  %---------------------------------------
  &-\ipf{\dot{\Psi}^{\dagger}}{\Psi} - \ipf{\Psi^{\dagger}}{G} + \ip{\Psi^{\dagger}_0}{\Psi_0}_{\Gamma}.
\end{aligned}
\end{equation}

We are now ready to derive the first variation of $\L$. First, $\delta Q_J = 0$ since external forces do not depend on frictional parameters or initial stresses. Second, by the chain rule,
\begin{equation}
  \delta F_J = \pdd{F_J}{V_I} \delta V_I + \pdd{F_J}{\Psi} \delta \Psi + \changed{\pdd{F_J}{\sigma_n} \delta \sigma_n +} \pdd{F_J}{p} \delta p,
\end{equation}
\begin{equation}
  \delta G = \pdd{G}{V_I} \delta V_I + \pdd{G}{\Psi} \delta \Psi + \changed{\pdd{G}{\sigma_n} \delta \sigma_n + \pdd{G}{\dot{\sigma}_n} \delta \dot{\sigma}_n +} \pdd{G}{p} \delta p.
\end{equation}
The above considerations together with the relation \eqref{eq:adjoint_source_rel} lead to
\begin{equation}\label{eq:lagrangian_variation_unsimplified}
  \begin{aligned}
  \delta \L = &- \ipo{\rho \ddot{u}^\dagger_J - \dI C_{IJKL} \dK u^\dagger_L - Q_J^\dagger}{\delta \dot{u}_J} \\
  %---------------------------------------
  &-\ipf{V^{\dagger}_J}{\pdd{F_J}{V_I} \delta V_I + \pdd{F_J}{\Psi} \delta \Psi + \changed{\pdd{F_J}{\sigma_n} \delta \sigma_n +} \pdd{F_J}{p}\delta p } \\
  %-------------------------
  &+ \ipf{F^{\dagger}_J }{\delta V_J} \changed{+\ipf{V_n^\dagger}{\delta \sigma_n}}  \\
  %---------------------------------------
  & -\ipf{\dot{\Psi}^{\dagger}}{\delta \Psi} - \ipf{\Psi^{\dagger}}{\pdd{G}{V_I} \delta V_I + \pdd{G}{\Psi} \delta \Psi + \changed{\pdd{G}{\sigma_n} \delta \sigma_n + \pdd{G}{\dot{\sigma}_n} \delta \dot{\sigma}_n +} \pdd{G}{p} \delta p} \\
  %---------------------------------------
  &+ \ip{\Psi^{\dagger}_0}{\delta \Psi_0}_{\Gamma}.
  %---------------------------------------
  \end{aligned}
\end{equation}
\changed{
For the term involving $\delta \dot{\sigma}_n$, integrating by parts in time yields
\begin{equation}
- \ipf{\Psi^{\dagger}}{\pdd{G}{\dot{\sigma}_n} \delta \dot{\sigma}_n} = \ipf{\pdd{G}{\dot{\sigma}_n}\dot{\Psi}^{\dagger} +  \frac{\mathrm{d}}{\mathrm{d}t}\left(\pdd{G}{\dot{\sigma}_n}\right)\Psi^{\dagger} }{ \delta \sigma_n},
\end{equation}
where we used that $\Psi^\dagger(t=T)=0$ and $\delta \sigma_n(t=0) = 0$ due to the terminal and initial conditions. Using the adjoint state evolution equation \eqref{eq:state_evolution_adj} to substitute $\dot{\Psi}^\dagger$ yields
\begin{equation}
\begin{aligned}
&-\ipf{\Psi^{\dagger}}{\pdd{G}{\dot{\sigma}_n} \delta \dot{\sigma}_n} = \\
& -\ipf{\pdd{G}{\dot{\sigma}_n}\pdd{F_J}{\Psi}V_J^\dagger  + \left[\pdd{G}{\dot{\sigma}_n}\pdd{G}{\Psi} - \frac{\mathrm{d}}{\mathrm{d}t}\left(\pdd{G}{\dot{\sigma}_n}\right)\right]\Psi^{\dagger}}{ \delta \sigma_n} .
\end{aligned}
\end{equation}
}

The volume term \changed{in \eqref{eq:lagrangian_variation_unsimplified}} vanishes due to the adjoint momentum balance in \eqref{eq:elastic_3d_adj}. Gathering the remaining terms leads to
\begin{equation}
  \begin{aligned}
  \delta \L &= \ipf{F^{\dagger}_J - \pdd{F_I}{V_J} V^{\dagger}_I - \pdd{G}{V_J} \Psi^{\dagger} }{ \delta V_J} \\
  %---------------------------
  &- \ipf{\dot{\Psi}^{\dagger} + \pdd{F_J}{\Psi}  V^{\dagger}_J +\pdd{G}{\Psi}\Psi^{\dagger} }{\delta \Psi} \\
  %---------------------------------------
  & \changed{ + \ipf{V_n^\dagger - \left[\pdd{G}{\dot{\sigma}_n}\pdd{F_J}{\Psi}+\pdd{F_J}{\sigma_n}\right]V_J^\dagger  - \left[\pdd{G}{\sigma_n} + \pdd{G}{\dot{\sigma}_n}\pdd{G}{\Psi} - \frac{\mathrm{d}}{\mathrm{d}t}\left(\pdd{G}{\dot{\sigma}_n}\right)\right] \Psi^{\dagger}}{\delta \sigma_n}} \\
  %---------------------------------------
  &- \ipf{\Psi^{\dagger}}{\pdd{G}{p} \delta p} -\ipf{V^{\dagger}_J}{\pdd{F_J}{p} \delta p }\\
  %---------------------------------------
  &+ \ip{\Psi^{\dagger}_0}{\delta \Psi_0}_{\Gamma} .
  %---------------------------------------
  \end{aligned}
\end{equation}
The first two terms vanish due to the adjoint friction law \eqref{eq:friction_law_adj} and state evolution equation \eqref{eq:state_evolution_adj}, \changed{while the third term vanishes due to the adjoint opening equation \eqref{eq:adjoint_opening}}. We have arrived at
\begin{equation}\label{eq:gradient_var}
  \delta \L = - \ipf{\Psi^{\dagger}}{\pdd{G}{p} \delta p} -\ipf{V^{\dagger}_J}{\pdd{F_J}{p} \delta p } + \ip{\Psi^{\dagger}_0}{\delta \Psi_0}_{\Gamma}.
\end{equation}
Since $\ddd{\F}{p} = \ddd{\L}{p}$ and $\ddd{}{p}\ipf{v}{w\delta p} = \ip{v}{w}_\T$, \eqref{eq:gradient_adj} follows. This proves Theorem \ref{thm:cont_grad}.

\begin{remark}
  Note that in the Lagrangian cost functional \eqref{eq:lagrangian} we have taken the inner product with adjoint particle velocity $\dot{\bru}^\dagger$, rather than particle displacement $\bru^\dagger$. This choice significantly simplifies the adjoint friction and state evolution equations. If one instead were to use adjoint displacement, the equations would, in addition to the current terms, depend also on forward particle accelerations, adjoint slip and adjoint state rate.
\end{remark}
\changed{
\begin{remark}
 Interestingly, the adjoint rate-and-state equations and the adjoint-based gradient do not depend on the adjoint normal stress $\sigma_n^\dagger$. This is due to the no opening condition \eqref{eq:no_opening}. If the fault in the forward problem instead satisfied an opening condition $V_n = H$, with $H$ a function of a forward variable and/or a frictional parameter, then $\sigma_n^\dagger$ would appear in the adjoint rate-and-state equations and/or the gradient.
\end{remark}
}

\subsection{Misfits and adjoint sources}\label{sec:misfits}
This section details how misfits of particle displacement and velocity enter the adjoint problem \eqref{eq:elastic_3d_adj}, \ie, how the different types of misfits translate to the adjoint source term $\bar{Q}^\dagger$.

Consider the adjusted residuals $\bar{R}^{(k)}$ in \eqref{eq:adjusted_residual}. We will refer to the linear operators $W^{(k)}$ as filters, although they could equally well be windowing operators. It is understood that the $W^{(k)}$ may be different for each component of $\bar{r}^{(k)}$, but to simplify the presentation we avoid explicit notation for this. Furthermore, we first consider a single receiver \changed{at $\brx = \brx_r$} and will temporarily drop the \changed{$(k)$} superscript. Since $W$ is linear, the first variation of $R_J$ can be expressed as
\begin{align}
  \delta R_J &= W[\delta r_J] = \changed{ \int \limits_{\Omega} W[\delta m_J] \hat{\delta}_{\brx_r} \, \mathrm{d} \brx },
\end{align}
where $\changed{\hat{\delta}_{\brx_r}(\brx)} := \hat{\delta}(\brx - \changed{\brx_r)}$ is the shifted Dirac delta function.
Let $W^{\dagger}$ denote the Hilbert adjoint of $W$ with respect to the $L^2$-inner product, such that
\begin{equation}
  \int \limits_{0}^{T} q(t) W[s](t) \, \mathrm{d} t = \int \limits_{0}^{T} W^\dagger[q](t) s(t) \, \mathrm{d} t \quad \forall q, s \in L^2(\T).
\end{equation}
Note that windowing operators of the form $W[r](t) = w(t)r(t)$, for some function $w$, are self-adjoint ($W^\dagger = W$), while a general filter might not be self-adjoint.

Now consider the first variation $\delta F$ of the misfit functional \eqref{eq:misfit} with a single receiver. Carrying out the chain rule for the first variation yields
\begin{equation}\label{eq:adjoint_source_rel_derivation}
  \begin{aligned}
    \delta \F &= \changed{\int \limits_{0}^{T} R_J\delta R_J \, \mathrm{d}t} = \int \limits_{0}^{T} \int \limits_{\Omega} R_J W[\delta m_J] \changed{\hat{\delta}_{\brx_r}} \, \mathrm{d} \brx \, \mathrm{d}t \\
    %-------------------------------------
    &= \int \limits_{0}^{T} \int \limits_{\Omega} W^{\dagger} [ R_J ]\changed{\hat{\delta}_{\brx_r}} \delta m_J \, \mathrm{d} \brx \, \mathrm{d}t = \ipo{W^{\dagger} [ R_J ]\changed{\hat{\delta}_{\brx_r}} }{\delta m_J} .
  \end{aligned}
\end{equation}
For a velocity misfit where $\bar{m} = \dot{\bru}$, we note that $Q_J^\dagger = W^{\dagger} [ R_J ]\changed{\hat{\delta}_{\brx_r}} $ satisfies \eqref{eq:adjoint_source_rel}, which was used to derive the adjoint-based gradient. In the case of a displacement misfit, \ie, $\bar{m} = \bru$, introduce the time-integrated quantity
\begin{equation}\label{eq:adjoint_source_displacement_integral}
  \hat{R}_J(t) = \int \limits_0^t W^{\dagger}[R_J](t') \, \mathrm{d}t' + \hat{R}_{J,0},
\end{equation}
where the constant $\hat{R}_{J,0}$ is selected such that $\hat{R}_J(T) = 0$. By \eqref{eq:adjoint_source_displacement_integral}, $\hat{R}_J$ satisfies the ODE 
\begin{equation}\label{eq:displacement_misfit_ode}
    \dot{\hat{R}}_J = W^{\dagger}[R_J].    
\end{equation}
Using integration by parts in time, it therefore follows that
\begin{equation}
  \delta \F = \ipo{W^{\dagger} [ R_J ]\changed{\hat{\delta}_{\brx_r}} }{\delta u_J} = -\ipo{\hat{R}_J \changed{\hat{\delta}_{\brx_r}} }{\delta \dot{u}_J}, 
\end{equation}
where the term at $t=0$ vanishes because of the initial condition $u_J = 0$ (or in general the $p$-independence of the initial data) and the term at $t=T$ vanishes because $\hat{R}_J(T) = 0$. Considering multiple receivers again, the adjoint source function at receiver $k$ is thus
\begin{equation}
  S_J^{(k)}(t) = 
  \begin{cases}
    -\hat{R}_J^{(k)}(t)& m_J = u_J \quad \text{(displacement)}, \\
    W^{(k)\dagger} [ R^{(k)}_J](t)& m_J = \dot{u}_J \quad \text{(velocity)}.
  \end{cases}
\end{equation}
Note that both types of signals involve the application of an adjoint filter operator. The complete adjoint source term is obtained by summing over all receivers,
\begin{equation}
  Q_J^\dagger(\brx, t) = \sum_{k=1}^{N_{rec}} S_J^{(k)}(t) \changed{\hat{\delta}(\brx - \brx_r^{(k)}) }.
\end{equation}
In this work, the misfit functionals are considered to measure either displacement or velocity exclusively. It is straightforward to extend the derivation to a linear combination of displacement and velocity misfits. We briefly comment on extending the misfit to other types of measurements. Measurements to consider include strain rate from fiber optics cables, pressure from hydrophones, or snapshot measurements of the displacement of Earth's surface through InSAR. If the resulting misfit can be cast in the form of \eqref{eq:adjoint_source_rel}, the expression for the adjoint-based gradient in Theorem \ref{thm:cont_grad} follows directly. The difference will only be in the adjoint source term. In the case of an InSAR measurement, we anticipate that the adjoint source term consists of a surface forcing in space, multiplied by a Heaviside function in time activating at the snapshot time.

\subsection{Well-posedness} \label{sec:wellposed}
\reviewerOne{For Theorem \ref{thm:cont_grad} to be valid, both the forward and adjoint problems need to be well-posed. A problem is well-posed if a solution a) exists, b) is unique, and c) depends continuously on boundary and initial data (also referred to as stability) \cite{gustafsson_kreiss_oliger}. Well-posedness results for linear elastodynamics with rate-and-state friction are incomplete, and we provide a brief review of known results below. For the adjoint problem, well-posedness is tied to that of the linearized forward problem, \ie, the adjoint problem is well-posed if and only if the linearized forward problem is \cite{buithanh2023adjoint}.

For linear initial-boundary value problems, conditions for well-posedness may be established using the energy method, in which an energy estimate for the problem is derived. By prescribing appropriate boundary and interface conditions, the energy estimate shows that the solution is bounded in terms of data, ensuring stability \cite{gustafsson_kreiss_oliger}. Uniqueness can then be established by applying the the energy method to the difference of two solutions. Existence further requires that a minimal number of initial, boundary, and interface conditions are specified \cite{nordstrom_2013}. For the forward problem \eqref{eq:elastic_3d_fwd}, where non-linearity is limited to the friction law, the energy method leads to conditions on the friction law, required for stability and uniqueness, as discussed below.

Energy estimates for problems with general friction laws were established decades ago \cite{kostrov_1974,rudnicki_freund_1981}. Dissipation (rather than creation) of mechanical energy during slip is guaranteed when the frictional shear traction is coplanar with and opposite to slip velocity and the normal stress remains compressive. In \cite{kozdon_et_al_2012}, the energy method was applied to 2D antiplane shear problems with purely velocity-dependent nonlinear friction and no prestress. For scalar friction laws of the form $\tau = F(V)$, the stability condition of frictional energy dissipation reads $F(V)V \ge 0$. Additionally, the authors showed that a sufficient condition for uniqueness is $\frac{\mathrm{d}F}{\mathrm{d}V} \ge 0$. In \cite{kozdon_et_al_2010}, the results were extended to full 3D elastodynamics assuming that fault normal stress remains compressive. Here, the stability condition reads $F_JV_J \ge 0$ and the sufficient uniqueness condition is that the Jacobian of the friction law is symmetric-positive definite, \ie, $\pdd{F_I}{V_J}=\pdd{F_J}{V_I}$, and $V_I\pdd{F_J}{V_I}V_J \ge 0$. Moreover, existence was shown using the method of characteristics, thereby proving well-posedness for this particular class of problems with purely velocity-dependent friction and no prestress. 

In \cite{duru_et_al_2019}, the analysis of 2D antiplane shear problems was extended to rate-and-state friction with prestress. In addition to the previously mentioned constraints on $F$, it was further established that well-posedness requires that the state evolution law $G(V,\Psi)$ is such that $\pdd{G}{\Psi}$ is bounded and such evolution laws were termed admissible\footnote{It suffices that $\pdd{G}{\Psi}$ is bounded almost everywhere, \ie, $G(V,\Psi)$ is Lipschitz continuous in $\Psi$.}. Well-posedness results for general 3D elastodynamics with rate-and-state friction are to the best of our knowledge currently lacking.
 
%As mentioned above, well-posedness of the adjoint problem follows from well-posedness of the linearized forward problem \cite{buithanh2023adjoint}. In this setting, the linearized forward problem is given by \eqref{eq:elastic_3d_fwd} with the friction law \eqref{eq:friction_law_fwd} and state evolution equation \eqref{eq:state_evolution_fwd} linearized about the forward solution.
There have been many studies of linearized rate-and-state friction, as summarized by \cite{rice_lapusta_2001}. With a few exceptions \cite{ray2017,viesca2023}, these studies have been limited to linearization about a state of steady sliding on a planar fault with constant frictional parameters (making this a constant coefficient problem). Sliding occurs between between two homogeneous elastic half-spaces though some studies have considered other (e.g., layered) geometries \cite{ranjith2009,ranjith2014,aldam2016}. Of particular note are so-called bimaterial problems, where sliding occurs between dissimilar elastic solids. Spatially nonuniform sliding alters the fault normal stress and therefore frictional strength. This feedback can destabilize steady sliding. Compromises to the full rate-and-state formulation, such as removing the direct effect so that the friction coefficient satisfies $\pdd{f}{|V|}=0$, are known to render the problem ill-posed \cite{ranjith2001,rice_lapusta_2001}.

To summarize, the adjoint problem \eqref{eq:elastic_3d_adj} is well-posed if and only if the forward problem \eqref{eq:elastic_3d_fwd} with linearized rate-and-state friction is well-posed. The latter is determined by the stability of the linearized rate-and-state friction laws, for which well-posedness results are currently incomplete. Studying the stability of linearized rate-and-state friction is thus of interest also for adjoint-based optimization. Finally, we note that the 2D antiplane shear} numerical studies presented herein are performed using the typical rate-and-state friction coefficient \eqref{eq:friction_coeff} together with the slip law \eqref{eq:G_antiplane_shear}. No blow-ups of the adjoint solution have been observed, and the results therefore indicate that the adjoint equations are well-posed, at least in this setting.

\section{Dynamic rupture in antiplane shear}\label{sec:antiplane_shear}
We now proceed to demonstrate the method applied to dynamic rupture simulations in \reviewerTwo{2D} antiplane shear. Consider the domain $\Omega$ with a frictional fault along $\Gamma$, as illustrated in Figure \ref{fig:fractal_fault_domain}. In the semi-discrete setting, the discretizations will be carried out blockwise in $\Omega_\pm$. For this reason, quantities on the respective domain will be denoted with the subscripts $+$ and $-$, or compactly when the equations apply to both domains individually by $\pm$. \changed{Note that this use of $+$ and $-$ differs slightly from the continuous setting in Sections \ref{sec:elasticity} and \ref{sec:adjoint_eqs} where it was used only when referring to quantities on the fault; here it is used for both the entire volumes including boundaries and the fault. The notation should be clear from context}. Letting $(x_1, x_2)=(x,y) \in \Omega$ be the in-plane coordinates, \reviewerTwo{the only nonzero component of particle displacement, $u = u_z$, is in the antiplane direction $x_3=z$, with all fields being functions only of $x$ and $y$. Slip, slip velocity, and shear traction are scalars, and the fault normal stress is unaltered by slip. In this setting, the governing equations \eqref{eq:elastic_3d_fwd} reduce to}
\begin{equation}\label{eq:anti_plane_shear_fwd}
  \begin{array}{lll}
    \rho_\pm  \ddot{u}_\pm = \dI \mu_\pm \dI u_\pm, & \brx\in \Omega_\pm, & t\in \T,\\
    u_\pm= u_{0\pm}, \quad \dot{u}_\pm= v_{0\pm}, & \brx \in \Omega_\pm, & t = 0, \\
    \tau_\pm + Z\dot{u}_\pm = 0 , & \brx \in \partial\Omega_\pm \setminus \Gamma, & t\in \T, \\
    \tau_\pm = \mp F(V,\Psi), & \brx \in \Gamma, & t\in \T, \\
    \dot{\Psi} = G(V,\Psi), & \brx \in \Gamma, & t\in \T, \\
    \Psi = \Psi_0, & \brx \in \Gamma, & t = 0.
  \end{array}
\end{equation}
\changed{Here, we use $\tau_\pm = \mu_\pm \partial_{n_\pm} u_\pm$ to denote shear traction on the boundaries of $\Omega_\pm$ as well as the two sides of the fault, while in Section \ref{sec:elasticity} $\brtau$ was only defined for the $-$ side of the fault. The equation $\tau_\pm = \mp F(V,\Psi)$ therefore specifies both the friction law and force balance on the fault.} On the exterior boundaries $\partial\Omega_\pm \setminus \Gamma$ characteristic \changed{non-reflecting} conditions \cite{engquist_majda_77} are imposed. On the fault, we consider a typical rate-and-state friction coefficient \cite{rice_lapusta_2001} 
\begin{equation}\label{eq:friction_coeff}
  f(|V|,\Psi) = a \sinh^{-1}\left(\frac{|V|}{2V_0}e^{\frac{\Psi}{a}}\right),
\end{equation}
where $a$ is the dimensionless direct effect parameter and $V_0$ an arbitrarily chosen reference velocity. \reviewerTwo{The direct effect parameter is positive ($a>0$), which ensures that the instantaneous response of the fault to perturbations is of velocity-strengthening character ($\partial F / \partial V > 0$). This is well supported by experiments and also expected on a theoretical basis \cite{rice_lapusta_2001}. Furthermore, some studies referenced in Section \ref{sec:wellposed} suggest it to be a requirement for well-posedness.} In addition, an external loading $\tau_L(\brx)$, $\brx \in \Gamma$ is added to the fault \changed{to initiate the rupture at a pre-specified location}, such that the friction law reads
\begin{equation}\label{eq:F_antiplane_shear}
    F(V,\Psi) = \sigma_n^{\changed{0}} f(|V|,\Psi)\frac{V}{|V|} - \changed{\tau^0} - \tau_L.
\end{equation}
The external loading could of course be included directly in $\changed{\tau^0}$, but is kept separate here since we later will perform inversions for $\changed{\tau^0}$ with a known $\tau_L$. State evolution is governed by the slip law \cite{marone_1998,lapusta_et_al_2000}
\begin{equation}\label{eq:G_antiplane_shear}
    G(V, \Psi) = -\frac{|V|}{D_c}\left(f(|V|,\Psi) - f_{ss}(|V|)\right),
\end{equation}
where $f_{ss}(|V|) =  f_0 + (a-b)\ln \left(|V| / V_0 \right)$ is the steady state friction coefficient \cite{rice_lapusta_2001}. Here $b$ is the state evolution parameter, $D_c$ the state evolution distance and $f_0$ is the reference coefficient for steady sliding at velocity $V_0$. \reviewerTwo{The fault is velocity-strengthening where $a-b > 0$ and velocity-weakening where $a-b<0$; the latter is required for unstable slip and sustained rupture propagation \cite{marone_1998}.}

\reviewerOne{In the following sections, we introduce a dual consistent \cite{hicken_zingg_2014} space-time discretization based on SBP difference operators  combined with weakly enforced boundary and interface conditions through characteristics-based SAT and explicit Runge--Kutta time integration. Based on this discretization we present a discrete counterpart to Theorem \ref{thm:cont_grad} for dynamic rupture in antiplane shear. While the discretization of the forward problem was developed previously in \cite{duru_et_al_2019,erickson_et_al_2022,harvey_et_al_2023,stiernstrom_et_al_2023}, its space-time dual consistency is to the best of our knowledge a novel result.}
\begin{figure}[h!]
  \centering
  \includegraphics[width=0.5\linewidth]{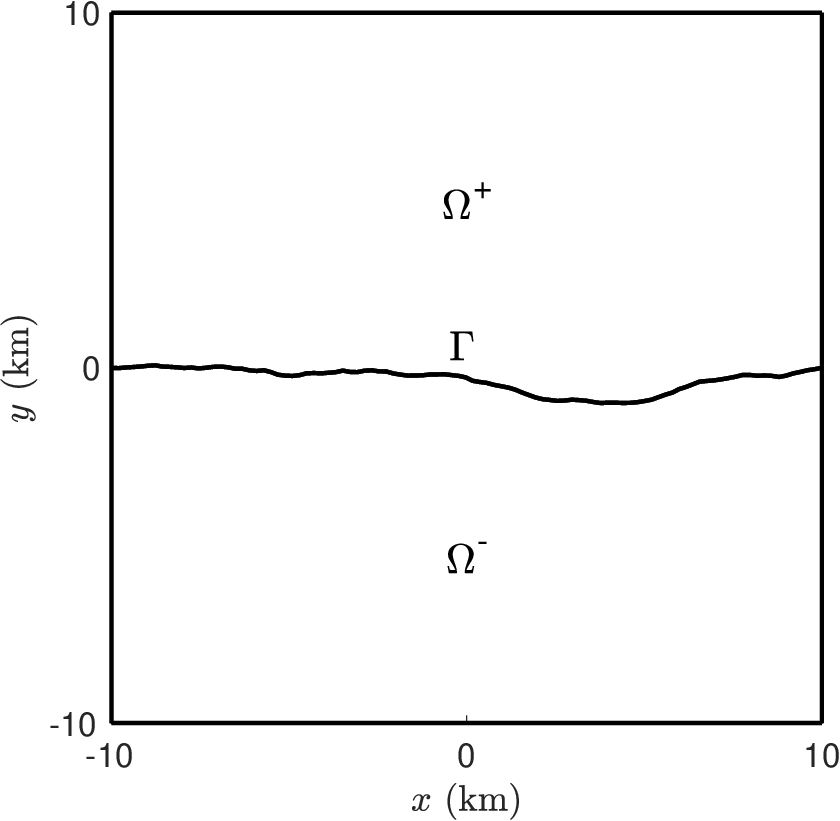}
  \caption{Domain with \changed{the} fault \changed{$\Gamma$} separating $\Omega^+$ and $\Omega^-$}\label{fig:fractal_fault_domain}
\end{figure}

\subsection{SBP finite difference discretization}\label{sec:antiplane_shear_sbp}
Discretizations of \eqref{eq:anti_plane_shear_fwd} using SBP finite differences, with weak enforcement of fault and boundary conditions through the simultaneous-approximation-term (SAT) method have been presented in \eg \cite{erickson_dunham_2014,duru_et_al_2019,erickson_et_al_2022,harvey_et_al_2023}. Here, we utilize the boundary-optimized SBP difference operators presented in \cite{mattsson_et_al_2018,stiernstrom_et_al_2023} to discretize the scalar wave operator on the physical grid $\bOmega$. The physical grid is obtained through transfinite interpolation of the coordinate mapping between $\Omega$ and a two-block Cartesian reference domain. See \eg \cite{almquist_dunham_2020,almquist_dunham_2021, stiernstrom_et_al_2023} for further details on the finite difference discretization of the scalar wave operator and the discretization of $\bOmega$.

Let $\bD_{II}(\bmu_\pm) \approx \dI \mu_\pm \dI$ be \changed{the} SBP finite difference approximation of the variable coefficient Laplace operator on the physical domains $\bOmega_\pm$. Then, the discretization of the scalar wave equation in \eqref{eq:anti_plane_shear_fwd} is given by
\begin{equation}\label{eq:anti_plane_shear_discr}
  \begin{array}{ll}
    \dbar{\brho}_\pm  \ddot{\bu}_\pm = \bD_{II}(\bmu_\pm) \bu_\pm + \text{SAT}_\pm, & t\in \T,\\
    \bu_\pm=\bu_{0\pm}, \quad \dot{\bu}_\pm = \bv_{0\pm}, & t = 0, \\
  \end{array}
\end{equation}
where $\text{SAT}_\pm$ weakly enforces boundary and interface conditions. Imposing the non-linear rate-and-state friction law through a standard traction SAT (\eg as in \cite{almquist_dunham_2020}) may lead to significant stiffness in dynamic rupture problems. To resolve the issue, we use the non-stiff SAT presented in \cite{erickson_et_al_2022,harvey_et_al_2023}, enforcing boundary and interface conditions through characteristic variables. \reviewerOne{Since the characteristics-based non-stiff SAT is a recent development for second-order wave equations it is described in detail below, for the sake of completeness.}

Consider first a single block $\Omega$ (either $\Omega_+$ or $\Omega_-$). On a boundary segment $\bm{\zeta}$ (either an exterior boundary or the fault interface), the non-stiff SAT is given by
\begin{equation}\label{eq:anti_plane_shear_char_SAT}
  \text{SAT}(\bu, \bu^*, \btau^*) = \bH^{-1}_{\Omega}\left(\be \bH_{\zeta}(\btau^* - \btau) - \bT^T \bH_{\zeta}(\bu^* - \bu)\right),
\end{equation}
where $\bu$, $\bu^*$, and $\btau^*$ all are evaluated on $\bm{\zeta}$. Here, $\bH_{\Omega}$ and $\bH_{\Gamma}$ are the SBP quadratures on $\bOmega$ and $\bm{\zeta}$, while \reviewerOne{$\be^T$} is the boundary restriction operator. Moreover, $\bT$ is the boundary traction operator such that $\btau := \bT \bu \approx \mu n_I\partial_I u (\brx)$, $\brx \in \zeta$, while $\bu^*$ and $\btau^*$ are target values (or fluxes) for displacement and traction. The target values are \changed{required} to preserve the outgoing grid-based characteristic variables,
\begin{equation}\label{eq:preserve_outgoing}
  \bw^* := \bZ\dot{\bu}^* - \btau^* = \bZ\dot{\bu} - \tilde{\btau} \changed{=:} \bw,\\
\end{equation}
where $\bZ = \sqrt{\brho\bmu}$ is the shear impedance and $\tilde{\btau}$ is the modified traction
\begin{equation}\label{eq:modified_tau}
   \tilde{\btau} = \btau + \gamma(\bu^* - \bu).
\end{equation}
The parameter $\gamma$ is the semi-bounded Dirichlet penalty parameter of \cite{almquist_dunham_2020}. Note that the units of $\gamma$ are such that $\gamma(\bu^* - \bu)$ has the units of traction. Moreover, $\bu^*$ and $\btau^*$ should satisfy the boundary or fault conditions. 

On an exterior boundary, the boundary conditions are conveniently expressed by introducing a reflection coefficient $\bR$, such that we may write the ingoing characteristic as $\bZ\dot{\bu}^* + \btau^* = \bR\bw^*$. This together with \eqref{eq:preserve_outgoing} results in
\begin{align}
  \btau^* &= \frac{\bR-\bI}{2}(\bZ\dot{\bu} - \tilde{\btau}), \label{eq:tau_star_bc}\\
  \dot{\bu}^* &= \frac{\bR+\bI}{2}(\dot{\bu} - \bZ^{-1}\tilde{\btau}). \label{eq:ode_ustar_bc}
\end{align}
Here $\bI$ is an identity matrix with dimensions matching the number of boundary points. Then, \changed{rigid-wall} (Dirichlet),  \reviewerOne{traction-free} (Neumann) or characteristic \changed{non-reflecting} boundary conditions are imposed by setting
\begin{equation}
  \begin{array}{ll}
    \bR = -\bI &\text{\changed{(rigid-wall)}},\\
    \bR = \bI &\text{\reviewerOne{(traction-free)}},\\
    \bR = \mathbf{0} &\text{(\changed{non-reflecting})},\\
  \end{array}
\end{equation}
along the corresponding boundary segment. On the outer boundaries, characteristic \changed{non-reflecting} conditions are specified, and we therefore set $\bR = 0$.

On the fault, the target tractions instead satisfy the fault condition
\begin{equation}\label{eq:tau_star_fault}
   \btau_\pm^* = \mp \bF(\bV^*,\bPsi),
\end{equation}
where $\bF$ is considered to apply point-wise. From \eqref{eq:preserve_outgoing} and \eqref{eq:tau_star_fault} the target slip velocity $\bV^*$ can be shown to satisfy
\begin{equation}\label{eq:V_star_nonlin}
  \bm{\kappa}\bV^* + \bF(\bV^*,\bPsi) = -\btau_\ell,
\end{equation}
where $\bm{\kappa} = \frac{\bZ_+\bZ_-}{\bZ_+ + \bZ_-}$ and $\btau_\ell = \frac{\bZ_+\bw_- - \bZ_-\bw_+}{\bZ_+ + \bZ_-}$. For non-linear $F$, solving \eqref{eq:V_star_nonlin} requires employing a root-finding algorithm. Note that $\bPsi$ is held fixed while solving \eqref{eq:V_star_nonlin}. Let $\tilde{\bV}^*(\btau_\ell)$ denote the solution to
\begin{equation}
  \bF(\tilde{\bV}^*,\bPsi) = -\btau_\ell,
\end{equation}
which can be solved analytically with our friction coefficient \eqref{eq:friction_coeff}. As noted in \cite{erickson_et_al_2022} one can show that $\bV^* \in [\changed{\tilde{\bV}^*}(\btau_\ell), 0]$ for \changed{$\btau_\ell \geq \btau^0 + \btau_L$} and $\bV^* \in [0, \changed{\tilde{\bV}^*}(\btau_\ell)]$ for \changed{$\btau_\ell < \btau^0 + \btau_L$}. Since $F_V > 0$ for admissible friction laws, there exists a unique solution $\bV^*$. 

From \eqref{eq:preserve_outgoing} it directly follows that $\dot{\bu}^*_\pm$ on the fault is given by
\begin{equation} \label{eq:ode_ustar_fault}
  \dot{\bu}^*_\pm = \dot{\bu}_\pm - \bZ_\pm^{-1}(\tilde{\btau}_\pm-\btau_\pm^*).
\end{equation}
Note that the characteristic boundary and fault conditions require the target displacements $\bu^*_\pm$ to be regarded as additional unknown grid functions, evolved together with $\bu_\pm$ and $\bPsi$ through \eqref{eq:ode_ustar_bc} and \eqref{eq:ode_ustar_fault}. For notational purposes we introduce
\begin{equation}\label{eq:ustar_tot}
  \dot{\bu}^*_\pm = \bL^*(\dot{\bu}_\pm,\tilde{\btau}_\pm) =
  \begin{cases}
    \frac{\bR+\bI}{2}(\dot{\bu}_\pm - \bZ_\pm^{-1}\tilde{\btau}_\pm)& \text{(exterior boundaries)}, \\
    \dot{\bu}_\pm - \bZ_\pm^{-1}(\tilde{\btau}_\pm-\btau_\pm^*)& \text{(fault)},
  \end{cases}
\end{equation}
with $\btau_\pm^*$ given by \eqref{eq:tau_star_fault}. Similarly, the linear operator for the right-hand-side of \eqref{eq:anti_plane_shear_discr} is denoted
\begin{equation}\label{eq:sbp_sat_rhs}
  \bL(\bu_\pm,\bu^*_\pm,\btau^*_\pm) := \bD_{II}(\bmu_\pm) \bu_\pm + \sum_{\bm{\zeta}}\text{SAT}(\bu_\pm,\bu^*_{\pm},\btau^*_{\pm}),
\end{equation}

The semi-discretization of the state evolution equation \eqref{eq:G_antiplane_shear} is
\begin{equation}\label{eq:state_evolution_discr}
    \begin{array}{ll}
    \dot{\bPsi} = \bG(\bV^*,\bPsi),& t \in \T,\\
    \bPsi = \bPsi_0,& t = 0,\\
    \end{array}
\end{equation}
where $\bG$ is evaluated pointwise. Note that $\bV^*$ is used in evolving $\bPsi$. This will be of importance when deriving the gradient to the discrete optimization problem in Section \ref{sec:antiplane_shear_opt}.

Finally, the SBP-SAT semi-discretization of \eqref{eq:anti_plane_shear_fwd} is given by
\begin{equation}\label{eq:anti_plane_shear_discr_fwd}
  \begin{array}{ll}
    \dbar{\brho}_\pm  \ddot{\bu}_\pm = \bL(\bu_\pm,\bu^*_\pm,\btau^*_\pm), & t\in \T,\\
    \dot{\bu}^*_\pm = \bL^*(\dot{\bu}_\pm,\tilde{\btau}_{\pm}) & t\in \T,\\
    \bu_\pm=\bu_{0\pm}, \quad \dot{\bu}_\pm = \bv_{0\pm}, & t = 0, \\
    \bu^*_{\pm} = \be^T_{\partial\Omega_\pm}\bu_{0\pm}, & t = 0, \\
    \dot{\bPsi} = \bG(\bV^*,\bPsi),& t \in \T,\\
    \bPsi = \bPsi_0,& t = 0.
  \end{array}
\end{equation}
In \cite{duru_et_al_2019,erickson_et_al_2022} it is shown (by using the SBP properties in Appendix \ref{app:sbp}) that the scheme is energy stable and that the semi-discrete energy rate is a consistent approximation of the continuous energy rate.

With the forward problem discretized in space, the semi-discretization of the optimization problem \eqref{eq:inverse_prob_3d} reads
\begin{equation}\label{eq:anti_plane_shear_semi_discr_opt}
  \min_\bp \bm{\F} \quad \text{subject to \eqref{eq:anti_plane_shear_discr_fwd}},
\end{equation}
where the semi-discrete misfit is given by
\begin{equation}\label{eq:anti_plane_shear_semi_discr_misfit}
  \bm{\F} =  \frac{1}{2}\sum_{k=1}^{N_{rec}}\int_0^T |\mathbf{r}^{(k)}(t)|^2\mathrm{d}t,
\end{equation}
and the semi-discrete residuals are
\begin{equation}\label{eq:anti_plane_shear_semi_discr_residual}
  \mathbf{r}^{(k)}(t) = \left(\ip{\changed{\hat{\bm{\delta}}_{\brx_r^{(k)}}}}{\mathbf{m}(t)}_\bOmega - m_{data}(t)\right),
\end{equation}
for a measured field $\mathbf{m}$. We refrain from including filtering operators $W$ in the semi-discrete presentation since no filtering is used in the numerical experiments performed in Section \ref{sec:num_studies}. The discrete Dirac delta $\changed{\hat{\bm{\delta}}_{\brx_r^{(k)}}} \approx \hat{\delta}(\brx - \brx^{(k)}_r)$ presented in \cite{petersson_et_al_2016} is used to restrict the measured field to the receiver location \changed{$\brx_r^{(k)}$}. The number of moment conditions used when constructing \changed{$\hat{\bm{\delta}}$} is the same as the order of the SBP difference operators. No smoothness conditions are used.

\subsection{Adjoint scheme and semi-discrete gradient}\label{sec:antiplane_shear_opt}
Analogously to the forward problem, the adjoint equations \eqref{eq:elastic_3d_adj} reduce to
\begin{equation}\label{eq:anti_plane_shear_adj}
  \begin{array}{lll}
    \rho_\pm  \ddot{u}^\dagger_\pm = \dI\mu_\pm \dI u^\dagger_\pm + Q^\dagger_\pm, & \brx\in \Omega_\pm, & t\in \T,\\
    u^\dagger_\pm =0, \quad \dot{u}^\dagger_\pm= 0, & \brx \in \Omega_\pm, & t = T, \\
    \tau^\dagger_\pm - Z\dot{u}^\dagger_\pm= 0 , & \brx \in \partial\Omega_\pm \setminus \Gamma, & t\in \T, \\
    \tau^\dagger_\pm = \mp F^\dagger(V^\dagger,\Psi^\dagger), & \brx \in \Gamma, & t\in \T, \\
    -\dot{\Psi}^\dagger = G^\dagger(V^\dagger,\Psi^\dagger), & \brx \in \Gamma, & t\in \T, \\
    \Psi^\dagger = 0, & \brx \in \Gamma, & t = T.
  \end{array}
\end{equation}
In the semi-discrete setting, we start by defining
\begin{equation}\label{eq:preserve_outgoing_adj}
  \bw^{\dagger*} := -\bZ\dot{\bu}^{\dagger*} - \btau^{\dagger*} = -\bZ\dot{\bu}^\dagger - \tilde{\btau}^\dagger \changed{=:} \bw^\dagger,
\end{equation}
which are the outgoing characteristic adjoint variables on a boundary segment after time reversal, where $\tilde{\btau}^\dagger$ is defined analogously to \eqref{eq:modified_tau}. On an exterior boundary, $\btau^{\dagger*}$ and $\dot{\bu}^{\dagger*}$ then satisfies
\begin{align}
  \btau^{\dagger*} &= \frac{\bR-\bI}{2}(-\bZ\dot{\bu}^\dagger - \tilde{\btau}^\dagger), \label{eq:tau_star_bc_adj}\\
  -\dot{\bu}^{\dagger*} &= \frac{\bR+\bI}{2}(-\dot{\bu}^\dagger - \bZ^{-1}\tilde{\btau}^\dagger). \label{eq:ode_ustar_bc_adj}
\end{align}
Similarly, on the fault
\begin{align}
  \btau_\pm^{\dagger*} &= \mp \bF^\dagger(\bV^{\dagger*}, \bPsi^\dagger) = \mp (\dbar{\bF}_V\bV^{\dagger*} + \dbar{\bG}_V\bPsi^\dagger), \label{eq:tau_star_fault_adj}\\
  -\dot{\bu}^{\dagger*}_\pm &=  -\dot{\bu}^\dagger_\pm - \bZ_\pm^{-1}(\tilde{\btau}^\dagger_\pm-\btau_\pm^{\dagger*}), \label{eq:ode_ustar_fault_adj}
\end{align}
where $\bF_V(\bV^*,\bPsi) \approx \frac{\partial F}{\partial V}$ and $\bG_V(\bV^*,\bPsi) \approx \frac{\partial G}{\partial V}$ are evaluated point-wise. The adjoint target slip velocity now satisfies
\begin{equation}\label{eq:V_star_nonlin_adj}
  \bm{\kappa}\bV^{\dagger*} + \bF^\dagger(\bV^{\dagger*},\bPsi^\dagger) = -\btau_\ell^\dagger = -\frac{\bZ_+\bw_-^\dagger - \bZ_-\bw_+^\dagger}{\bZ_+ + \bZ_-}.
\end{equation}
Note that $F^\dagger$ is linear in $V^\dagger$, such that \eqref{eq:V_star_nonlin_adj} reduces to a time-dependent linear equation in $\bV^{\dagger*}$. 
The semi-discrete adjoint state evolution equation is simply
\begin{equation}\label{eq:state_evolution_discr_adj}
  \begin{array}{ll}
  -\dot{\bPsi}^\dagger = \bG^\dagger(\bV^{\dagger*},\bPsi^\dagger) = (\dbar{\bG}_\Psi\bPsi^\dagger + \dbar{\bF}_\Psi\bV^{\dagger*}),& t \in \T,\\
  \bPsi^\dagger = 0,& t = T.\\
  \end{array}
\end{equation}
Again, $\bG_\Psi(\bV^*,\bPsi) \approx \frac{\partial G}{\partial \Psi}$ and $\bF_\Psi(\bV^*,\bPsi) \approx \frac{\partial F}{\partial \Psi}$ are evaluated point-wise.

Adopting the notation for the operators $\bL$ and $\bL^*$ in \eqref{eq:sbp_sat_rhs} and \eqref{eq:ustar_tot}, the adjoint equations are then discretized according to \eqref{eq:anti_plane_shear_discr_fwd}, resulting in
\begin{equation}\label{eq:anti_plane_shear_discr_adj}
  \begin{array}{ll}
    \dbar{\brho}_\pm  \ddot{\bu}_\pm^\dagger = \bL(\bu^\dagger_\pm,\bu^{\dagger*}_\pm,\btau^{\dagger*}_\pm) + \bQ^\dagger_\pm, & t\in \T,\\
    -\dot{\bu}^{\dagger*}_{\pm} = \bL^*(-\dot{\bu}^\dagger_\pm,\tilde{\btau}^\dagger_{\pm}) & t\in \T,\\
    \bu^\dagger_\pm = 0, \quad \dot{\bu}^\dagger_\pm = 0, & t = T, \\
    \bu^{\dagger*}_{\pm} = 0, & t = T, \\
    -\dot{\bPsi}^\dagger = \bG^\dagger(\bV^{\dagger*},\bPsi^\dagger),& t \in \T,\\
    \bPsi^\dagger = 0,& t = T.
  \end{array}
\end{equation}
The adjoint residual source term is given by $\bQ^\dagger_\pm = \sum_{k =1 }^{N_{rec}} \changed{\bS^{(k)} \hat{\bm{\delta}}_{\brx_r^{(k)}}}$ where
\begin{equation}
  \bS^{(k)}(t) = \begin{cases} 
    -\hat{\mathbf{r}}^{(k)}(t),& \mathbf{m} = \bu, \\
    \mathbf{r}^{(k)}(t),& \mathbf{m} = \dot{\bu},
  \end{cases}  
\end{equation}
with $\hat{\mathbf{r}}^{(k)}(t) = \int_0^t \mathbf{r}^{\changed{(k)}}(t')\mathrm{d}t' + \hat{\mathbf{r}}^{(k)}_0$, with $\hat{\mathbf{r}}_0^{(k)}$ chosen such that $\hat{\mathbf{r}}^{(k)}(T) = 0$. See Appendix \ref{app:semi_discr_receiver} for a derivation of the residual source term. \changed{As mentioned in Section \ref{sec:antiplane_shear_sbp} energy stability of the forward scheme \eqref{eq:anti_plane_shear_discr_fwd} was established in \cite{duru_et_al_2019}, \cite{erickson_et_al_2022}. Since the adjoint scheme \eqref{eq:anti_plane_shear_discr_adj} only differs in the source term and the rate-and-state equations, the stability of the adjoint scheme follows directly from the stability of the continuous adjoint rate-and-state equations.
}

We may now state the gradient to \eqref{eq:anti_plane_shear_semi_discr_opt}. It is given by the following Lemma which is the semi-discrete counterpart to Theorem \ref{thm:cont_grad} for antiplane shear.
\begin{lemma}\label{lemma:anti_plane_shear_semi_disc_grad}
  Let $\bV^{\dagger*}$, $\bPsi^{\dagger}$, satisfy \eqref{eq:anti_plane_shear_discr_adj}. Further, let $\bp$ be the fault grid function of a parameter in $F$ or $G$, and let $\bPsi_0$ be the initial state. If $\bV^*$ is computed exactly from \eqref{eq:V_star_nonlin}, then the gradient of the misfit in the semi-discrete inverse problem \eqref{eq:anti_plane_shear_semi_discr_opt} is given by
  \begin{equation}\label{eq:anti_plane_shear_semi_discr_grad}
    \begin{aligned}
      \frac{\partial \bm{\F}}{\partial \bp} &= -\int_0^T\bH_{\changed{\Gamma}}\left(\dbar{\bF}_p\bV^{\dagger*} +  \dbar{\bG}_p\bPsi^{\dagger}\right)\mathrm{d}t, \\
      \frac{\partial \bm{\F}}{\partial \bPsi_0} &= \bH_{\changed{\Gamma}}\bPsi^\dagger_0,
    \end{aligned}
  \end{equation}
  where $\bF_p(\bV^*,\bPsi) \approx \frac{\partial F}{\partial p}$ and $\bG_p(\bV^*,\bPsi) \approx \frac{\partial G}{\partial p}$ are evaulated point-wise and $\bPsi^\dagger_{0} = \bPsi^\dagger(t = 0)$.
\end{lemma}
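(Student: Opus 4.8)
The plan is to mirror the continuous derivation of Theorem \ref{thm:cont_grad}, but with integration by parts in space replaced by the discrete summation-by-parts identities of Appendix \ref{app:sbp}. First I would form the semi-discrete Lagrangian by adjoining the forward equations \eqref{eq:anti_plane_shear_discr_fwd} to the misfit \eqref{eq:anti_plane_shear_semi_discr_misfit}. Following the remark after Theorem \ref{thm:cont_grad}, the multiplier for the momentum balance is taken to be the adjoint velocity $\dot{\bu}^\dagger_\pm$ paired in the semi-discrete volume inner product $\ip{\cdot}{\cdot}_{\bOmega_\pm \times \T}$, and the state evolution equation is adjoined with $\bPsi^\dagger$ in the fault inner product $\ip{\cdot}{\cdot}_{\bGamma \times \T}$. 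The new feature relative to the continuous case is the auxiliary ODE $\dot{\bu}^*_\pm = \bL^*(\dot{\bu}_\pm, \tilde{\btau}_\pm)$ from \eqref{eq:ustar_tot} for the target displacements, which must also be adjoined with its own multiplier. As in the continuous proof, since the forward initial and homogeneous boundary data are $\bp$-independent one may set them to zero, so that $\bm{\L} = \bm{\F}$ and $\partial\bm{\L}/\partial\bp = \partial\bm{\F}/\partial\bp$ hold on the forward solution manifold.

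Second, I would integrate by parts in time to move the two time derivatives off $\bu_\pm$ and the single time derivative off $\bPsi$, discarding the endpoint contributions by the forward initial data and the adjoint terminal conditions in \eqref{eq:anti_plane_shear_discr_adj}; the state endpoint leaves exactly the term $\ip{\bPsi^\dagger_0}{\delta\bPsi_0}_{\bGamma}$ that yields the second line of \eqref{eq:anti_plane_shear_semi_discr_grad}. Next I would invoke the SBP property of $\bD_{II}(\bmu_\pm)$ to transfer the difference operator from $\bu_\pm$ onto $\bu^\dagger_\pm$, producing boundary quadrature terms $\bH_\zeta$ on each exterior segment and on the fault. The exterior terms cancel against the corresponding SAT contributions in \eqref{eq:anti_plane_shear_char_SAT}, using that the forward and adjoint problems share the same reflection coefficient $\bR$ in the characteristic conditions \eqref{eq:tau_star_bc}--\eqref{eq:ode_ustar_bc} and \eqref{eq:tau_star_bc_adj}--\eqref{eq:ode_ustar_bc_adj}; this is the discrete analogue of the vanishing of $BT_{ext}$.

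Third, and this is the crux, I would take the first variation and carefully track how the fault terms depend on the target slip velocity $\bV^*$. The essential point is that $\bV^*$, not a nodal jump, enters both the target traction $\btau^*_\pm = \mp\bF(\bV^*,\bPsi)$ and the state evolution $\bG(\bV^*,\bPsi)$, and that $\bV^*$ is defined only implicitly through \eqref{eq:V_star_nonlin}, whose right-hand side $\btau_\ell$ depends on $\bu_\pm$ and $\bu^*_\pm$ through the modified tractions in \eqref{eq:preserve_outgoing}--\eqref{eq:modified_tau}. I would therefore differentiate $\bm{\kappa}\bV^* + \bF(\bV^*,\bPsi) = -\btau_\ell$ to express $\delta\bV^*$ in terms of $\delta\bu_\pm$, $\delta\bu^*_\pm$, $\delta\bPsi$, and $\delta\bp$. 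The hypothesis that $\bV^*$ is computed exactly from \eqref{eq:V_star_nonlin} is precisely what makes this implicit chain rule exact and closes the argument; with inexact root finding the discrete gradient would acquire an error. Matching the resulting coefficients of $\delta\bu_\pm$, $\delta\bu^*_\pm$, and $\delta\bPsi$ against zero then forces the adjoint fault choices \eqref{eq:tau_star_fault_adj}, \eqref{eq:V_star_nonlin_adj}, and \eqref{eq:state_evolution_discr_adj}, with $\dbar{\bF}_V$, $\dbar{\bG}_V$, $\dbar{\bF}_\Psi$, $\dbar{\bG}_\Psi$ evaluated pointwise at the forward $(\bV^*,\bPsi)$, while the adjoint source term (Appendix \ref{app:semi_discr_receiver}) absorbs $\delta\bm{\F}$ exactly as $Q^\dagger$ did in the continuous case.

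Finally, with every term proportional to a forward variation eliminated by the adjoint equations \eqref{eq:anti_plane_shear_discr_adj}, the only survivors are those proportional to $\delta\bp$ and $\delta\bPsi_0$. Collecting them yields $\partial\bm{\F}/\partial\bp = -\int_0^T \bH_\Gamma(\dbar{\bF}_p \bV^{\dagger*} + \dbar{\bG}_p\bPsi^\dagger)\,\mathrm{d}t$ and $\partial\bm{\F}/\partial\bPsi_0 = \bH_\Gamma\bPsi^\dagger_0$, where the quadrature $\bH_\Gamma$ appears because all fault pairings carry it; this is \eqref{eq:anti_plane_shear_semi_discr_grad}. I expect the main obstacle to be the third step: correctly propagating $\delta\bV^*$ through the implicit, nonlinear relation \eqref{eq:V_star_nonlin} and confirming that the induced coefficients are exactly those cancelled by the characteristics-based adjoint fault conditions. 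This is where the dual consistency of the non-stiff SAT is genuinely established, and where the exactness assumption on $\bV^*$ is indispensable.
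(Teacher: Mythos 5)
Your proposal is correct in its overall architecture, and that architecture is the same as the paper's: the semi-discrete Lagrangian with $\dot{\bu}^\dagger_\pm$ and $\bPsi^\dagger$ as multipliers, the SBP property \eqref{eq:sbp_laplace_twice} standing in for spatial integration by parts, integration by parts in time with endpoint terms killed by the initial/terminal conditions, and the surviving terms giving \eqref{eq:anti_plane_shear_semi_discr_grad}. Where you genuinely diverge is at the step you yourself flag as the crux. You propose (a) adjoining the target-displacement ODE $\dot{\bu}^*_\pm = \bL^*(\dot{\bu}_\pm,\tilde{\btau}_\pm)$ with its own multiplier, and (b) implicitly differentiating \eqref{eq:V_star_nonlin} to express $\delta\bV^*$ in terms of $\delta\bu_\pm$, $\delta\bu^*_\pm$, $\delta\bPsi$, $\delta\bp$, then matching coefficients of the primitive variations to \emph{derive} the adjoint fault conditions. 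The paper does neither: it never differentiates \eqref{eq:V_star_nonlin} and introduces no extra multiplier. Instead, after expanding the SAT, it changes variables $\btau \to \tilde{\btau}$ via \eqref{eq:modified_tau} and invokes the characteristic-preservation identities \eqref{eq:preserve_outgoing}, \eqref{eq:preserve_outgoing_adj} in the form \eqref{eq:preserve_outgoing_mod_tau} to collapse every boundary term into pairings of target values only, $BT = \ip{\dot{\bu}^{\dagger*}}{\btau^*}_{\bm{\partial\Omega}_\pm \times \T} + \ip{\btau^{\dagger*}}{\dot{\bu}^*}_{\bm{\partial\Omega}_\pm \times \T}$, which vanishes on exterior boundaries and reduces on the fault to $IT = \ip{\btau^{\dagger*}_+}{\bV^*}_{\bGamma\times\T} - \ip{\bV^{\dagger*}}{\btau^*_+}_{\bGamma\times\T}$. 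Consequently the only fault variations that ever appear are $\partial\bV^*/\partial\bp$ and $\partial\bPsi/\partial\bp$, and their coefficients vanish identically by the already-given adjoint conditions \eqref{eq:tau_star_fault_adj} and \eqref{eq:state_evolution_discr_adj}; no implicit function theorem is needed. This also locates the exactness hypothesis more sharply than your "exact implicit chain rule": it is needed precisely for the identity $\jump{\dot{\bu}^*} = \bV^*$ used to write $IT$ (the adjoint counterpart $\jump{\dot{\bu}^{\dagger*}} = -\bV^{\dagger*}$ is automatic since \eqref{eq:V_star_nonlin_adj} is linear), and with an inexact solve $\jump{\dot{\bu}^*} = \bV^* + \bm{\varepsilon}$ the gradient inherits exactly the error $\ip{\btau^{\dagger*}_+}{\partial\bm{\varepsilon}/\partial\bp}_{\bGamma\times\T}$, as in the paper's closing remark. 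In short: your route is the fully mechanical discrete-adjoint derivation, which is more general and derives rather than verifies the adjoint scheme \eqref{eq:anti_plane_shear_discr_adj}, but at the cost of substantially heavier algebra (inverting $\bm{\kappa}+\dbar{\bF}_V$, tracking $\delta\btau_\ell$ through $\bw_\pm$ and $\tilde{\btau}_\pm$); the paper's route buys a short verification and a clean characterization of the root-finding error by exploiting the structure of the characteristics-based SAT. I see no step in your plan that would fail, provided you carry the coefficient matching through without error.
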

\begin{proof}
  See Appendix \ref{app:proof_grad}.
\end{proof}
Note that \eqref{eq:anti_plane_shear_semi_discr_grad} is a consistent approximation of \eqref{eq:gradient_adj}. \reviewerOne{In proving Lemma \ref{lemma:anti_plane_shear_semi_disc_grad} we further show that the forward scheme \eqref{eq:anti_plane_shear_discr_fwd} is dual consistent \cite{hicken_zingg_2011,berg_nordstrom_2012,hicken_zingg_2014,ghasemi_thesis}, \ie, that the adjoint scheme \eqref{eq:anti_plane_shear_discr_adj} is the adjoint of \eqref{eq:anti_plane_shear_discr_fwd}. This implies that the semi-discrete gradient \eqref{eq:anti_plane_shear_semi_discr_grad} is the exact (to machine precision) gradient to \eqref{eq:anti_plane_shear_semi_discr_misfit}, and that it is a consistent approximation of the continuous gradient.} The condition on the accuracy of $\bV^*$ is discussed in the remark at the end of Appendix \ref{app:proof_grad}. In summary, errors in $\bV^*$ translate to an error in the gradient.

\subsection{Time integration and the discrete gradient}\label{sec:antiplane_shear_time}
To integrate the forward and adjoint schemes \eqref{eq:anti_plane_shear_discr_fwd}, \eqref{eq:anti_plane_shear_discr_adj} in time, the standard fourth-order accurate Runge--Kutta method (RK4) will be used. To this end, introduce $\bv_\pm = \dot{\bu}_\pm$ such that \eqref{eq:anti_plane_shear_discr_fwd} may be rewritten to first-order in time as
\begin{equation}\label{eq:antiplane_shear_ode}
  \begin{array}{ll}
  \frac{\mathrm{d}{\bU}(t)}{\mathrm{d}t} = \bA(t,\bU),& t \in \T, \\
  \bU = \bU_0,& t = 0,
  \end{array}
\end{equation}
where

\begin{equation}\label{eq:antiplane_shear_first_order_system}
  \begin{array}{ll}
    \bA(t,\bU) = \begin{bmatrix}
      \bA_-(t,\bU_-) \\
      \bA_+(t,\bU_+) \\ 
      \bG(\bV^*,\bPsi)
    \end{bmatrix},
    &\bU = \begin{bmatrix}
      \bU_- \\
      \bU_+ \\
      \bPsi
    \end{bmatrix},
  \end{array}
\end{equation}

and
\begin{equation}\label{eq:antiplane_shear_first_order_system_pm}
  \begin{array}{ll}
    \bA_\pm(t,\bU_\pm) = \begin{bmatrix}
      \bv_\pm \\
      \dbar{\brho}^{-1}_\pm(\bL(\bu_\pm,\bu^*_\pm,\btau^*_\pm) + \bQ_\pm(t)) \\ 
      \bL^*(\bv_\pm,\tilde{\btau}_{\pm}) \\
    \end{bmatrix},
    &\bU_\pm = \begin{bmatrix}
      \bu_\pm \\
      \bv_\pm \\ 
      \bu^*_{\pm}
    \end{bmatrix}.
  \end{array}
\end{equation}
The ODE system \eqref{eq:antiplane_shear_ode} is then integrated in time using RK4, where in each step \eqref{eq:V_star_nonlin} is solved for $\bV^*$ using bisection. 

To integrate the adjoint scheme \eqref{eq:anti_plane_shear_discr_fwd} the reversed time $t^\dagger = T-t$ is introduced. Then, the adjoint ODE system is
\begin{equation}\label{eq:antiplane_shear_ode_adj}
  \begin{array}{ll}
    \frac{\mathrm{d}{\bU}^\dagger(t^\dagger)}{\mathrm{d}t^\dagger} = \bA^\dagger(t^\dagger,\bU^\dagger),& t^\dagger \in \T, \\
  \bU^\dagger = \bU_0^\dagger,& t^\dagger = 0.
  \end{array}
\end{equation}
Here 
\begin{equation}\label{eq:antiplane_shear_first_order_system_adj}
  \begin{array}{ll}
    \bA^\dagger(t^\dagger,\bU^\dagger) = \begin{bmatrix}
      \bA^\dagger_-(t^\dagger,\bU^\dagger_-) \\
      \bA^\dagger_+(t^\dagger,\bU^\dagger_\changed{+}) \\ 
      \bG^\dagger(\bV^{\dagger*},\bPsi^\dagger)
    \end{bmatrix},
    &\bU = \begin{bmatrix}
      \bU^\dagger_- \\
      \bU^\dagger_+ \\
      \bPsi^\dagger
    \end{bmatrix},
  \end{array}
\end{equation}
\begin{equation}\label{eq:antiplane_shear_first_order_system_pm_adj}
  \begin{array}{ll}
    \bA^\dagger_\pm(t^\dagger,\bU^\dagger_\pm) = \begin{bmatrix}
      \bv^\dagger_\pm \\
      \dbar{\brho}^{-1}_\pm(\bL(\bu^\dagger_\pm,\bu^{\dagger*}_\pm,\btau^{\dagger*}_\pm) + \bQ^\dagger_\pm(\changed{T-t^\dagger})) \\ 
      \bL^*(\bv^\dagger_\pm,\tilde{\btau}^\dagger_{\pm}) \\
    \end{bmatrix},
    &\bU^\dagger_\pm = \begin{bmatrix}
      \bu^\dagger_\pm \\
      \bv^\dagger_\pm \\ 
      \bu^{\dagger*}_{\pm} \\
    \end{bmatrix},
  \end{array}
\end{equation}
where for each \changed{$t^\dagger$} \eqref{eq:V_star_nonlin_adj} is solved for $\bV^{\dagger*}$. Note that the forward variables $\bV^*$ and $\bPsi$ entering as variable coefficients in $\bF^\dagger$ and $\bG^\dagger$, also are evaluated \changed{at time $T-t^\dagger$}.

To evaluate the integrals over time in \eqref{eq:anti_plane_shear_semi_discr_misfit} and \eqref{eq:anti_plane_shear_semi_discr_grad}, the Runge--Kutta quadrature is utilized. That is, for time interval $n$ with step size $\Delta t_n$, the Runge--Kutta quadrature weight at substage $s \in [1,4]$ is given by $\bH_{\T,4n+s} = \Delta t_n b_s$, where $b = [1/6,	1/3,	1/3,	1/6]$ \cite{butcher_num_ode}. Then for $N$ time steps, the $4N\times 4N$ diagonal matrix $\bH_\T$ is a quadrature on the temporal grid $\bm{\T}$. The fully discrete optimization problem reads
\begin{equation}\label{eq:anti_plane_shear_fully_disc_opt}
  \min_\bp \bm{\F} \quad \text{subject to \eqref{eq:antiplane_shear_ode} solved using RK4},
\end{equation}
where
\begin{equation}\label{eq:anti_plane_shear_fully_disc_misfit}
  \bm{\F} = \frac{1}{2}\sum_{k=1}^{N_{rec}}\sum_{n=0}^{N-1} \sum_{s = 1}^4 \bH_{\T,4n+s}|\widehat{\mathbf{r}}^{(k)}_{4n+s}|^2.
\end{equation}
with the discrete residual given by
\begin{equation}\label{eq:anti_plane_shear_fully_disc_residual}
  \widehat{\mathbf{r}}^{(k)}_{4n+s} = \ip{\changed{\hat{\bm{\delta}}_{\brx_r^{(k)}}}}{\widehat{\mathbf{m}}^{(k)}_{4n+s}}_\bOmega - m^{(k)}_{data}(t_{n}+c_s \Delta t_n).
\end{equation}
Here $\widehat{\mathbf{m}}^{(k)}$ is the measured field at receiver $k$ for all Runge--Kutta stages between $t = 0$ and $t = T$, $c = [0, 1/2, 1/2, 1]$ are the RK4 stage nodes, and $t_0 = 0$. Note that for each index $i$, $\widehat{\mathbf{m}}^{(k)}_i$ is a grid function. 

We now arrive at one of the major results of this work, namely the discrete counterpart to Theorem \ref{thm:cont_grad} for dynamic rupture inversions in antiplane shear.
\begin{theorem}\label{thm:disc_grad}
  Let $\widehat{\bV}^{\dagger*}$, $\widehat{\bPsi}^{\dagger}$, be the \changed{space-time} solutions (including Runge--Kutta substages) to \eqref{eq:antiplane_shear_ode_adj} solved using RK4. Further, let $\bp$ be the fault grid function of a parameter in $F$ or $G$, and let $\bPsi_0$ be the initial state. If in each time step $\bV^*$ is computed exactly from \eqref{eq:V_star_nonlin}, then the gradient of the misfit in the discrete inverse problem \eqref{eq:anti_plane_shear_fully_disc_opt} is given by
  \begin{equation}\label{eq:anti_plane_shear_fully_discr_grad}
    \begin{aligned}
      \frac{\partial \bm{\F}}{\partial \bp} &= -\sum_{n=0}^{N-1} \sum_{s = 1}^4 \bH_{\T,4n+s} \bH_\Gamma\left(\dbar{\bF}_{p,4n+s}\widehat{\bV}^{\dagger*}_{4n+s} + \dbar{\bG}_{p,4n+s}\widehat{\bPsi}^{\dagger}_{4n+s}\right), \\
      \frac{\partial \bm{\F}}{\partial \bPsi_0} &= \bH_\Gamma \widehat{\bPsi}^\dagger_0,
    \end{aligned}
  \end{equation}
  where $\bF_p(\widehat{\bV}^*,\widehat{\bPsi}) \approx \frac{\partial F}{\partial p}$, $\bG_p(\widehat{\bV}^*,\widehat{\bPsi}) \approx \frac{\partial G}{\partial p}$ are evaluated point-wise using the Runge--Kutta stage approximations.
\end{theorem}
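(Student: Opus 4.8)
The plan is to mirror the Lagrangian argument used for the semi-discrete Lemma~\ref{lemma:anti_plane_shear_semi_disc_grad}, but now carried out at the fully discrete level so that the RK4 time discretization is treated as an additional set of algebraic constraints. Concretely, I would view the RK4 solve of \eqref{eq:antiplane_shear_ode} as defining every stage value and every step value of $\bU = [\bU_-,\bU_+,\bPsi]^T$ implicitly as a function of $\bp$, and then form a discrete Lagrangian
\begin{equation}
\bm{\L} = \bm{\F} + \sum_{n} \sum_{s} \left(\text{costate}\right) \cdot \left(\text{RK stage/update residual at }(n,s)\right),
\end{equation}
where the residuals enforce the stage equations and the step update of \eqref{eq:antiplane_shear_ode}. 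Since $\bm{\L} = \bm{\F}$ whenever the forward RK equations hold, and those equations are independent of the multipliers, $\partial\bm{\L}/\partial\bp = \partial\bm{\F}/\partial\bp$. The goal is to choose the costate variables so that the coefficients multiplying all state variations $\delta\bU$ vanish, leaving only the terms depending explicitly on $\bp$ through $\bF$ and $\bG$; these residual terms should be exactly \eqref{eq:anti_plane_shear_fully_discr_grad}.

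The key step is to invoke the theory of adjoint Runge--Kutta methods \cite{sanz_serna_2016}. Setting the variations of $\bm{\L}$ with respect to the stage and step values to zero yields a backward recursion for the costate. Because all RK4 weights $b_s = \bH_{\T,4n+s}/\Delta t_n$ are nonzero, this recursion can be rewritten, after the substitution $\tilde{a}_{ij} = b_j a_{ji}/b_i$, $\tilde{b}_i = b_i$, as a genuine Runge--Kutta method applied to the reversed-time adjoint system \eqref{eq:antiplane_shear_ode_adj}. The crucial observation is that, with the RK quadrature $\bH_\T$ used consistently in the misfit \eqref{eq:anti_plane_shear_fully_disc_misfit}, this adjoint method coincides with RK4 itself applied in reversed time. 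This is the time-discrete counterpart of the spatial dual consistency already established in Lemma~\ref{lemma:anti_plane_shear_semi_disc_grad}; together they give full space--time dual consistency and identify the costate stage values with $\widehat{\bV}^{\dagger*}$ and $\widehat{\bPsi}^\dagger$.

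The one genuinely nonlinear ingredient is the pointwise root-find \eqref{eq:V_star_nonlin} for $\bV^*$, which couples the stage values of $\bu_\pm$, $\bu^*_\pm$, and $\bPsi$. Differentiating the discrete system through this solve requires the implicit function theorem: since $F_V > 0$ for admissible friction laws, the Jacobian $\bm{\kappa} + \dbar{\bF}_V$ is invertible, so $\bV^*$ is a smooth function of the stage data and of $\bp$, and its sensitivities produce exactly the factors $\dbar{\bF}_V$, $\dbar{\bG}_V$, $\dbar{\bF}_\Psi$, $\dbar{\bG}_\Psi$, $\dbar{\bF}_p$, $\dbar{\bG}_p$ appearing in the adjoint stage equations \eqref{eq:tau_star_fault_adj}, \eqref{eq:state_evolution_discr_adj} and in \eqref{eq:anti_plane_shear_fully_discr_grad}. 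This is where the hypothesis that $\bV^*$ is computed exactly is essential: any error in the root-find breaks the chain rule and contaminates the gradient, as already noted for the semi-discrete case. Assembling the surviving $\bp$-explicit terms, with the weights $\bH_{\T,4n+s}$ emerging from the consistent use of $\bH_\T$ in both the misfit and the costate recursion, yields the first line of \eqref{eq:anti_plane_shear_fully_discr_grad}; the initial-state contribution $\bH_\Gamma\widehat{\bPsi}^\dagger_0$ follows from the telescoping of the discrete state-evolution constraint, exactly as the term $\ip{\Psi^\dagger_0}{\delta\Psi_0}_\Gamma$ arose in the continuous proof.

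I expect the main obstacle to be establishing the time-discrete dual consistency precisely, that is, verifying that the backward costate recursion obtained from $\delta\bm{\L}=0$ really is RK4 applied to \eqref{eq:antiplane_shear_ode_adj} rather than some other embedded Runge--Kutta method. This hinges on the algebraic relations between the forward and adjoint RK coefficients and on the weights $b_s$ being used identically in the misfit quadrature and in the adjoint propagation; carefully bookkeeping the stage-to-stage dependencies, while simultaneously tracking the implicitly defined $\bV^*$ through the chain rule, is the delicate part of the argument.
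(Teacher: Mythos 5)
Your proposal is correct in substance, but it is organized quite differently from the paper's proof, which is deliberately modular and only two sentences long: the paper invokes Lemma~\ref{lemma:anti_plane_shear_semi_disc_grad} (whose Appendix~\ref{app:proof_grad} derivation already contains the spatial Lagrangian computation, the fault boundary-term cancellations, and the treatment of the nonlinear solve for $\bV^*$), and then cites the fact that RK4 is self-adjoint with respect to its own quadrature $\bH_\T$ under time reversal \cite{sanz_serna_2016,matsuda_miyatake_2021}, so that discretizing the semi-discrete adjoint system \eqref{eq:antiplane_shear_ode_adj} with RK4 in reversed time, and the time integrals with the RK quadrature, reproduces the exact gradient of \eqref{eq:anti_plane_shear_fully_disc_misfit}. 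You instead propose a monolithic fully discrete KKT/Lagrangian derivation in which the RK4 stage and update equations are explicit algebraic constraints, the costate recursion is derived from scratch, and the Hager--Sanz-Serna coefficient transposition $\tilde{a}_{ij}=b_j a_{ji}/b_i$ (together with the stage relabeling needed to make the reversed-time scheme explicit, which for RK4 returns exactly RK4's tableau) is verified rather than cited; you likewise re-derive the sensitivity of the root-find \eqref{eq:V_star_nonlin} via the implicit function theorem, which the paper only needs once, at the semi-discrete level. Both routes rest on the same two pillars -- spatial dual consistency of the SBP-SAT scheme and temporal self-adjointness of RK4 with its quadrature -- so the mathematics is the same; what differs is the factorization of the argument. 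Your version is self-contained and makes explicit where the weights $\bH_{\T,4n+s}$ and the stage-level adjoint sources enter (including the fact that the reversed stage abscissae $1-c_s$ match $c_s$ for RK4, which is what lets the forward stage data be reused), at the price of the heavy bookkeeping you yourself identify as the delicate step; the paper's version buys brevity and robustness by reusing Lemma~\ref{lemma:anti_plane_shear_semi_disc_grad} and an established theorem, so that nothing about the spatial discretization or the friction nonlinearity has to be revisited when the time integrator is analyzed. One caution if you carry out your plan: the formula $\tilde{a}_{ij}=b_j a_{ji}/b_i$ alone describes the backward recursion with negative steps; only after composing it with the stage-order reversal does one obtain an explicit tableau to compare with RK4, and skipping that relabeling is the easiest place for the verification to silently go wrong.
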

\begin{proof}
  RK4 is self-adjoint with respect to $\bH_{\T}$ under time reversal, see \cite{sanz_serna_2016,matsuda_miyatake_2021}. Therefore, by Lemma \ref{lemma:anti_plane_shear_semi_disc_grad}, the result follows.
\end{proof}
\reviewerOne{The combination of RK4 with the SBP-SAT discretization thus results in a space-time dual-consistent discretization. Specifically, this means that \eqref{eq:anti_plane_shear_fully_discr_grad} is the exact gradient to \eqref{eq:anti_plane_shear_fully_disc_misfit} and a consistent approximation of the continuous gradient.} Again, errors in the non-linear solve of $\bV^*$ will result in an error in $\frac{\partial \bm{\F}}{\partial \bp}$, see the remark at the end of Appendix \ref{app:proof_grad}.

To summarize, when solving the optimization problem \eqref{eq:anti_plane_shear_fully_disc_opt}, the gradient of \eqref{eq:anti_plane_shear_fully_disc_misfit} is obtained through the following procedure:
\begin{enumerate}
  \item Solve \eqref{eq:antiplane_shear_ode} using RK4, storing the Runge--Kutta stage approximations $\widehat{\bV}^*$, $\widehat{\bPsi}$ and $\widehat{\mathbf{r}}$.
  \item Solve \eqref{eq:antiplane_shear_ode_adj} using RK4, with $\widehat{\bV}^*$, $\widehat{\bPsi}$ and the residuals $\widehat{\mathbf{r}}^{(k)}$ in $\bQ^\dagger$ reversed in time. Use the time steps from the forward solve in reverse. Store the Runge--Kutta stage approximations $\widehat{\bV}^{\dagger*}$, $\widehat{\bPsi}^{\dagger}$.
  \item Reverse $\widehat{\bV}^{\dagger*}$, $\widehat{\bPsi}^{\dagger}$ in time and compute the gradient according to \eqref{eq:anti_plane_shear_fully_discr_grad}.
\end{enumerate}

\subsection{Lower-dimensional parameter representation}
The misfit functional in the inverse problem \eqref{eq:anti_plane_shear_fully_disc_opt} is typically non-convex with many local minima. As a way to regularize \eqref{eq:anti_plane_shear_fully_disc_opt} by reducing the discrete solution space, a lower-dimensional parameter representation of $\bp$ may therefore be beneficial. Examples of such lower-dimensional representations are polynomials or splines \cite{samareh_2001,bader_et_al_2023}. Here we make use of a coarse grid representation of the parameters and move between the lower-dimensional parameter grid and the higher-dimensional computational grid using interpolation operators.

Denote an inversion parameter on the coarse fault grid by $\bp^C$ and let $\bI^{C2F}$ be an interpolation operator from the coarse grid to the fine computational grid. Using superscript $F$ for fault grid functions on the computational grid, $\bp^F$ is given by
\begin{equation} \label{eq:interp_p}
    \bp^F = \bI^{C2F} \bp^C.
\end{equation}
The gradient expressions in \eqref{eq:anti_plane_shear_fully_discr_grad} are derived with respect to $\bp^F$, and are of the form
\begin{equation}\label{eq:anti_plane_shear_grad_fine}
    \frac{\partial \bm{\F}}{\partial \bp^F} = \bH^F_{\Gamma} \bm{\Phi}^F,
\end{equation}
where $\bm{\Phi}^F$ is a combination of forward and adjoint fields. By the chain rule, the gradient with respect to $\bp^C$ is
\begin{equation}
    \frac{\partial \bm{\F}}{\partial \bp^C} = \frac{\partial \bm{\F}}{\partial \bp^F} \frac{\partial \bp^F}{\partial \bp^C}. 
\end{equation}
Differentiating \eqref{eq:interp_p} stated in \reviewerTwo{index} notation (where $i,j,k$ denote components in the matrices and vectors \reviewerTwo{and the summation convention applies}) leads to
\begin{equation}
    \frac{\partial \bp^F_i}{\partial \bp^C_k} = \bI_{ij}^{C2F} \delta_{jk} = \bI_{ik}^{C2F} = (\bI^{C2F})^T_{ki}.
\end{equation}
Hence,
\begin{equation}\label{eq:anti_plane_shear_grad_coarse}
    \frac{\partial \bm{\F}}{\partial \bp^C} = (\bI^{C2F})^T \bH^F_{\Gamma} \bm{\Phi}^F.
\end{equation}
If $\bI^{C2F}$ is any interpolation operator, it is not guaranteed that $(\bI^{C2F})^T$ is a consistent approximation of interpolation of fields from the fine to the coarse grid. Thus, it is not guaranteed that $\frac{\partial \bm{\F}}{\partial \bp^C}$ is consistent with the continuous gradient. However, by requiring that $\bI^{C2F}$ is constructed such that
\begin{equation}\label{eq:adjoint_interpolation}
    \bI^{F2C} := (\bH_\Gamma^C)^{-1} (\bI^{C2F})^T \bH^F_\Gamma
\end{equation}
is a consistent interpolation operator from the fine grid to the coarse, then \eqref{eq:anti_plane_shear_grad_coarse} reads
\begin{equation}\label{eq:anti_plane_shear_grad_coarse_sbp}
    \frac{\partial \bm{\F}}{\partial \bp^C} = \bH^C_{\Gamma} \bI^{F2C} \bm{\Phi}^F.
\end{equation}
This corresponds to first interpolating $\bm{\Phi}^F$ to the coarse grid and then applying the coarse fault quadrature. In \cite{almquist_et_al_2019} interpolation operators satisfying \eqref{eq:adjoint_interpolation} are shown to exist, provided $\bH_\Gamma^{C,F}$ are accurate quadratures. Here, we use intermediate glue grids to construct interpolation operators for the boundary-optimized SBP difference operators, see \cite{kozdon_wilcox_2016, waves2022}.

\begin{remark}
  Replacing $(\bI^{C2F})^T$ in \eqref{eq:anti_plane_shear_grad_coarse} with any accurate interpolation operator $\bI^{F2C}$ would result in consistency with the continuous gradient and an approximation of \eqref{eq:anti_plane_shear_grad_fine} to the order of accuracy of $\bI^{F2C}$. The resulting gradient computation would then correspond to a continuous-adjoint approach \cite{hicken_zingg_2014}. The additional requirement \eqref{eq:adjoint_interpolation} guarantees that the continuous-adjoint approach is equivalent to the discrete-adjoint approach, meaning that we obtain the exact gradient of \eqref{eq:anti_plane_shear_fully_disc_misfit} while remaining consistent with the continuous gradient. \changed{In other words, the interpolation preserves the dual consistency of the forward scheme \eqref{eq:anti_plane_shear_discr_fwd}.} The property \eqref{eq:adjoint_interpolation} states that $\bI^{F2C} = (\bI^{C2F})^\dagger$ in the $\bH_\Gamma$ norm. It is termed SBP or inner product preserving and is commonly used to couple SBP schemes across non-conforming interfaces \cite{mattsson_carpeter_2010,nissen_et_al_2015,lundquist_et_al_2018,kozdon_wilcox_2016,almquist_et_al_2019,lundquist_et_al_2020}. Interestingly, the property is useful also in the present setting.
\end{remark}

\subsection{Numerical studies}\label{sec:num_studies}
In this section, we corroborate our theoretical results and present inversions for friction parameters and initial stresses in dynamic rupture simulations using synthetic data. The experiments are performed on the domain in Figure \ref{fig:fractal_fault_domain}, where the two elastic blocks are separated by a rough (band-limited self-similar fractal) fault \cite{dunham_et_al_2011,fang_dunham_2013}. The parameter set used for synthetic data is listed in Table \ref{table:parameters}. 
\begin{table}[h!]
  \centering
  \caption{\reviewerTwo{Parameter values used in reference problem setup, divided into material parameters, frictional parameters, and initial conditions.}}\label{table:parameters}
  \begin{tabular}{l|l|l}
    Parameter & Symbol & Value \\
    \hline
    Density   & $\rho_\pm$   & 2.6700 g/$\text{cm}^3$ \\
    Shear modulus   & $\mu_\pm$   & 32.0381 GPa \\
    \hline
    Initial stress   & $\sigma^{\changed{0}}_n$   & 120 MPa \\
                     & $\sigma^{\changed{0}}_{yz}$        & 72 MPa \\
    Reference friction coefficient for steady sliding   & $f_0$   & 0.6 \\
    Direct effect parameter   & $a$   & 0.009 $\brx \in \Gamma_{VW}$,\\
                              &       & 0.013 otherwise \\
    State evolution effect parameter   & $b$   & 0.011 \\
    State evolution distance   & $D_c$ & 0.2 m $\brx \in \Gamma_{VW}$,\\
                               &       & 1.0 m otherwise \\
    Reference slip velocity   & $V_0$   & $10^{-6}$ m/s \\
    \hline
    Initial displacement   & $u_{0\pm}$   & 0\\
    Initial velocity   & $v_{0\pm}$   & $\changed{\pm} 5 \changed{\times} 10^{-13}$ m/s\\
    Initial state   & $\Psi_0$   & 0.7243\\ %0.724339595021678
  \end{tabular}
\end{table}
Here $\Gamma_{VW} = \{\brx \in \Gamma, x \in [-5,6] \text{ km} \}$ denotes the central velocity\changed{-}weakening (VW) region of the fault, \ie, the part of the fault where \reviewerTwo{$a(\bar{x}) - b(\bar{x}) < 0$}. Almost all slip occurs in this region. In contrast, the region where \reviewerTwo{$a(\bar{x}) - b(\bar{x}) > 0$} is velocity\changed{-}strengthening (VS), which serves to arrest the rupture. The initial stress tensor is such that the normal stress $\sigma^{\changed{0}}_n$ is constant while \changed{the} shear traction varies along the fault according to \reviewerTwo{$\tau^0(\bar{x}) = \sigma_{yz}^0 \hat{n}^-_y(\bar{x})$}, where \reviewerTwo{$\hat{n}^-_y(\bar{x})$} is the (spatially variable) $y$-component of the fault normal \changed{on the $-$ side of the fault,} and $\sigma_{yz}^0$ is a constant remote shear stress \changed{(and $\sigma_{xz}^0 = 0$)}. A one-dimensional Gaussian distribution centered about $x_c = 3$ km, with amplitude $A = 25$ MPa and standard deviation $d = 2$ km is added as loading to the fault, \ie, setting $\tau_L(\brx) = A e^{-\frac{(x-x_c)^{2}}{2d^2}}$ in \eqref{eq:F_antiplane_shear}. The problem setup is such that the external loading will cause an earthquake to nucleate at \changed{$\bar{x}_c = (x_c,y_c)$ where $y_c = -0.896$ km is the $y$-coordinate on the fault at $x_c$}. The stress and friction conditions are chosen so that rupture nucleates gradually over the first few seconds of the simulation, before transitioning to a propagating rupture. Left- and right-going rupture fronts propagate along the fault at a speed slightly lower than the shear wave speed until they reach the VS region where rupture arrests.

Using $m$ grid points in the $x$-direction, the domain is discretized such that the grid spacing is approximately equal in both directions, resulting in $m \times (m+1)$ grid points in total across the multiblock grid $\bOmega = \bOmega_-\cup\bOmega_+$. The number of grid points used for lower-dimensional parameter representations is denoted $m_p$. 8th-order accurate boundary-optimized SBP operators are used for the spatial discretization. In \cite{stiernstrom_et_al_2023} it was shown that these operators provide an efficient alternative for wave propagation problems where boundary or interface effects are significant. The non-linear equation \eqref{eq:V_star_nonlin} for target slip velocity $\bV^*$ is solved using bisection, specifying an absolute tolerance $10^{-13}$ m/s. 

\changed{Snapshots of the velocity field from a forward simulation using $m = 1001$ and $\Delta t = 0.0005$ s up to $T = 6$ s are presented in Figure \ref{fig:fractal_fault_v}, illustrating the nucleation, propagation, and arrest of the rupture.} The velocity field is nonzero within a circular wavefront known in seismology as the starting phase. It has peaks at the two rupture fronts and is discontinuous across the parts of the fault that are actively slipping. Additional circular wavefronts emanate from the fault as the rupture accelerates or decelerates in response to the spatially variable initial stress and complex fault geometry. Additional larger amplitude waves are produced \changed{by} rupture arrests that bring the particle velocity back toward zero; these are known as stopping phases. Finally, wave reflections from the right boundary are visible, as the outflow boundary condition is only effective at absorbing normally incident waves. \changed{In Figure \ref{fig:fractal_fault_slip_and_vel} plots of slip velocity $\bV^*$ and fault slip $\jump{\bu}$ are presented. This setup will be used in Section \ref{sec:high_resolution_data} to generate high-resolution synthetic data used for inversion.}

\begin{figure}[h!]
  \centering
  \begin{subfigure}[b]{0.49\textwidth}
      \centering
      \includegraphics[width=\textwidth]{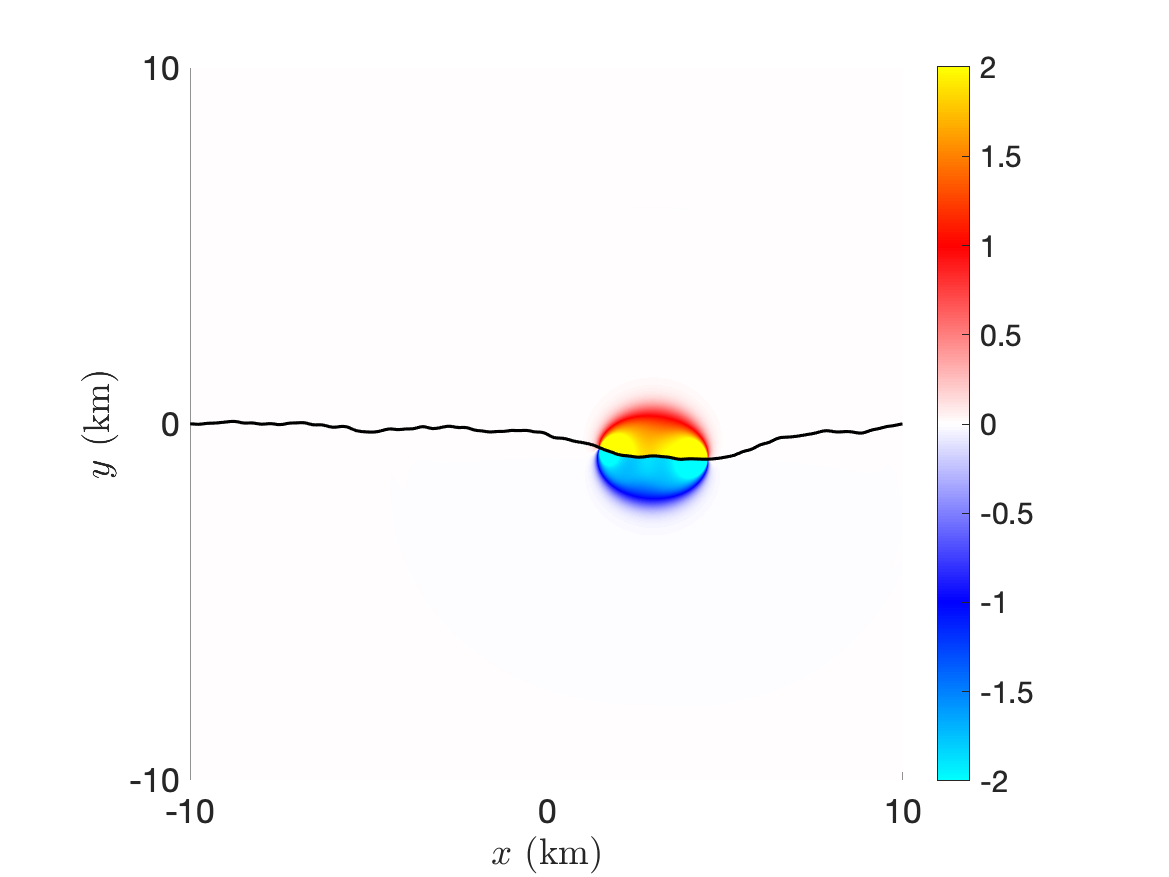}
      \caption{$t=2.00$ s}
  \end{subfigure}
  \hfill
  \begin{subfigure}[b]{0.49\textwidth}
      \centering
      \includegraphics[width=\textwidth]{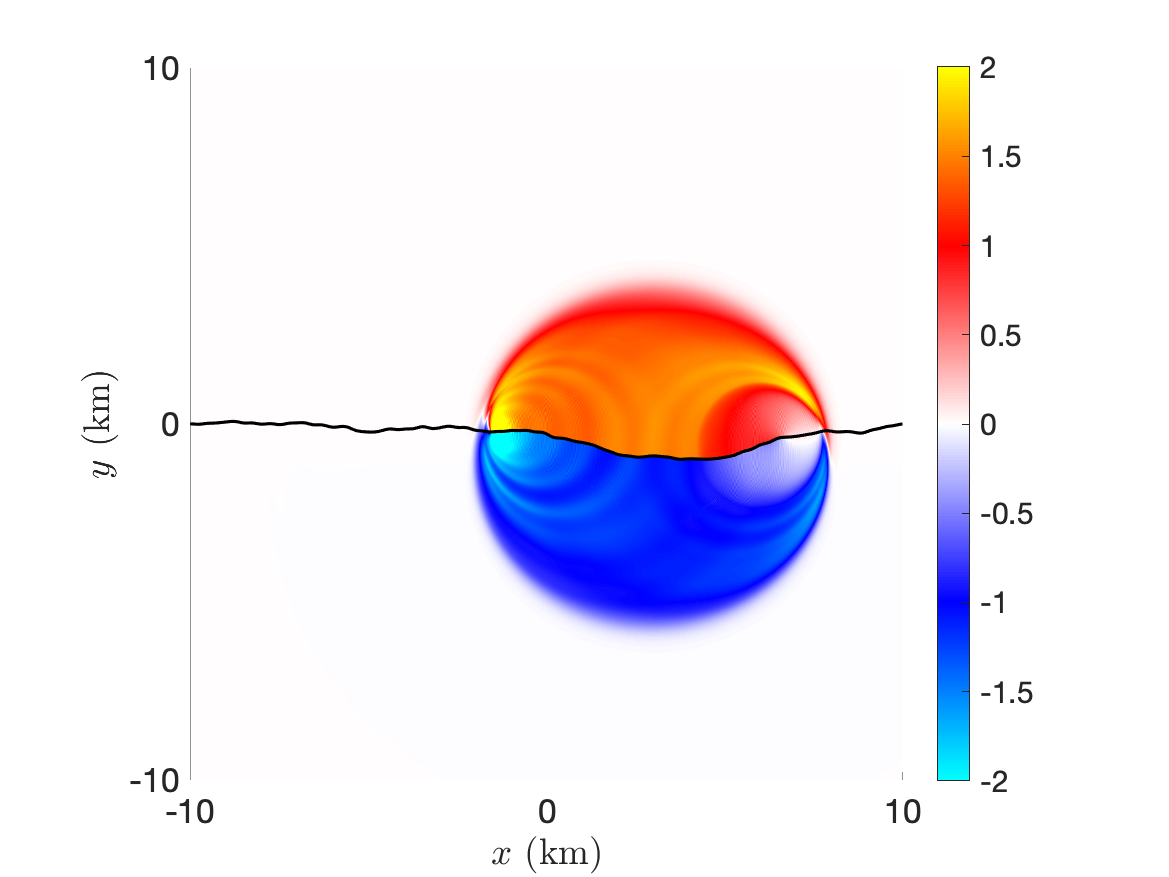}
      \caption{$t=3.00$ s}
  \end{subfigure}
  \\
  \begin{subfigure}[b]{0.49\textwidth}
    \centering
    \includegraphics[width=\textwidth]{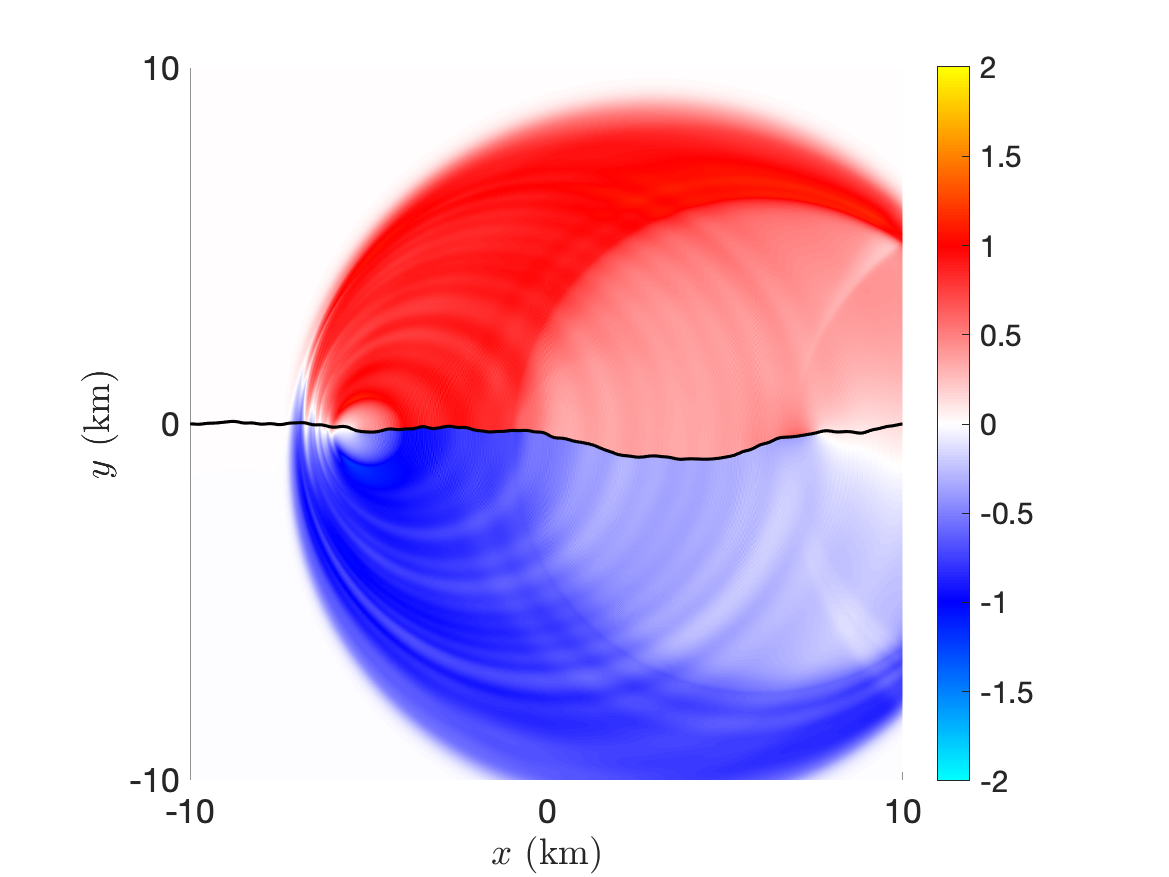}
    \caption{$t=4.50$ s}
  \end{subfigure}
  \hfill
  \begin{subfigure}[b]{0.49\textwidth}
      \centering
      \includegraphics[width=\textwidth]{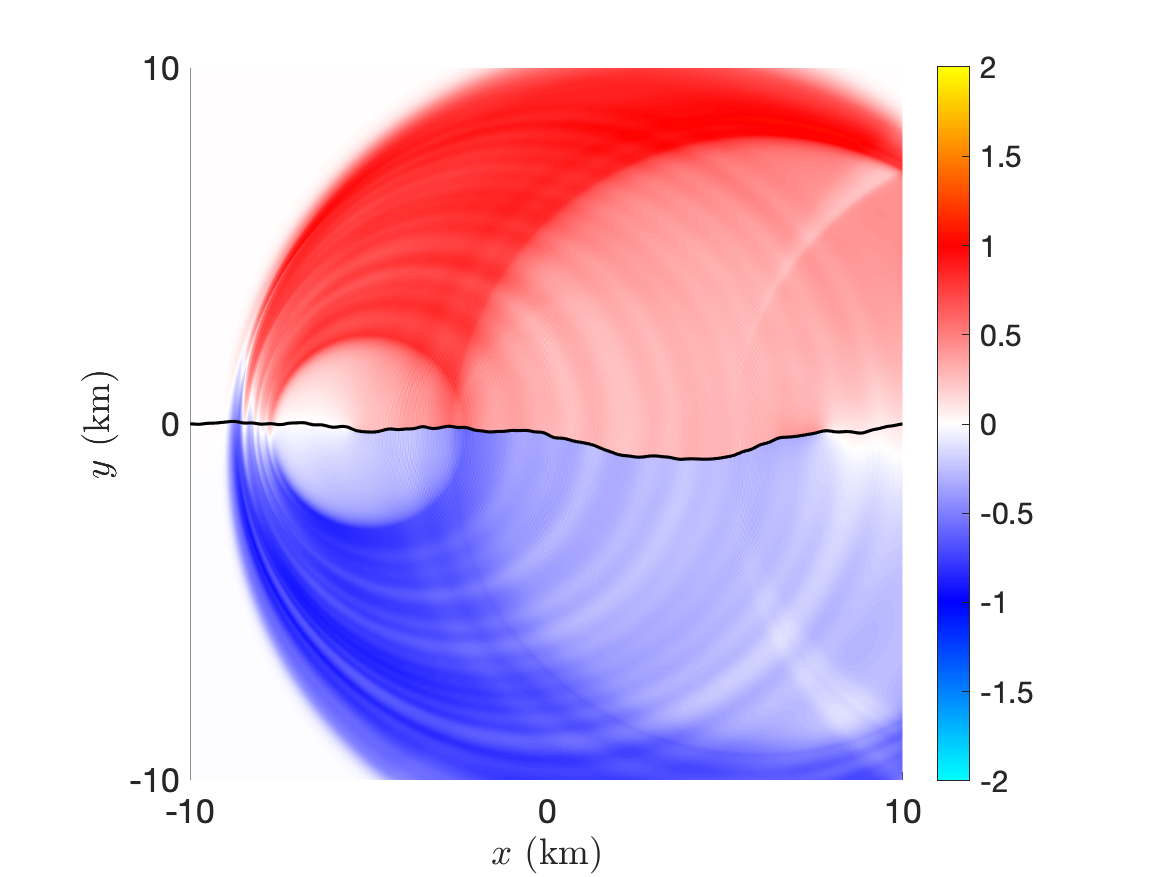}
      \caption{$t=5.00$ s}
  \end{subfigure}
     \caption{Velocity (m/s) at different points in time, showing rupture propagation and arrest.}
     \label{fig:fractal_fault_v}
\end{figure}

\begin{figure}[h!]
  \centering
\begin{subfigure}[b]{0.5325\textwidth}
  \centering
  \includegraphics[width=\textwidth]{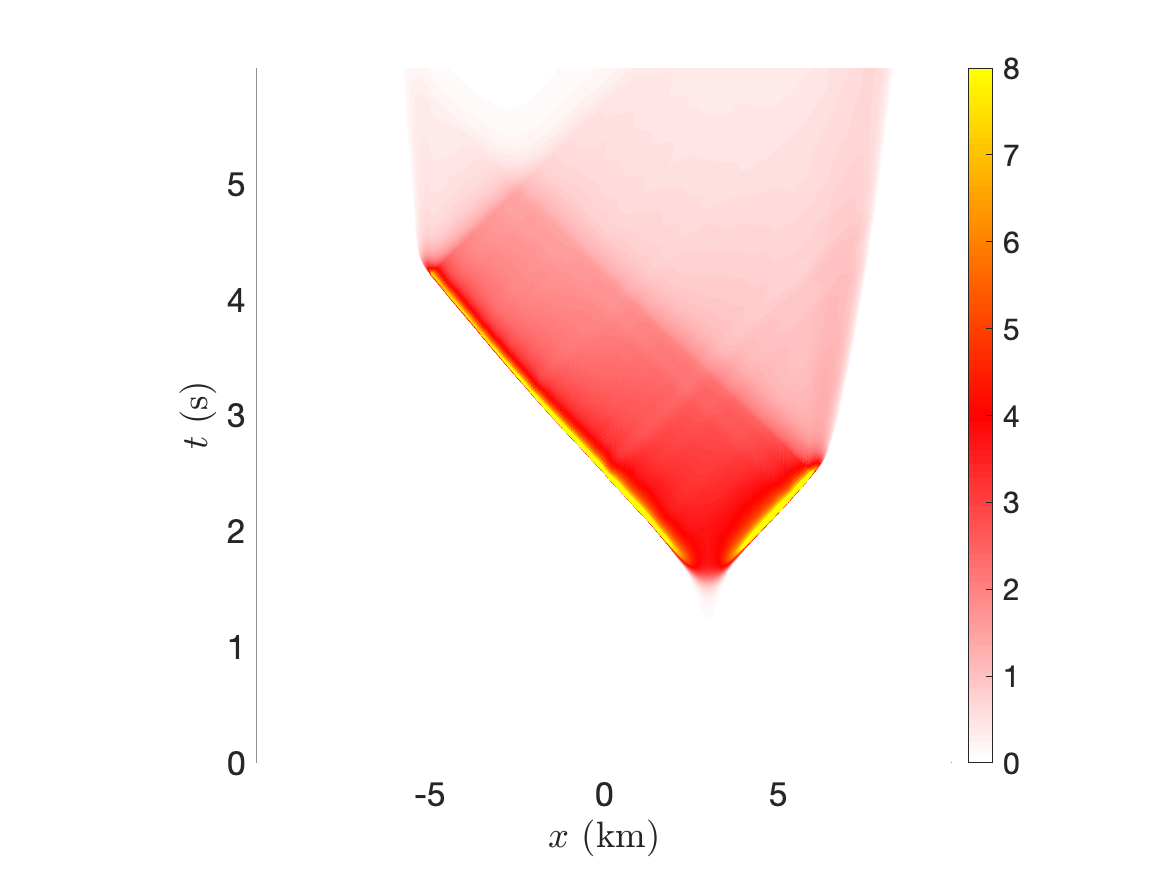}
  \caption{}
  \label{fig:fractal_fault_slip_velocity}
\end{subfigure}
\hfill
\begin{subfigure}[b]{0.4575\textwidth}
    \centering
    \includegraphics[width=\textwidth]{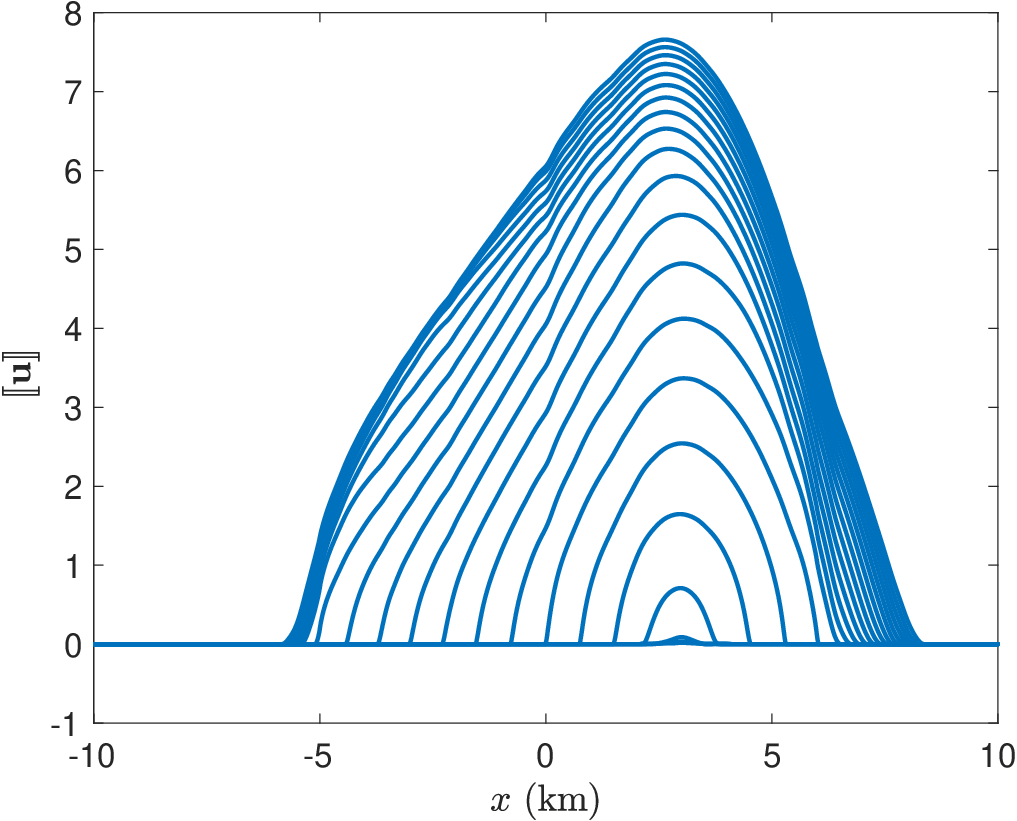}
    \caption{}
    \label{fig:fractal_fault_slip}
\end{subfigure}
  \caption{\changed{(\subref{fig:fractal_fault_slip_velocity}) Space-time plot of slip velocity $\bV^*$ (m/s). (\subref{fig:fractal_fault_slip}) Slip $\jump{\bu}$ (m) plotted every $t = 0.25$ s.}}
    \label{fig:fractal_fault_slip_and_vel}
\end{figure}
\changed{In the inversions, receivers are placed in a cut-out rectangle such that the region closest to the VW part of the fault is excluded. An example of $N_{rec} = 88$ receivers placed in the cut-out rectangle with outer boundary $-9 \le x \le 9$, $-9 \le y \le 9$, and inner boundary $-7 \le x \le 7$, $-3 \le y \le 3$, spaced 2 km apart, is shown in Figure \ref{fig:receivers}.} \reviewerTwo{In Figure \ref{fig:seismograms} an example of recorded synthetic velocity data from the high-resolution simulation described above is shown, with receivers ordered according to the radial distance $r$ to the hypocenter $\bar{x}_c$. From Figure \ref{fig:seismograms} it is clear that the wavefield is densely sampled and that the receivers record the radial moveout of the high-amplitude waves generated as the rupture front expands.} 
\begin{figure}[h!]
  \centering
  \begin{subfigure}[b]{0.4\textwidth}
      \centering
      \includegraphics[width=\textwidth]{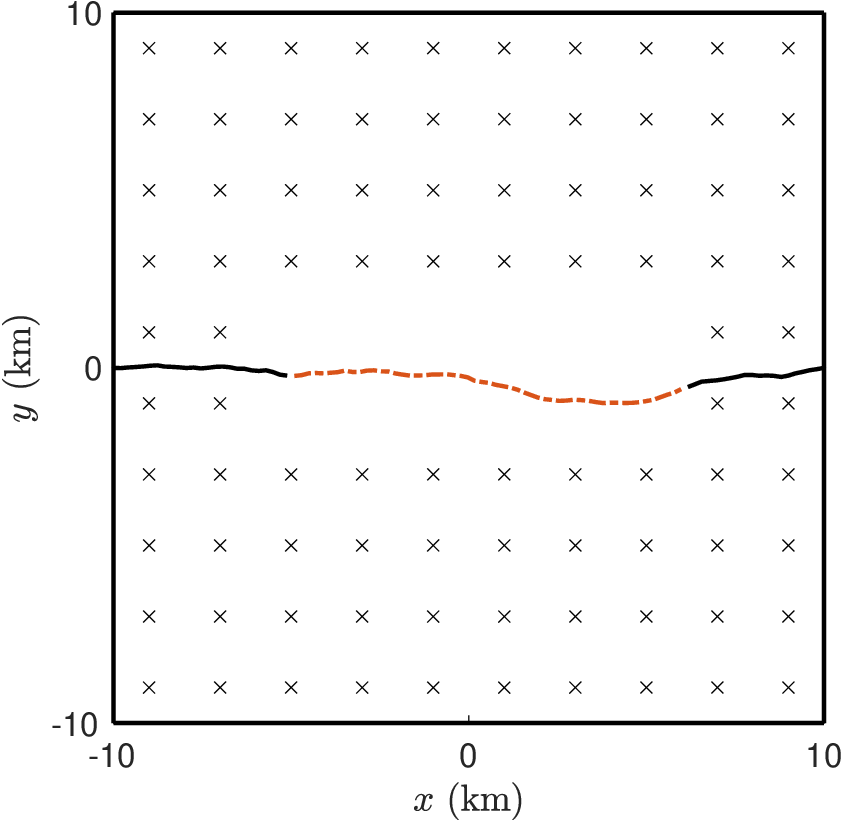}
      \caption{}
      \label{fig:receivers}
  \end{subfigure}
  \hfill
  \begin{subfigure}[b]{0.56\textwidth}
      \centering
      \includegraphics[width=\textwidth]{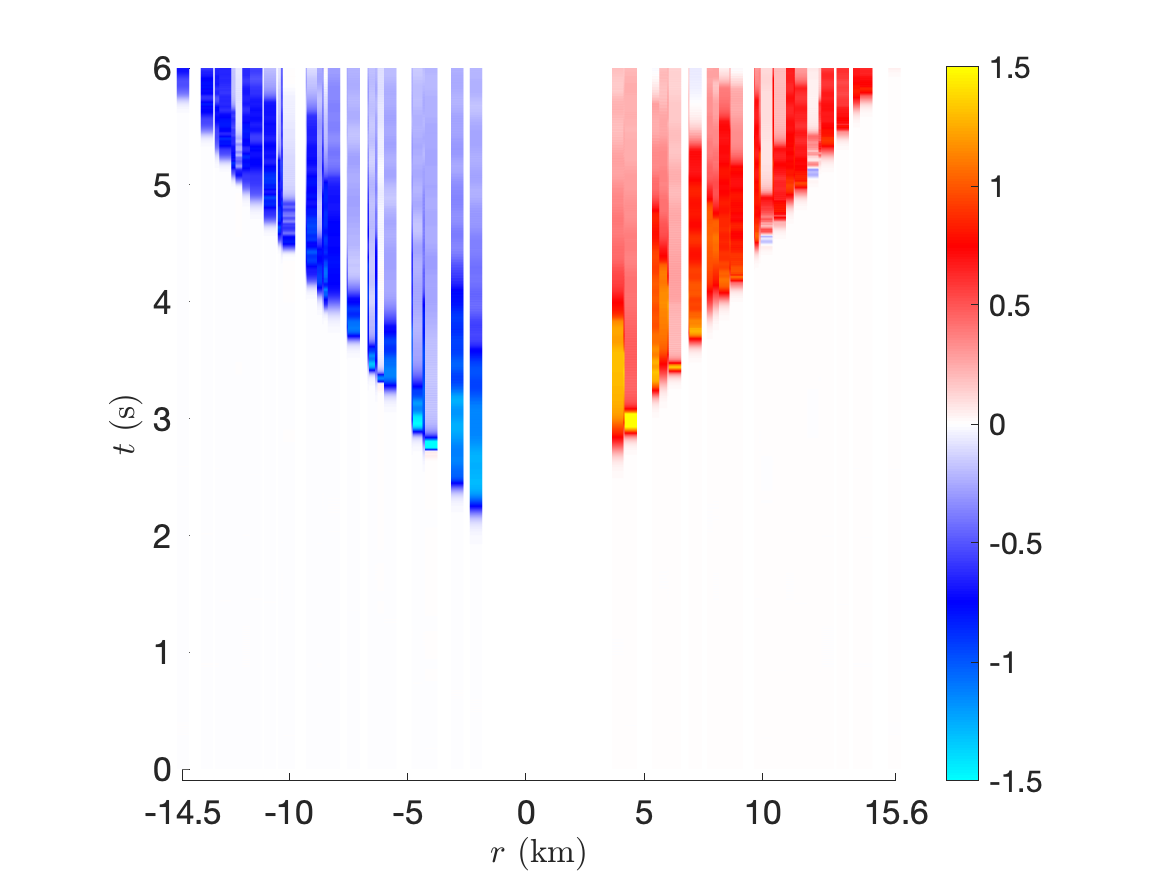}
      \caption{}
      \label{fig:seismograms}
  \end{subfigure}
  \caption{\changed{
    (\subref{fig:receivers}) Position of $N_{rec} = 88$ receivers spaced 2 km apart ($\times$). Velocity-weakening part of fault (-\,-).}
    \reviewerTwo{(\subref{fig:seismograms}) Synthetic velocity data (m/s) at receivers, ordered according to the radial distance $r$ between the receiver and the hypocenter $\bar{x}_c$. $r < 0$ refers to receivers in $\Omega^-$ while $r > 0$ to receivers in $\Omega^+$.}}
   \label{fig:fractal_fault_data}
\end{figure}
% \begin{figure}[h!]
%   \begin{center}
%     \includegraphics[width=0.5\textwidth]{fig/slip_history.eps}
%   \end{center}
%   \caption{Slip $\jump{\bu}$ (m) plotted every $t = 0.25$ s.}
%      \label{fig:fractal_fault_slip}
% \end{figure}

\subsubsection{Verification of discrete gradient}\label{sec:gradient_verification}
To verify Theorem \ref{thm:disc_grad} we compare with a first-order finite difference approximation of the gradient. Consider the parameter-normalized norm
\reviewerTwo{\begin{equation}
  \|\bv \|_{\bp} = \|\dbar{\bp}^{-1}\bv\|_\infty,
\end{equation}
where $\bv$ is a fault grid function on the parameter grid (\ie, of size $m_p \times 1$). We then define the relative error in the gradient as}
\begin{equation}
  e(\Delta p) = \frac{\|\frac{\partial \bm{\F}}{\partial \bp} - \bD_+\bm{\F}(\Delta p)\|_{\bp}}{\|\frac{\partial \bm{\F}}{\partial \bp}\|_{\bp}},
\end{equation}
where $\frac{\partial \bm{\F}}{\partial \bp}$ is the adjoint-based gradient and $\bD_+\bm{\F}(\Delta p )$ is the first-order finite difference approximation, with the $i$th component given by
\begin{equation}\label{eq:}
  \left(\bD_+\bm{\F}(\Delta p )\right)_i = \frac{\bm{\F}(\bp + \Delta p\be_i) - \bm{\F}(\bp)}{\Delta p}.
\end{equation}
Here, $\be_{i}$ is a vector with a 1 at the $i$th entry and zeros elsewhere, \reviewerTwo{such that $\bD_+\bm{\F}(\Delta p )$ is computed by individually perturbing each element in $\bp$ by $\Delta p$}. Synthetic data is generated up to $T=6$ s using the parameter values in Table \ref{table:parameters}, and discretization parameters $m = 101$,  $m_p = 11$ and $\Delta t = 0.005$ s. Note that the grid is too coarse for accurate dynamic rupture simulations but the setup is still useful in verifying that the correct discrete gradient is obtained since any approximation errors in the gradient should be clearly visible. To interpolate between the computational and the parameter grid we construct interpolation operators based on the 8th-order accurate boundary-optimized norm, and the second-order accurate traditional SBP norm on an equispaced grid. This means that the interpolation is second-order accurate. The reason for using a lower-order accurate interpolation is that it reduces oscillations when interpolating non-smooth parameters. \changed{$N_{rec} = 88$ receivers are used, positioned as illustrated in Figure \ref{fig:receivers}}.

% \begin{figure}[h!]
%   \begin{center}
%     \includegraphics[width=0.5\textwidth]{fig/receivers.eps}
%   \end{center}
%   \caption{Position of $N_{rec} = 88$ receivers spaced 2 km apart ($\times$). Velocity weakening part of fault (-\,-).}
%      \label{fig:receivers}
% \end{figure}

\reviewerTwo{Initializing $\mathbf{a}$ from $1.1a(\bar{x})$ as given by Table \ref{table:parameters}}, the error $e(\Delta a)$ is then computed for $\Delta a$ decreasing from $10^{-5}$ to $10^{-12}$. The results are presented in Figure \ref{fig:grad_conv_fractal_fault} where it can be observed that $e(\Delta a)$ decreases with a first-order rate with decreasing $\Delta a$ until $e(\Delta a) \approx 1.8\changed{\times}10^{-6}$, at $\Delta a = 10^{-8.5}$ with a displacement misfit and $e(\Delta a) \approx 1.4\changed{\times10^{-5}}$, at $\Delta a = 10^{-9}$ with a velocity misfit. At these points, cancellation errors in the first-order approximation $\bD_+\bm{\F}(\Delta a)$ take effect after which $e(\Delta a)$ increases linearly (although oscillatory) as $\Delta a$ is further decreased. Note that if \changed{$\frac{\partial \bm{\F}}{\partial \mathbf{a}}$} included an approximation error \changed{$\varepsilon$} then the error curve would become horizontal at $e(\Delta a) = \varepsilon$\changed{, provided $\varepsilon$ is larger than cancellation errors in $\bD_+\bm{\F}(\Delta a)$}. Since no such piecewise constant parts of the error curves are observed, we conclude that \eqref{eq:anti_plane_shear_fully_discr_grad} indeed is the gradient of $\bm{\F}$, and that errors in computing $\bV^*$ are negligible.

\begin{figure}[h!]
    \centering
    \includegraphics[width=0.5\textwidth]{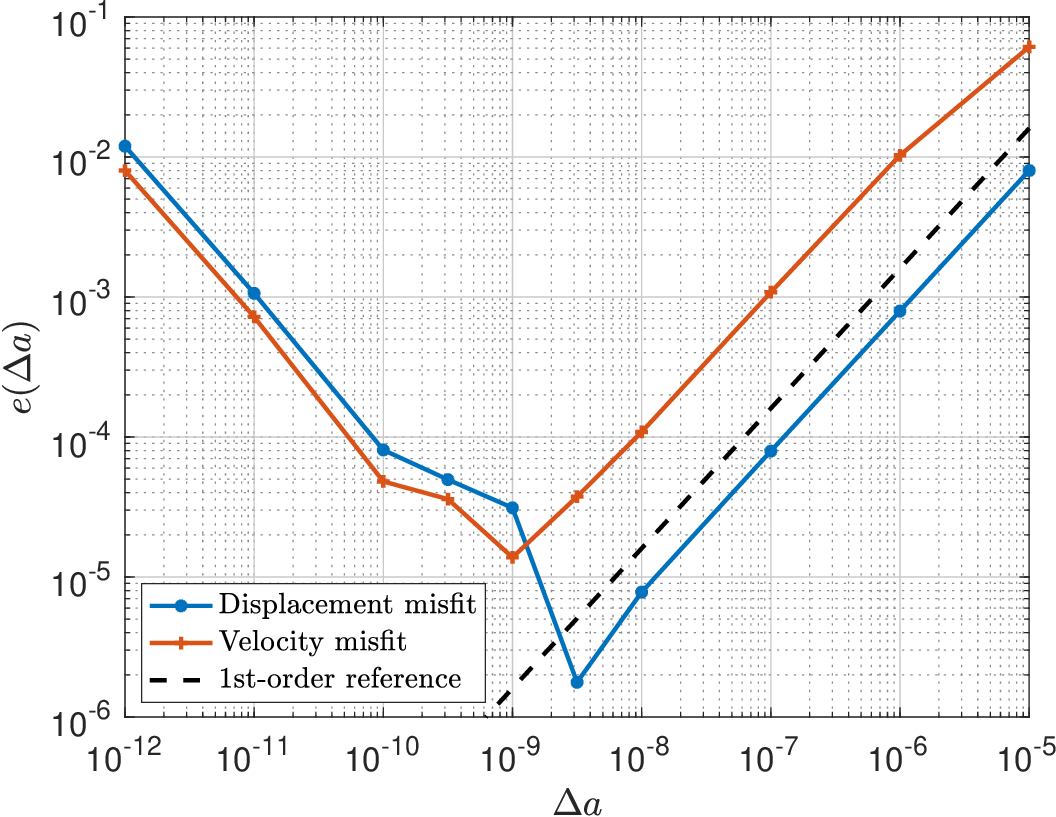}
    \caption{Relative error $e(\Delta a)$ between the adjoint-based gradient and the \changed{first-order finite difference} gradient, as a function of $\Delta a$.}
    \label{fig:grad_conv_fractal_fault}
\end{figure}

\subsubsection{Inverse crimes}\label{sec:inverse_crimes}
We now proceed to perform inversions in an inverse crime setting, i.e., we aim to reconstruct parameters based on synthetic \changed{velocity} data generated from a forward simulation using the same spatio-temporal resolution. There are no model errors or differences in the fault geometry, boundary conditions, or problem parameters (except for the parameters selected as model parameters in the inversions). The wavefield is densely sampled. Thus these inversions are not representative of conditions that might be encountered in real-world applications; instead they serve to test our methodology and to demonstrate the possibility of adjoint-enabled gradient-based optimization for this class of problems. We also perform inversions for a single parameter (i.e., the direct effect parameter \reviewerTwo{$a(\bar{x})$} or the initial shear stress \reviewerTwo{$\tau^0(\bar{x}$)}) rather than doing inversions for multiple parameters simultaneously. This avoids having to develop regularization schemes to handle parameter trade-offs that will inevitably arise in real-world applications.

The optimization problem \eqref{eq:anti_plane_shear_fully_disc_opt} is solved using the quasi-Newton limited-memory Broyden-Fletcher-Goldfarb-Shanno (L-BFGS) algorithm provided by the Matlab function \texttt{fmincon}. The forward and adjoint solves are performed using $m = 251$ and $\Delta t = 0.003$ s, with $m_p$ varying depending on the parameter considered for inversion. As before, synthetic data are generated using the parameters in Table \ref{table:parameters} as the true values.

Inverting for the direct effect parameter \reviewerTwo{$a(\bar{x})$} we set $m_p = 26$, \reviewerTwo{and initialize $\mathbf{a}$ identically to 0.0135}, which corresponds to a 50\% error in the VW region. This choice of initial \reviewerTwo{$\mathbf{a}$} makes the entire fault velocity-strengthening, so in the initial iteration, rupture immediately arrests. \changed{Again, the $N_{rec} = 88$ receiver setup illustrated in Figure \ref{fig:receivers} is used}. Figure \ref{fig:inverse_crime_a} presents $\mathbf{a}$ plotted for different iterations; the value of \reviewerTwo{$\mathbf{b}$} is also shown to help delimit VW and VS parameter values. \changed{After the first 5 iterations, we observe that $\mathbf{a}$ has been decreased close to its true value at the hypocenter $x_c = 3$ km. This allows for the earthquake to nucleate, radiating waves and thereby reducing the misfit for first-arrival waves. However, since $\mathbf{a}-\mathbf{b} > 0$ outside of the hypocenter, the rupture will not propagate. In subsequent iterations, $\mathbf{a}$ is adjusted to account for the spatial extent of the rupture.} After 200 iterations, $\mathbf{a}$ has decreased close to its true value, such that the fault becomes VW within the central region where slip is nonzero (Figure \ref{fig:fractal_fault_slip_and_vel}). Note that the difference in $\mathbf{a}$ in the first and final iterations is negligible in the VS parts of the fault. This part of the fault started VS, so that \changed{the} rupture arrests there, and no adjustments are required for consistency with the seismograms at the receivers because without slip this part of the fault does not radiate. Thus there is a very low sensitivity (small value of \changed{$\frac{\partial\bm{\F}}{\partial\mathbf{a}})$} in the VS region. In contrast, matching the timing and amplitude of wave arrivals from the interior slipped part of the fault is only possible when \reviewerTwo{$\mathbf{a}$} is close to its true value.

\begin{figure}[h!]
  \centering
  \begin{subfigure}[b]{0.49\textwidth}
      \centering
      \includegraphics[width=\textwidth]{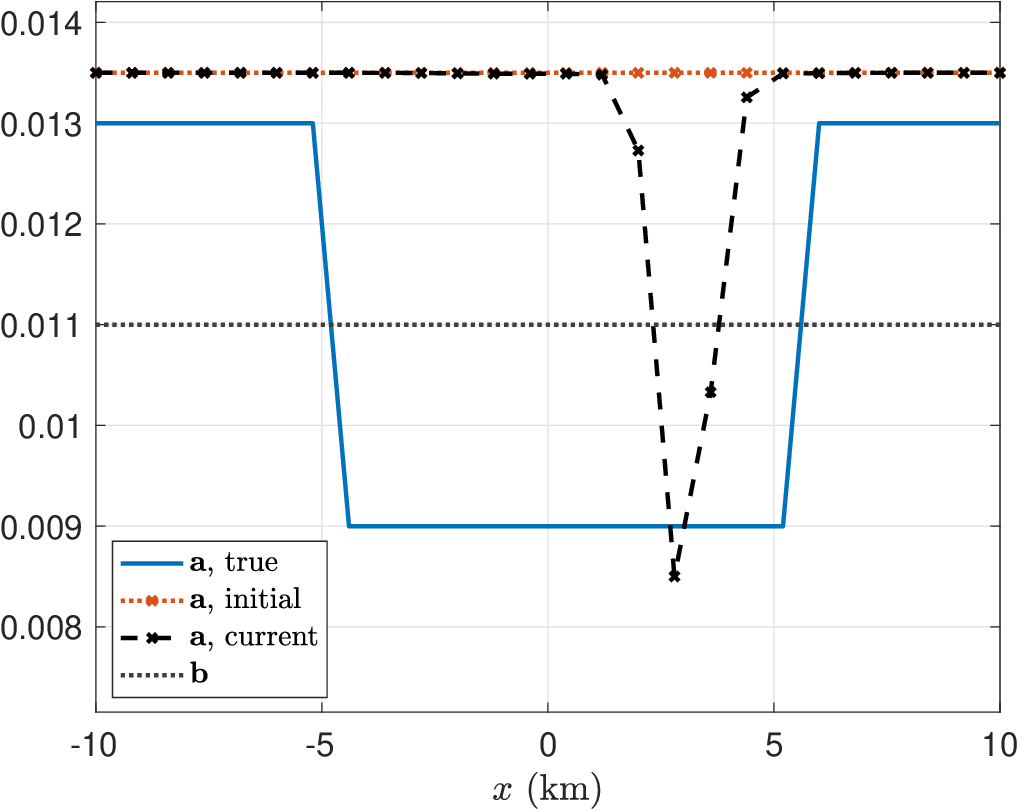}
      \caption{\changed{5} iterations}
  \end{subfigure}
  \hfill
  \begin{subfigure}[b]{0.49\textwidth}
      \centering
      \includegraphics[width=\textwidth]{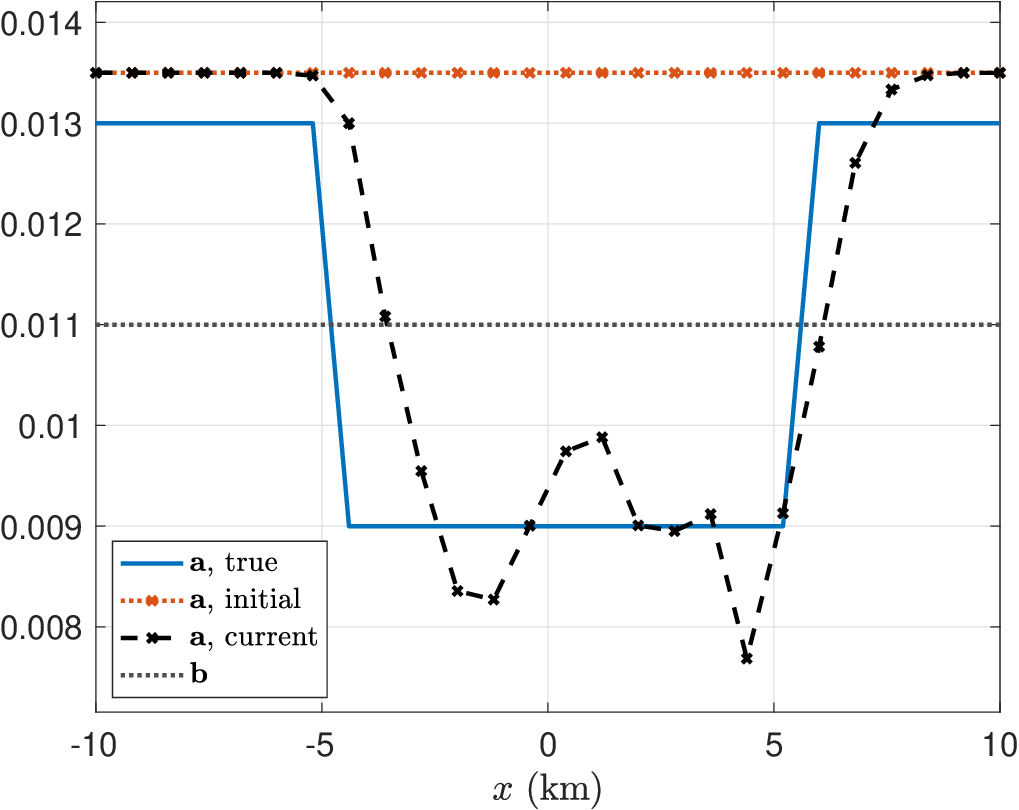}
      \caption{\changed{50} iterations}
  \end{subfigure}
  \\
  \begin{subfigure}[b]{0.49\textwidth}
    \centering
    \includegraphics[width=\textwidth]{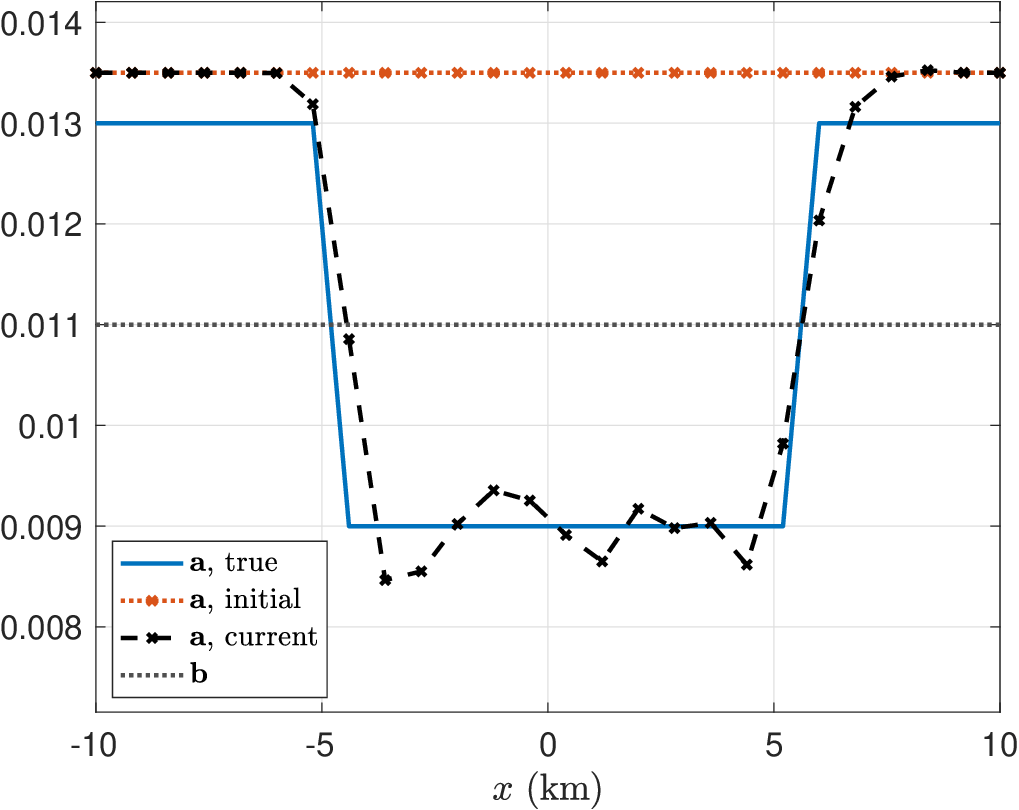}
    \caption{\changed{100} iterations}
  \end{subfigure}
  \hfill
  \begin{subfigure}[b]{0.49\textwidth}
      \centering
      \includegraphics[width=\textwidth]{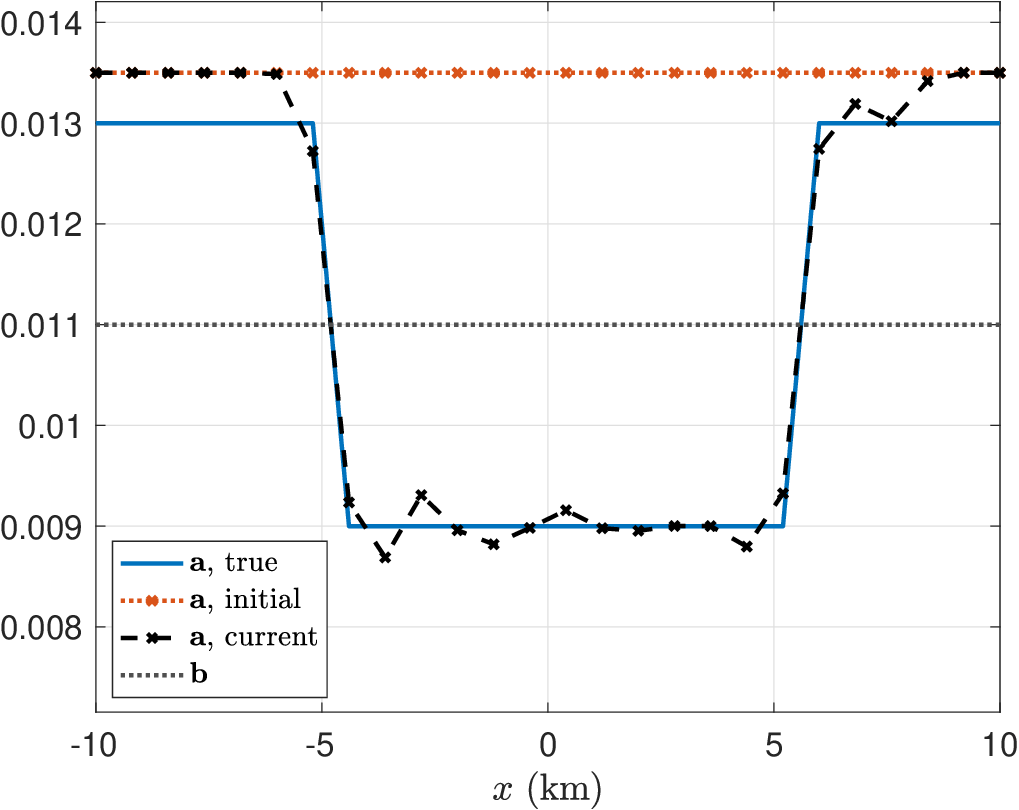}
      \caption{200 iterations}
  \end{subfigure}
  \caption{Direct effect parameter $\mathbf{a}$ at different iterations for $m_p = 26$.}
  \label{fig:inverse_crime_a}
\end{figure}

To illustrate the effect of the parameter resolution $m_p$ on the optimization procedure, we perform the same inversion using $m_p = 51$ and $m_p = 101$. The computed values of $\mathbf{a}$ after 200 iterations, and the misfit histories are presented in Figures \ref{fig:inverse_crime_a_cmp_mp}. Increasing the resolution on the parameter grid, setting $m_p = 51$, we see larger oscillations about the hypocenter $x_c$. This shows that reducing the discrete parameter space may be beneficial for faster convergence of the misfit, and for reducing the number of local minima. The oscillations around the hypocenter observed for higher $m_p$ could likely be remedied by introducing regularization to the misfit, for instance by penalizing large gradients in $\mathbf{a}$. Finding a suitable regularization is however out of the scope of this paper.

\begin{figure}[h!]
  \centering
  \begin{subfigure}[b]{0.49\textwidth}
      \centering
      \includegraphics[width=\textwidth]{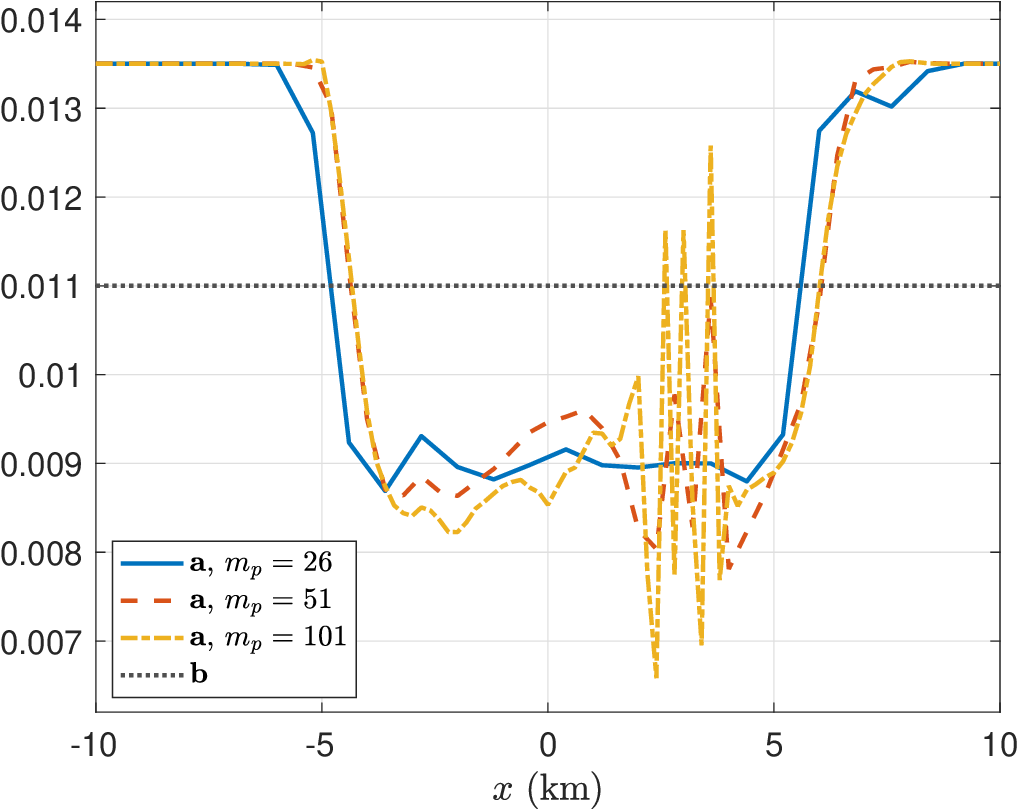}
      \caption{200 iterations}
  \end{subfigure}
  \hfill
  \begin{subfigure}[b]{0.49\textwidth}
      \centering
      \includegraphics[width=\textwidth]{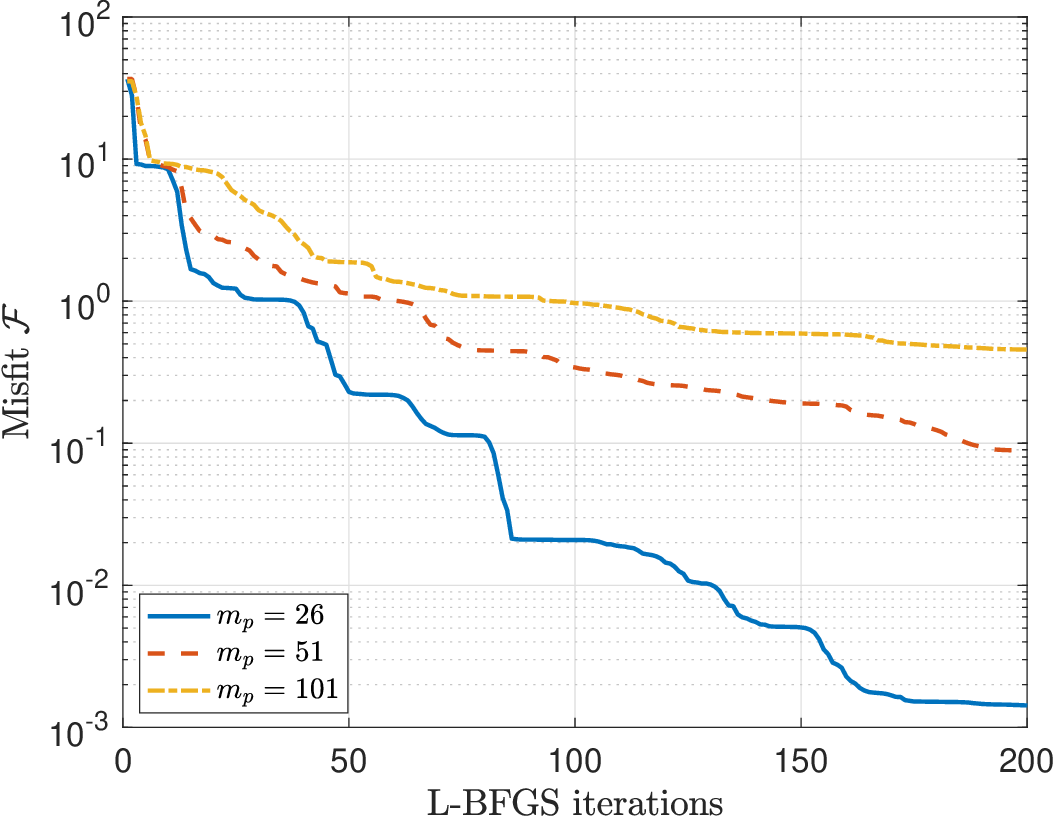}
      \caption{Misfit history}
  \end{subfigure}
  \caption{Direct effect parameter $\mathbf{a}$ and misfit history for different $m_p$.}
\label{fig:inverse_crime_a_cmp_mp}
\end{figure}

Next, we invert for the shear traction \reviewerTwo{$\tau^0(\bar{x}) = \sigma_{yz}^0 \hat{n}^-_y(\bar{x})$}, setting $m_p = 51$, and starting from \changed{$\btau^0$} given by \changed{$\sigma_{yz}^0 = 68$} MPa corresponding to approximately 5\% initial error. The smaller error in the initial guess is due to inversions of \reviewerTwo{$\tau^0(\bar{x})$} proving to be significantly more difficult for this problem setup, with a higher likelihood of incorrectly converging to a local minimum. This choice of initial guess reduces the initial stress below the true value, to the point where the rupture with the initial parameter values quickly arrests. The inversion must therefore increase \reviewerTwo{$\btau^0$} to produce a propagating rupture. Two inverse-crime inversions are performed, one with the receiver distribution in Figure \ref{fig:receivers} and one where the number of receivers is increased by placing them 1 km apart and decreasing the inner bounding box to  $-6 \le x \le 7$, $-2 \le y \le 2$, for distances in km, totaling $N_{rec} = 325$. In Figure \ref{fig:inverse_crime_tau0}\changed{, $\btau^0$} obtained with $N_{rec} = 88$ is plotted for different iterations. \changed{Again, the first 10 iterations adjust the parameter at the hypocenter, while subsequent iterations adjust for the spatial extent of the rupture. However, in this case, the} optimization stagnates at a local minimum after about \changed{100} iterations\changed{, as illustrated by the misfit history shown in Figure \ref{fig:inverse_crime_tau0_misfit}}. A comparison of the results at 200 iterations with $N_{rec} = 88$ and $N_{rec} = 325$ is shown in Figure \ref{fig:inverse_crime_tau0_cmp_rec}. The result using $N_{rec} = 325$  matches the true value well in the VW region. The increased receiver density as well as having receivers closer to the fault provides more constraints on the shorter wavelength and shorter timescale parts of the rupture process, which manifests as improved resolution of \reviewerTwo{$\btau^0$}.

\begin{figure}[h!]
  \centering
  \begin{subfigure}[b]{0.49\textwidth}
      \centering
      \includegraphics[width=\textwidth]{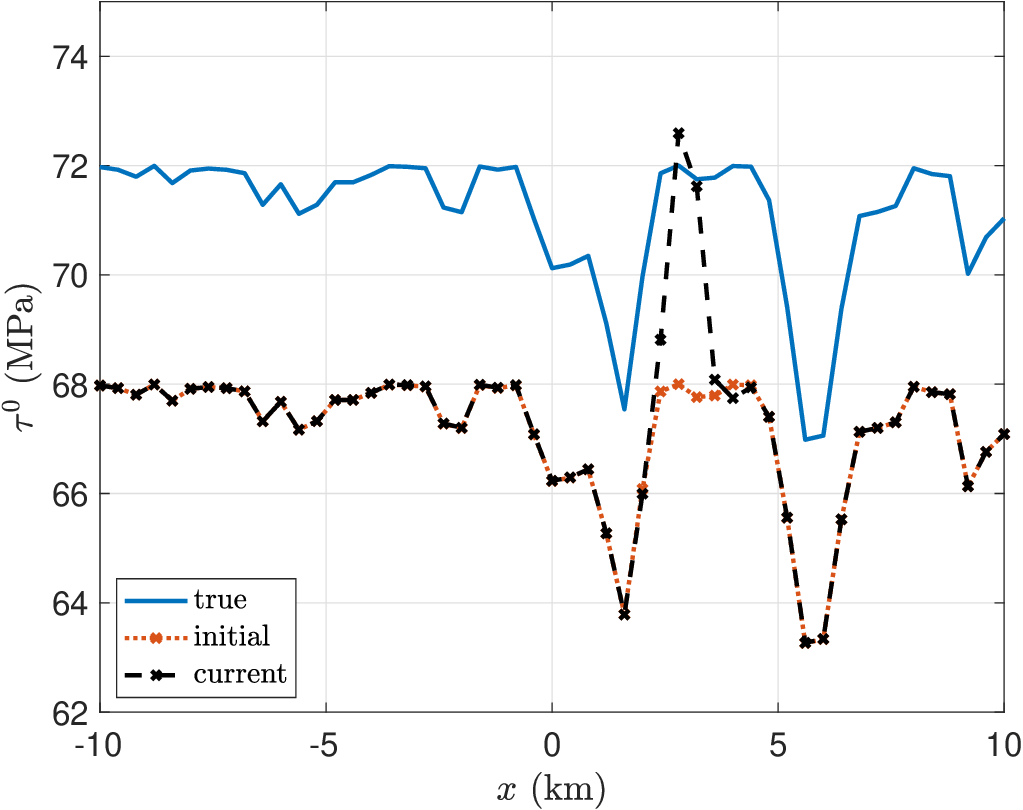}
      \caption{\changed{10 iterations}}
  \end{subfigure}
  \hfill
  \begin{subfigure}[b]{0.49\textwidth}
      \centering
      \includegraphics[width=\textwidth]{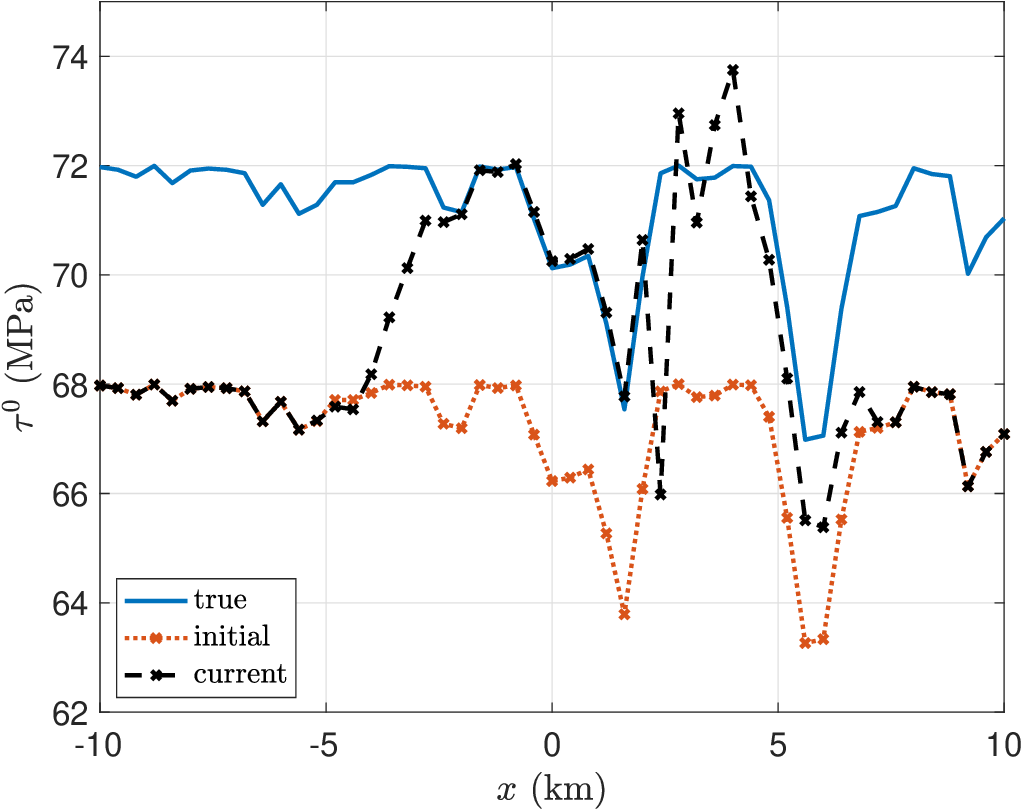}
      \caption{\changed{40 iterations}}
  \end{subfigure}
  \\
  \begin{subfigure}[b]{0.49\textwidth}
    \centering
    \includegraphics[width=\textwidth]{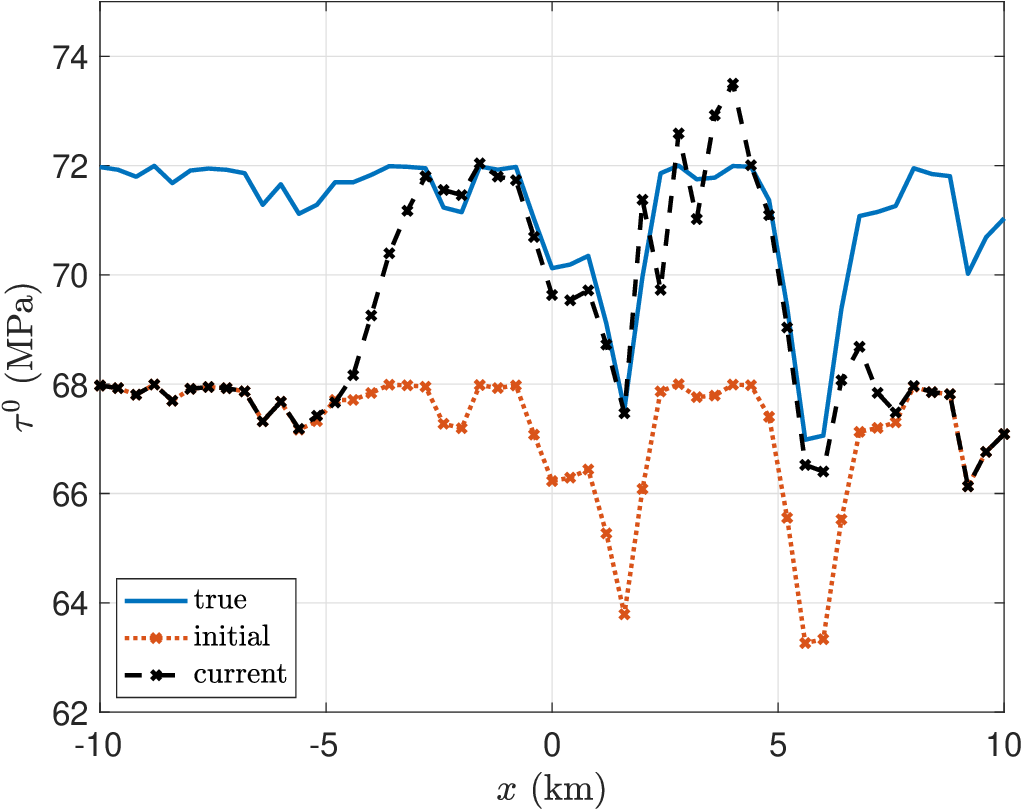}
    \caption{\changed{80 iterations}}
  \end{subfigure}
  \hfill
  \begin{subfigure}[b]{0.49\textwidth}
      \centering
      \includegraphics[width=\textwidth]{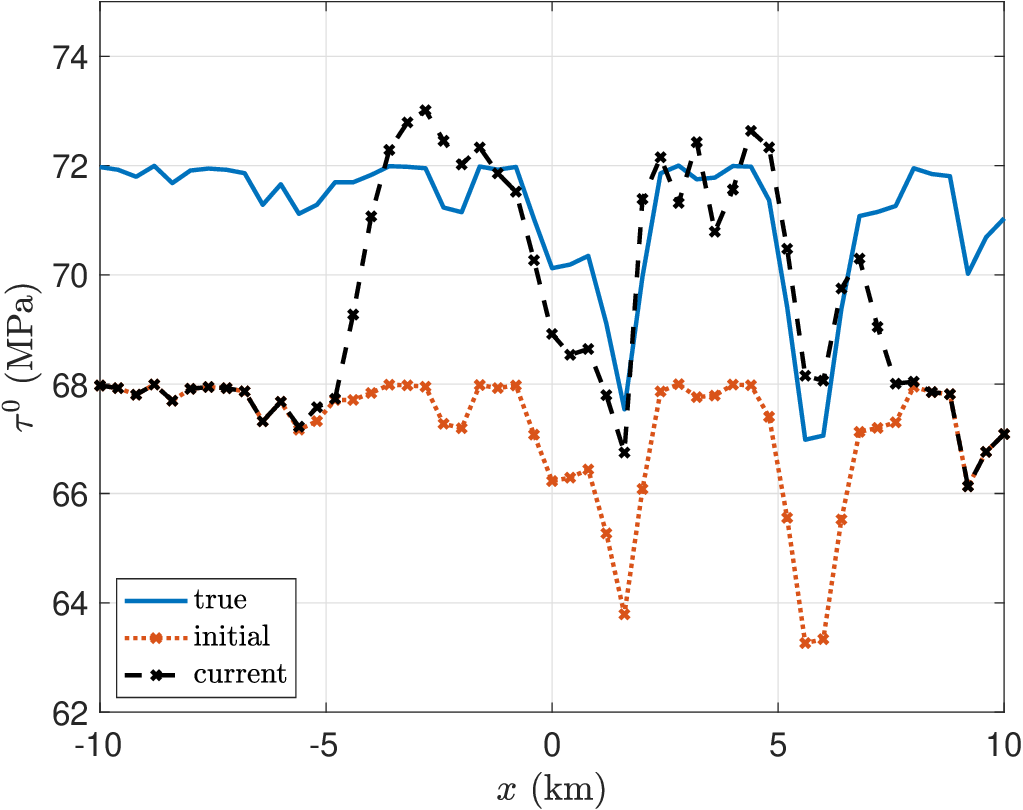}
      \caption{200 iterations}
  \end{subfigure}
  \caption{Shear traction \changed{$\btau^0$} at different iterations using $N_{rec} = 88$.}
  \label{fig:inverse_crime_tau0}
\end{figure}

% \begin{figure}[h!]
%   \begin{center}
%     \includegraphics[width=0.5\textwidth]{}
%   \end{center}
%   \caption{Position of $N_{rec} = 325$ receivers spaced 1 km apart ($\times$). Velocity weakening part of fault (-\,-).}
%      \label{fig:receivers_dense}
% \end{figure}

\begin{figure}[h!]
  \begin{subfigure}[b]{0.49\textwidth}
    \centering
    \includegraphics[width=\textwidth]{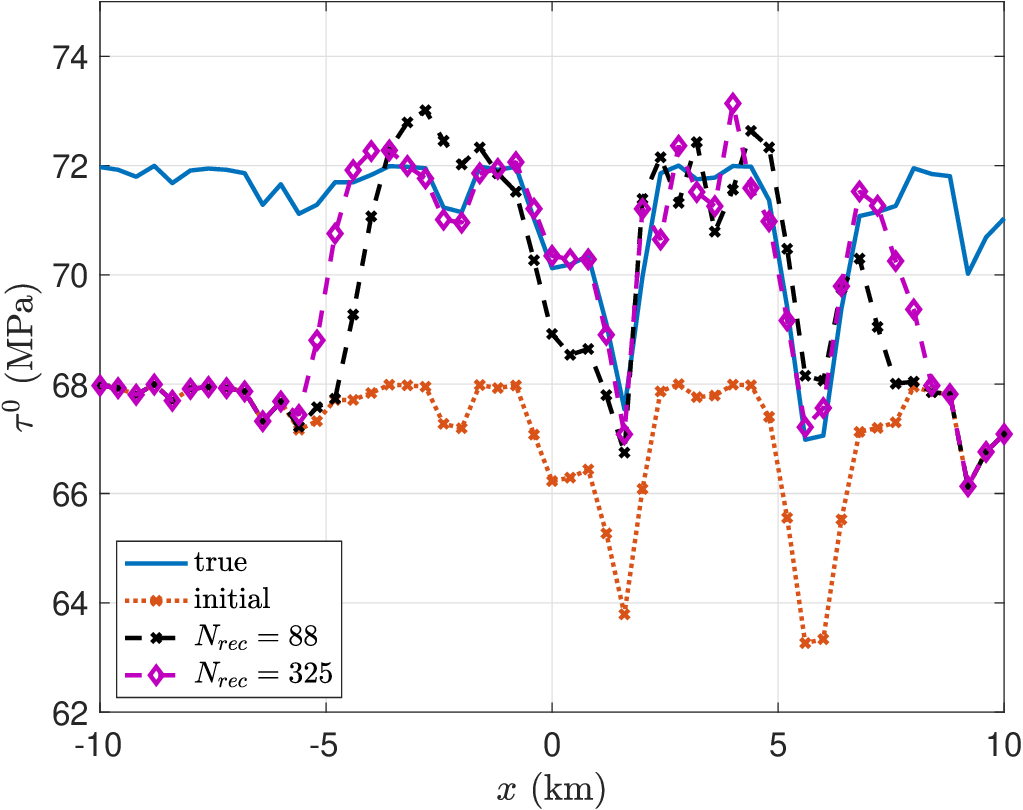}
    \caption{\changed{$\btau^0$} at 200 iterations.}
  \end{subfigure}
  \hfill
  \begin{subfigure}[b]{0.49\textwidth}
      \centering
      \includegraphics[width=\textwidth]{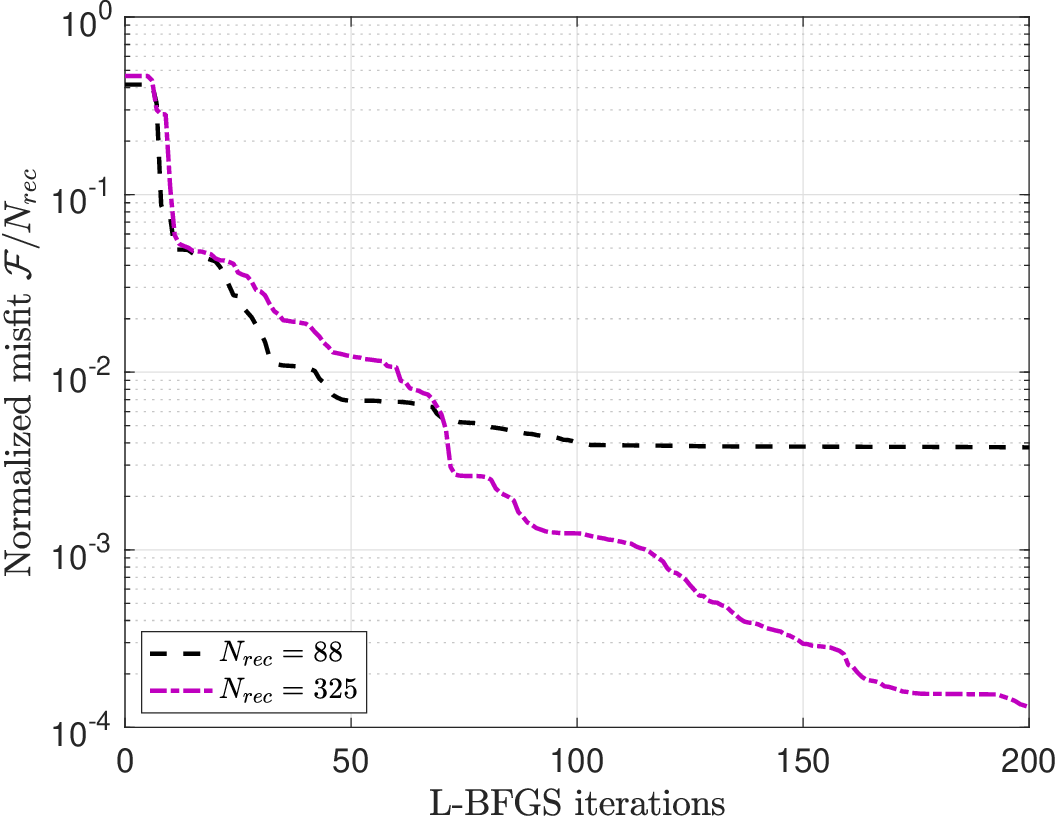}
      \caption{Normalized misfit $\bm{\F}/N_{rec}$ history.}
      \label{fig:inverse_crime_tau0_misfit}
  \end{subfigure}
  \caption{Comparison of shear traction \changed{$\btau^0$} at 200 iterations for $N_{rec} = 88$ and $N_{rec} = 325$.}
  \label{fig:inverse_crime_tau0_cmp_rec}
\end{figure}

\begin{remark}
  Numerical experiments (not presented here) indicate that the \changed{inversion} is more robust when \changed{initializing the inverted parameter such that no nucleation occurs. As observed, this causes the inversion to adjust the parameter to create radiation in order to reduce the misfit, which is done by first adjusting the parameter at the hypocenter and subsequently for the full spatial extent of the rupture. Conversely, starting from parameters causing truly stable parts of the fault to rupture, \eg, }if initially \reviewerTwo{$\mathbf{a} - \mathbf{b} < 0$} in parts of the fault that truly are VS, the \changed{inversion} is very likely to converge to an incorrect local minimum.
\end{remark}

\subsubsection{Inversion using high-resolution synthetic data}\label{sec:high_resolution_data}
Slightly increasing the difficulty of the optimization problem, we perform inversions using the same setups as in Section \ref{sec:inverse_crimes}, but use high-resolution \changed{velocity} data generated from the forward simulation presented at the beginning of this section (illustrated in \changed{Figures \ref{fig:fractal_fault_v} - \ref{fig:fractal_fault_data}}). In this setting, we effectively have no modeling errors (other than the perturbed parameters considered for inversion), but the data contain features that are not resolvable on the computational grid. When inverting for \reviewerTwo{$a(\bar{x})$} we use $m_p = 26$ and $N_{rec} = 88$ receivers \changed{(\ie, data as shown in Figure \ref{fig:seismograms})}, while for \reviewerTwo{$\tau^0(\bar{x})$}, we set $m_p = 51$ and $N_{rec} = 325$. The results after 200 iterations are presented in Figures \ref{fig:inverse_highres_data_a} - \ref{fig:inverse_highres_data_tau0}. In this setting, $\mathbf{a}$ oscillates around the true value in the VW region, while \changed{$\btau^0$} seems to interpolate the true value fairly well. \reviewerTwo{From the misfit in Figure \ref{fig:inverse_highres_data_a_misfit} it is clear that the optimization for $\mathbf{a}$ has stagnated at a local minimum.}

\changed{Figure \ref{fig:fractal_fault_v_inversion} shows the velocity field at $t = 5$ corresponding to: the true parameter set on (\subref{fig:fractal_fault_v_lowres}) the computational grid and (\subref{fig:fractal_fault_v_highres}) the high-resolution grid, (\subref{fig:fractal_fault_v_inverted_a}, \subref{fig:fractal_fault_v_inverted_tau0}) the inverted parameters, and (\subref{fig:fractal_fault_v_initial_a}, \subref{fig:fractal_fault_v_initial_tau0}) the initial parameters.} Comparing \changed{Figures \ref{fig:fractal_fault_v_lowres} and \ref{fig:fractal_fault_v_highres}} it is clear that spurious oscillations due to lower spatial resolution are present in the velocity fields on the computational grid. The oscillations observed in $\mathbf{a}$ in Figure \ref{fig:inverse_highres_data_a_par} are therefore likely a result of numerical errors in the forward and adjoint fields being mapped into the parameter. Increasing the number of receivers to $N_{rec} = 325$ had no significant effect, and the results are therefore omitted. To further improve the results, a combination of regularization and filtering of the misfit residual is likely needed.

\begin{figure}[h!]
  \begin{subfigure}[b]{0.49\textwidth}
    \centering
    \includegraphics[width=\textwidth]{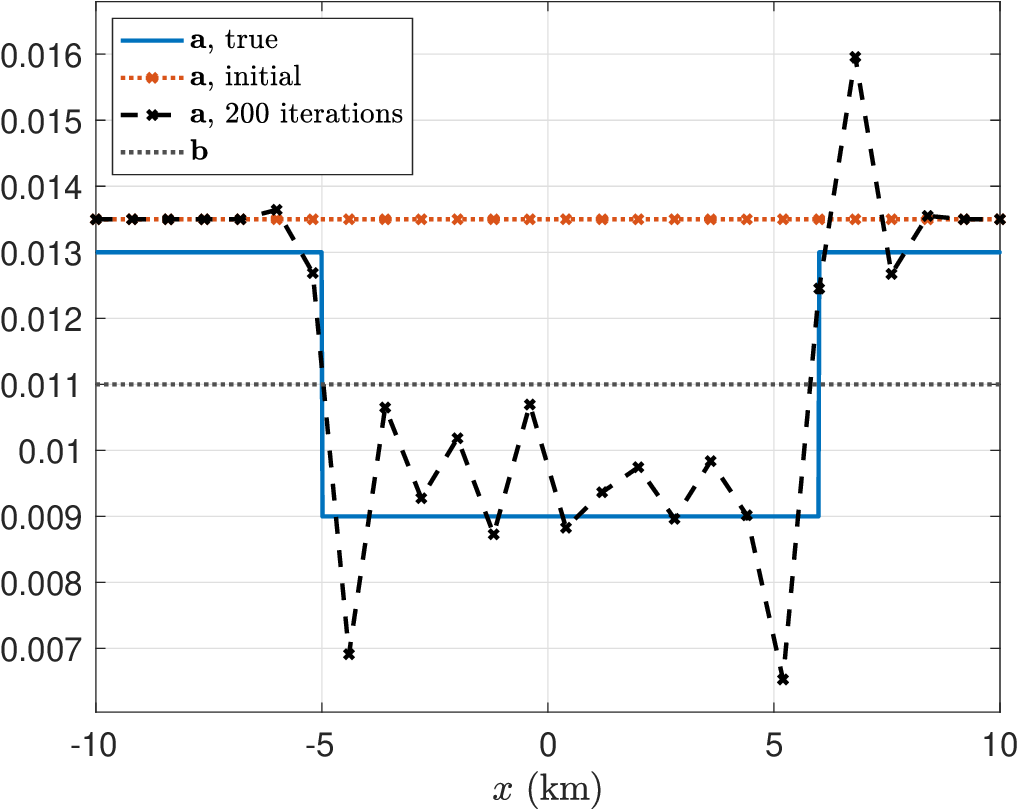}
    \caption{$\mathbf{a}$ after 200 iterations}
    \label{fig:inverse_highres_data_a_par}
  \end{subfigure}
  \hfill
  \begin{subfigure}[b]{0.49\textwidth}
      \centering
      \includegraphics[width=\textwidth]{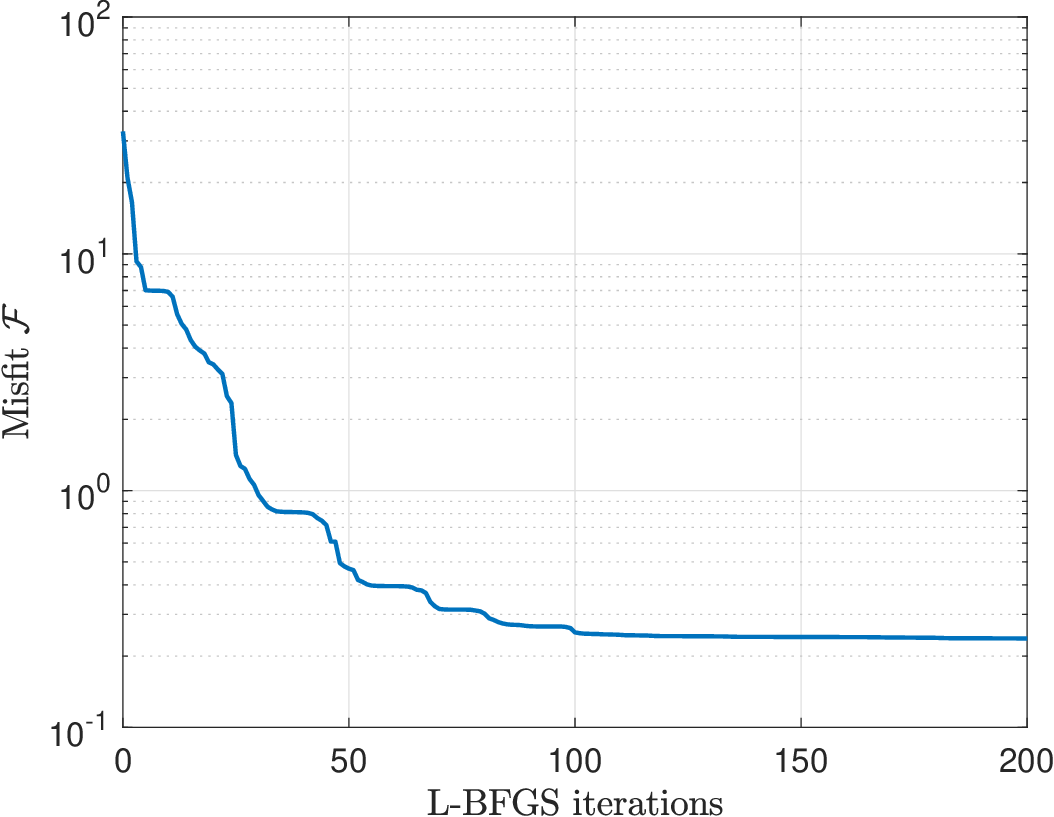}
      \caption{Misfit history}
      \label{fig:inverse_highres_data_a_misfit}
  \end{subfigure}
  \caption{Inversion for direct effect parameter $\mathbf{a}$ using high-resolution data and $N_{rec} = 88$.}
  \label{fig:inverse_highres_data_a}
\end{figure}

\begin{figure}[h!]
  \begin{subfigure}[b]{0.49\textwidth}
    \centering
    \includegraphics[width=\textwidth]{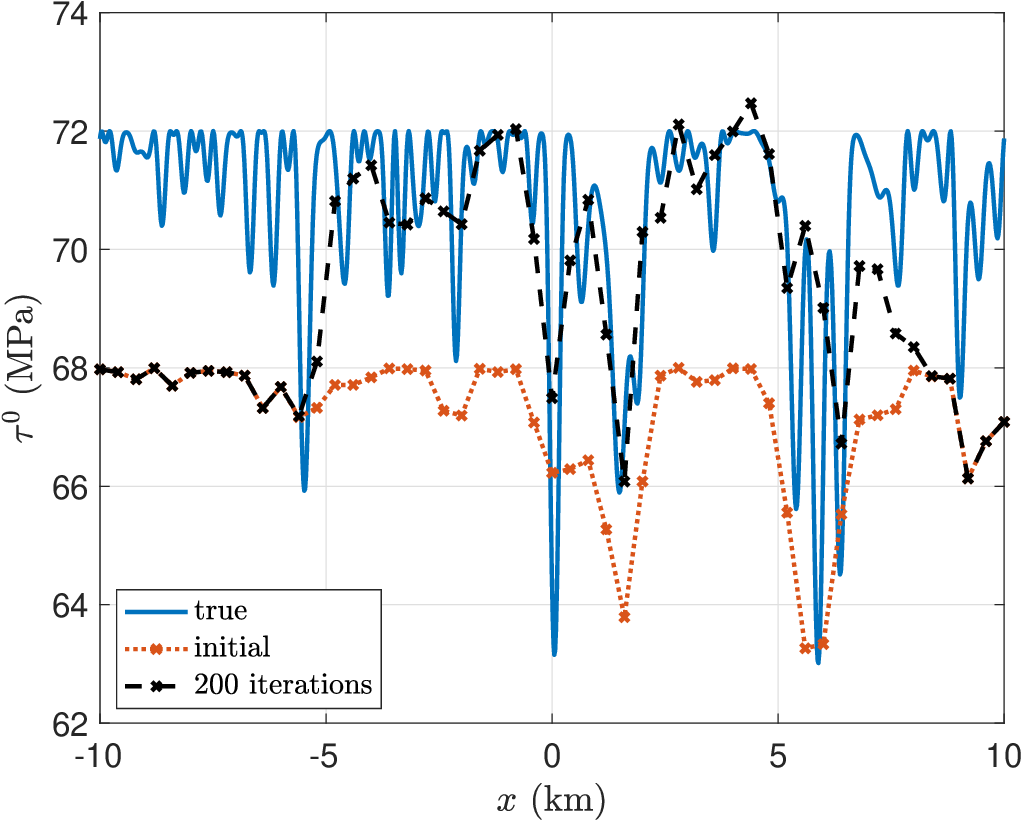}
    \caption{\changed{$\btau^0$} after 200 iterations}
  \end{subfigure}
  \hfill
  \begin{subfigure}[b]{0.49\textwidth}
      \centering
      \includegraphics[width=\textwidth]{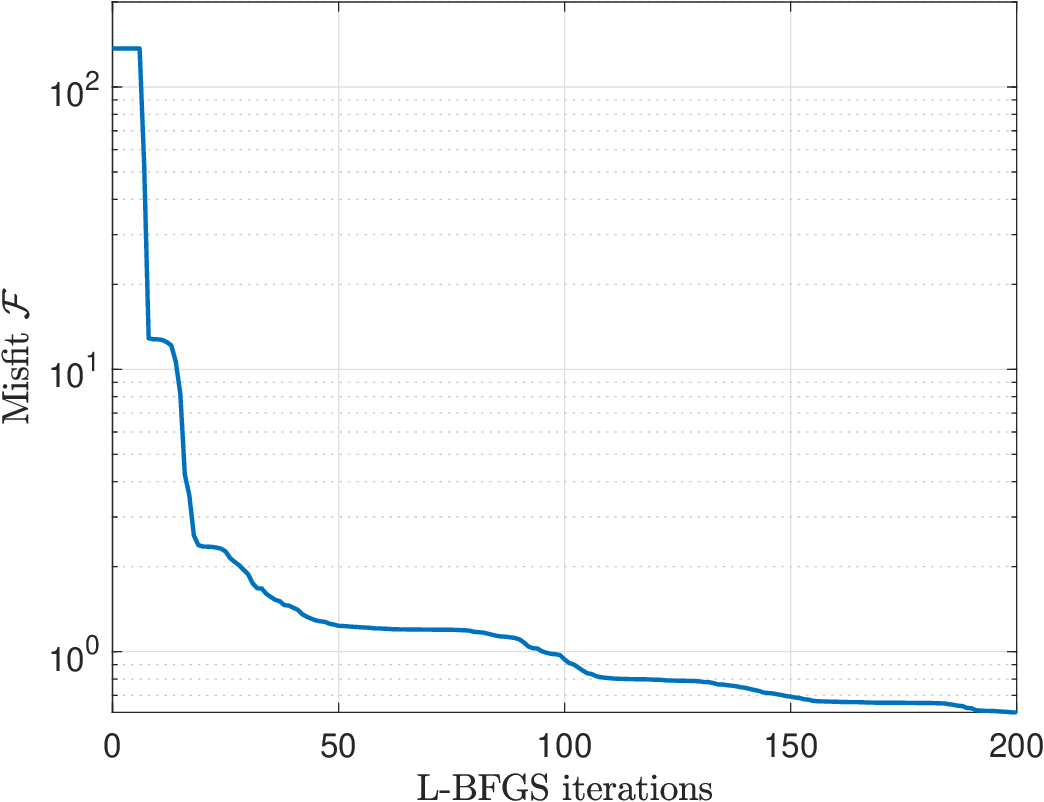}
      \caption{Misfit history}
  \end{subfigure}
  \caption{Inversion for shear traction \changed{$\btau^0$} using high-resolution data and $N_{rec} = 325$.}
  \label{fig:inverse_highres_data_tau0}
\end{figure}

\begin{figure}[h!]
  \begin{subfigure}[b]{0.49\textwidth}
    \centering
    \includegraphics[width=\textwidth]{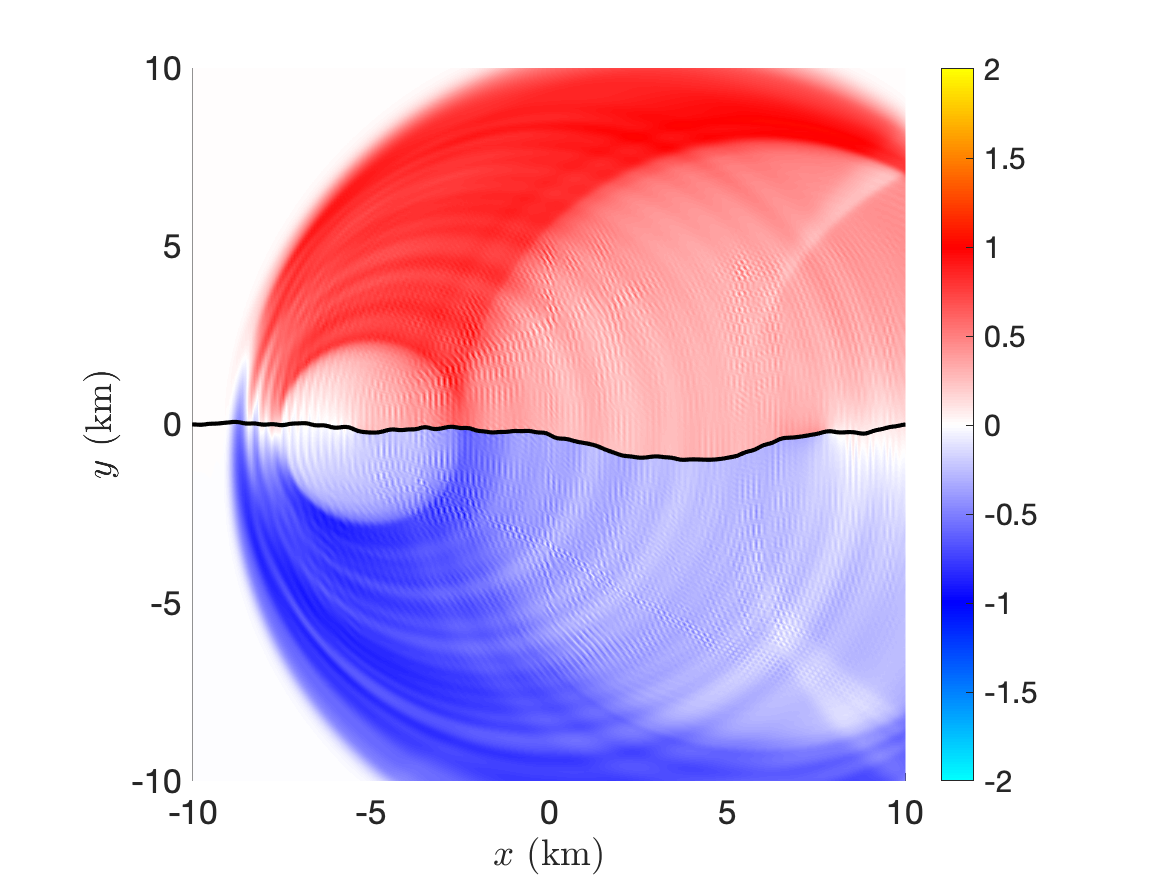}
    \caption{True parameters, computational grid}
    \label{fig:fractal_fault_v_lowres}
\end{subfigure}
\hfill
\begin{subfigure}[b]{0.49\textwidth}
    \centering
    \includegraphics[width=\textwidth]{fig/fractal_fault_v_t5_0005.eps}
    \caption{True parameters, high-resolution grid}
    \label{fig:fractal_fault_v_highres}
\end{subfigure}
  \begin{subfigure}[b]{0.49\textwidth}
    \centering
    \includegraphics[width=\textwidth]{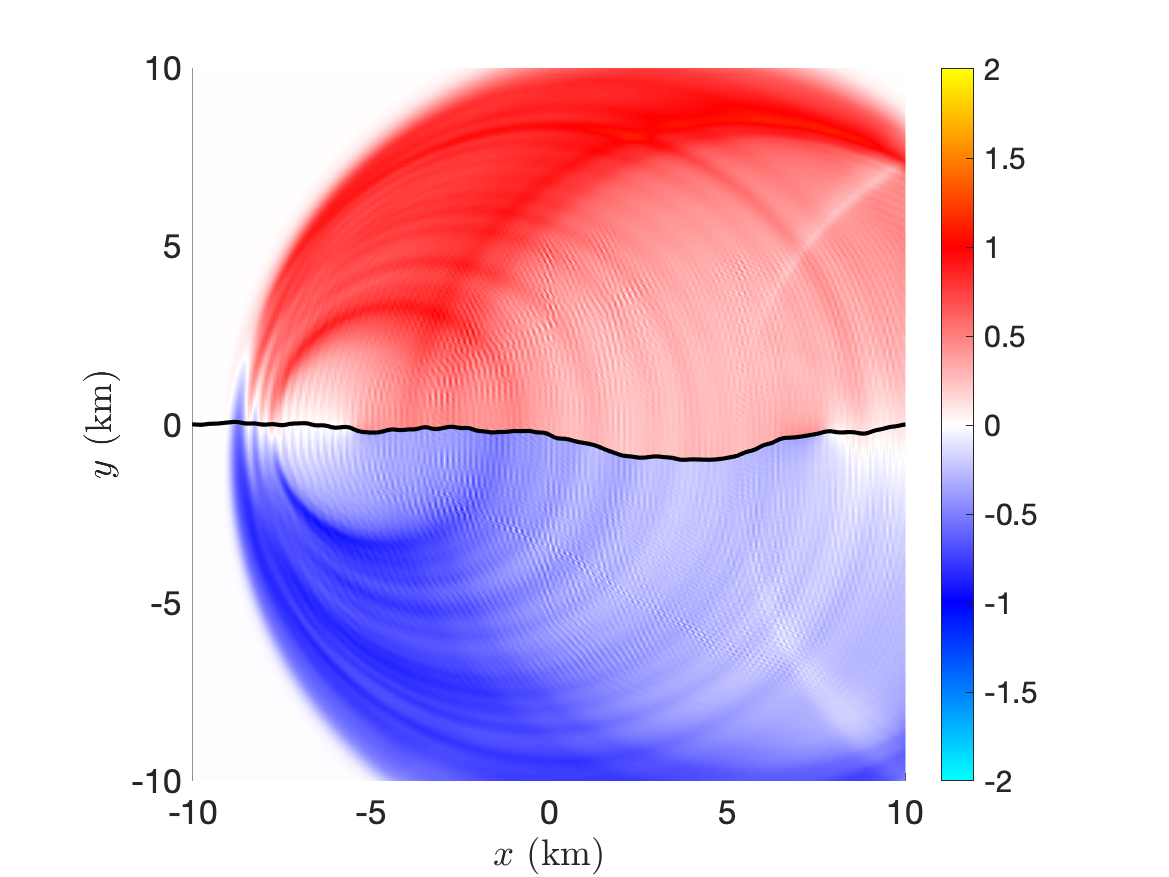}
    \caption{$\mathbf{a}$ 200 iterations}
    \label{fig:fractal_fault_v_inverted_a}
  \end{subfigure}
  \hfill
  \begin{subfigure}[b]{0.49\textwidth}
    \centering
    \includegraphics[width=\textwidth]{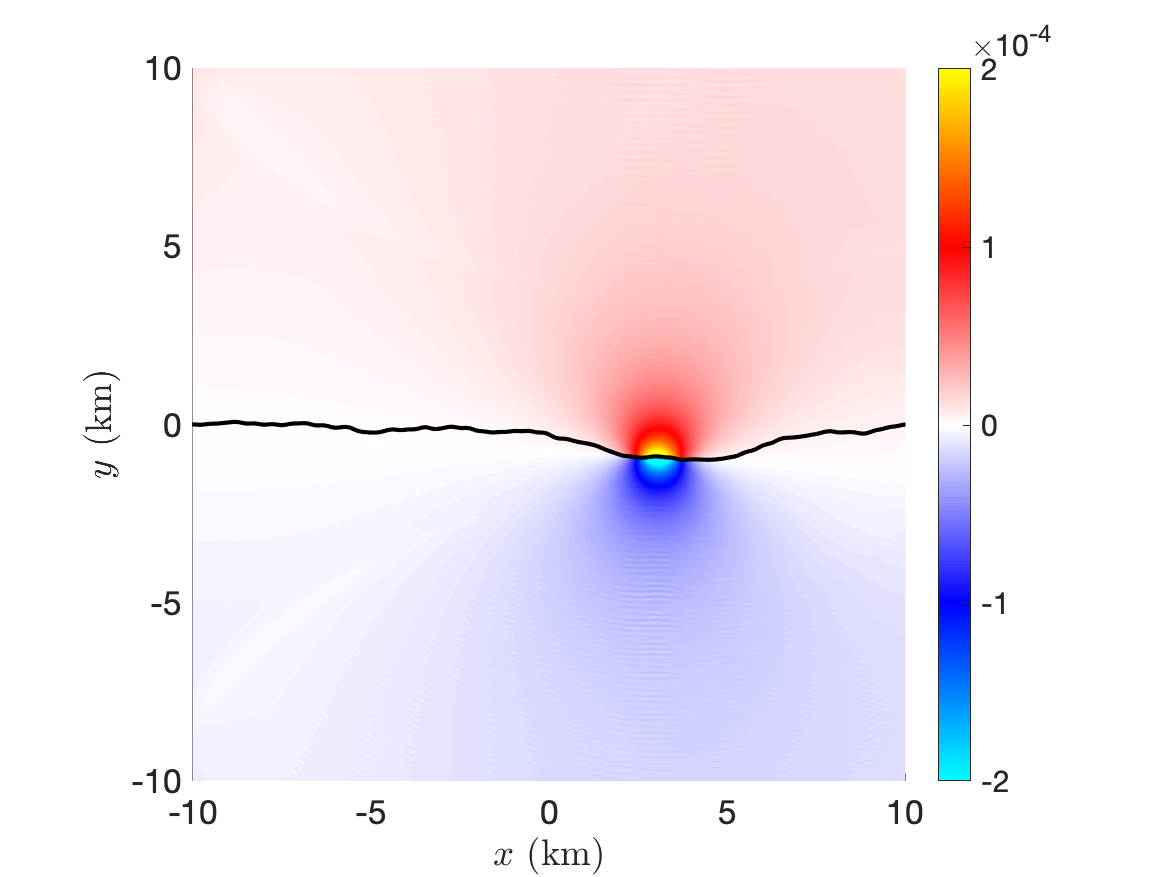}
    \caption{$\mathbf{a}$ initial parameters}
    \label{fig:fractal_fault_v_initial_a}
  \end{subfigure}
  \begin{subfigure}[b]{0.49\textwidth}
    \centering
    \includegraphics[width=\textwidth]{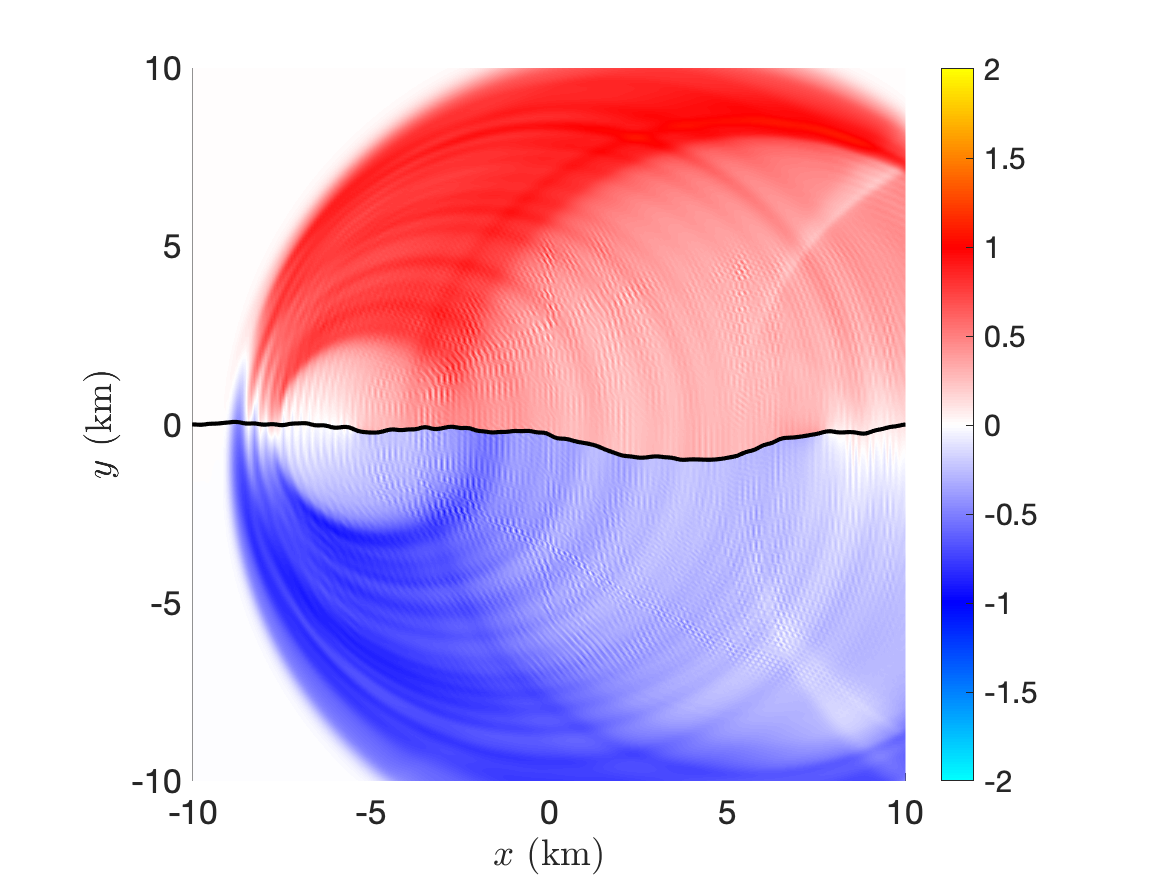}
    \caption{\changed{$\btau^0$} 200 iterations}
    \label{fig:fractal_fault_v_inverted_tau0}
  \end{subfigure}
  \hfill
  \begin{subfigure}[b]{0.49\textwidth}
    \centering
    \includegraphics[width=\textwidth]{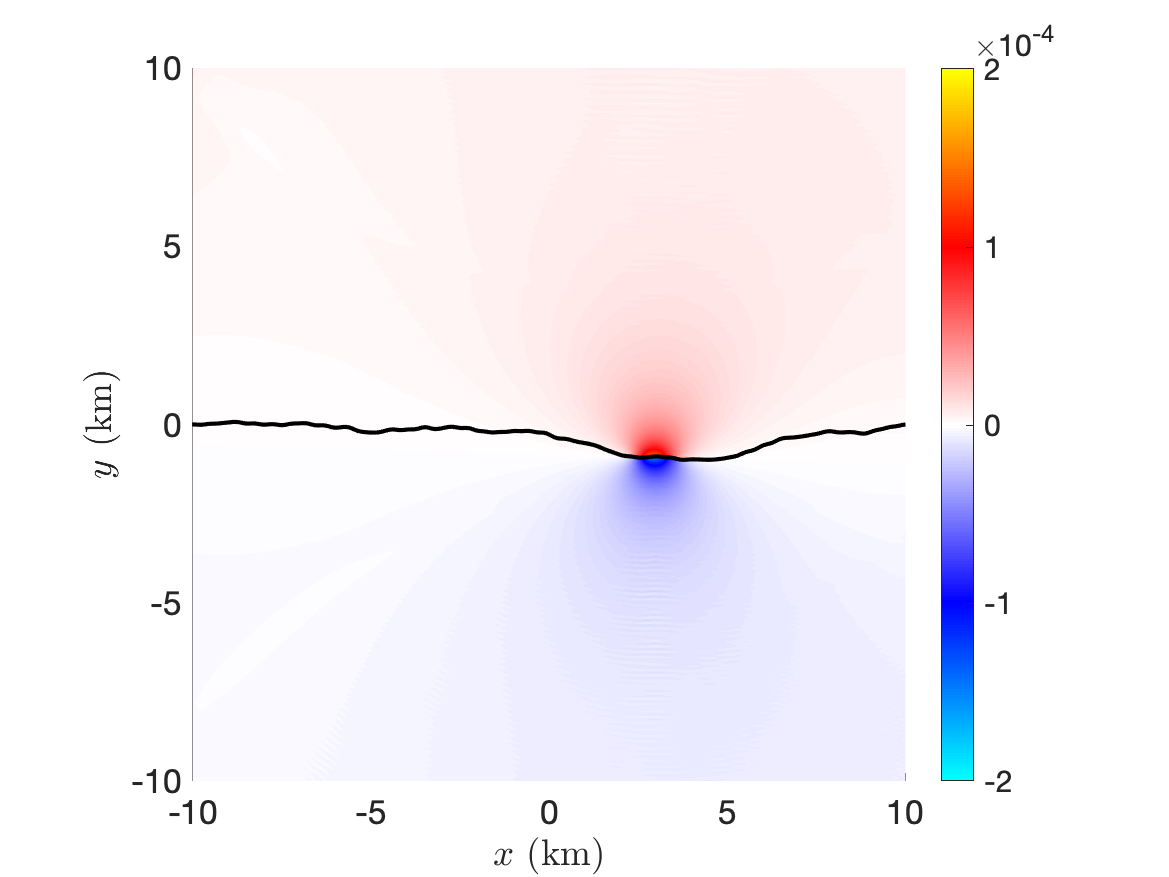}
    \caption{\changed{$\btau^0$} initial parameters}
    \label{fig:fractal_fault_v_initial_tau0}
  \end{subfigure}
  \caption{Comparison of velocity (m/s) at $t = 5$ using the true parameter sets on \changed{(\subref{fig:fractal_fault_v_lowres}) the computational grid, (\subref{fig:fractal_fault_v_highres}) the high-resolution grid}, \changed{(\subref{fig:fractal_fault_v_inverted_a}, \subref{fig:fractal_fault_v_inverted_tau0})} $\mathbf{a}$ and \changed{$\btau^0$} obtained after 200 L-BFGS iterations and \changed{(\subref{fig:fractal_fault_v_initial_a}, \subref{fig:fractal_fault_v_initial_tau0})} initial parameter values. Note the difference in scales using the initial parameter values.}
  \label{fig:fractal_fault_v_inversion}
\end{figure}

%\subsubsection{Inversion on a horizontal fault}
%\section{Aseismic slip in quasi-static plane strain}
\section{Conclusion}\label{sec:conclusion}
This work presents an adjoint-based optimization framework for earthquake modeling with rate-and-state friction. The main contributions are two-fold. Firstly, the continuous adjoint equations to linear elasticity with rate-state frictional faults are derived, considering displacement and velocity misfit functionals. The adjoint equations satisfy the equations of linear elasticity in reversed time, together with an adjoint friction law and state evolution equation. The adjoint friction law and state evolution equation resemble those of linearized rate-and-state friction but with time-dependent variable coefficients from the forward problem. \changed{Additionally, for forward problems involving fault normal stress changes, the adjoint equations include a nonzero fault opening condition, also with time-dependent variable coefficients from the forward problem.} By convolving the adjoint fields with derivatives of the friction law and state evolution equation, the gradient of the misfit with respect to the inversion parameter is obtained.

Secondly, we present SBP-SAT finite difference discretizations of the forward and adjoint equations in antiplane shear, utilizing the non-stiff interface treatment in \cite{erickson_et_al_2022} and the high-order accurate boundary-optimized SBP operators of \cite{stiernstrom_et_al_2023}. SBP-preserving interpolation allows the use of a lower-dimensional parameter representation while maintaining dual consistency. In combination with RK4 time integration, a fully discrete gradient expression is presented. Due to the self-adjointness of RK4 \cite{sanz_serna_2016,matsuda_miyatake_2021}, the space-time discretization is dual consistent, meaning that the gradient expression is the true gradient of the discrete misfit and consistent with the continuous gradient. This claim is corroborated by numerical experiments in a dynamic rupture setting, presented in Section \ref{sec:gradient_verification}. \reviewerOne{Dual consistency has been shown to be beneficial in other applications, see \eg \cite{hartmann_2007,hicken_zingg_2014}, where dual consistent discretizations leads to superconvergent approximations of integral functionals in fluid dynamics. The practical benefits of dual consistency for the type of inverse modeling considered herein are yet to be determined.}

We then proceed with inversions of model problems in increasingly complex settings in Sections \ref{sec:inverse_crimes} - \ref{sec:high_resolution_data}. To start with, inverse-crime inversions are performed. Here, we are able to reconstruct the direct effect parameter and fault shear traction fairly accurately in the velocity-weakening region of the fault with appreciable slip. In the velocity-strengthening region that has negligible slip, the parameter sensitivity is close to zero, due to a lack of radiation from this region. Next, inversions utilizing synthetic high-resolution data are performed. For the direct effect parameter, a local minimum is found, which exhibits oscillations in the parameter around the true value. Inversions for shear traction interpolate the true value fairly well. We conclude that regularization, likely in combination with filtering of the misfit and adjoint source signals, is required to further improve the results.

An additional issue which will arise in real-world applications is the likely trade-off between model parameters when doing inversions for both friction parameters and initial stresses. The wavefield that is measured by the receivers, which the inversion attempts to match, is controlled by the fault slip history. Slip is controlled by the reduction in stress from the initial shear stress to the residual strength, the product of dynamic friction and normal stress. Parameter trade-offs will likely exist between $a-b$, which determines dynamic friction, and the initial shear stress. Additional trade-offs can be anticipated by examination of the crack tip equation of motion, which balances the energy release rate with the fracture energy \cite{freund1998dynamic}. \cite{garagash2021fracture} provides expressions for fracture energy in terms of rate and state frictional parameters and normal stress, and the energy release rate is a functional of the shear stress drop history. Parameter combinations that produce similar rupture histories will not be uniquely constrained.

The primary future research direction is thus to investigate suitable regularizations of the misfit functional, which might be designed by anticipating trade-offs through knowledge of dynamic fracture mechanics. In addition, we will extend the adjoint framework to include other types of measurements, \eg, strain rate from fiber optics cables, pressure from hydrophones, and InSAR measurements of surface displacement. %Moreover, we will apply the inversion framework to aseismic slow slip events.

\section*{Acknowledgments}
We sincerely thank St\'{e}phanie Chaillat (ENSTA Paris, Stanford visiting scholar), Laura Bagur and Alice Nassor (ENSTA Paris) for fruitful discussions during the fall of 2022, on the treatment of displacement misfits in Section \ref{sec:misfits}.

V. Stiernstr\"{o}m was supported by the Swedish Research Council (grant no. 2017-04626). The computations were enabled by resources provided by the National Academic Infrastructure for Supercomputing in Sweden (NAISS) at UPPMAX partially funded by the Swedish Research Council through grant agreement no. 2022-06725.

E. M. Dunham was supported by the U.S. National Science Foundation (EAR-1947448).

\clearpage
\appendix
\section{Summation-by-parts properties}\label{app:sbp}
Integration by parts (IBP) is key in deriving energy balances and showing conservation properties of the continuous equations. SBP operators allow us to mimic the analysis in the discrete setting. It is shown in \cite{almquist_dunham_2020,almquist_dunham_2021} that $\bD_{II}(\bmu)$ satisfies SBP properties on $\bOmega$, consistent with the continuous IBP property for $\partial_I \mu\partial_I$ on $\Omega$. The results are summarized below.

For scalar functions $u$ and $v$, the variable coefficient Laplace operator satisfies
\begin{equation}\label{eq:ibp_laplace}
    \ip{v}{\partial_I \mu\partial_I u}_{\Omega} = \ip{v}{\tau_u}_{\partial\Omega}-\ip{\partial_{I}v}{\mu\partial_{I}u}_{\Omega},
\end{equation}
where $\tau_u = \mu\partial_{\hat{n}} u$. Integrating \changed{by parts} once more yields
\begin{equation}\label{eq:ibp_laplace_twice}
  \ip{v}{\partial_I \mu\partial_I u}_{\Omega} = \ip{v}{\tau_u}_{\partial\Omega} - \ip{\tau_v}{u}_{\partial\Omega} +\ip{\partial_I \mu\partial_I v}{u}_{\Omega},
\end{equation}
In the discrete setting let $\bD_I \approx \partial_I$ on $\bOmega$ be a first-derivative SBP operator, based on the same SBP norm $\bH$ as $\bD_{II}$. Then, $\bD_{II}(\bmu)$ satisfies the SBP property
\begin{equation}\label{eq:sbp_laplace}
    \ip{\bv}{\bD_{II}(\bmu)\bu}_{\bOmega} = \ip{\bv}{\btau_u}_{\bm{\partial\Omega}}-\ip{\bD_{I}\bv}{\dbar{\bmu}\bD_{I}\bu}_{\bOmega} - \bv^T\bR_\Delta\bu,
\end{equation}
where $\btau_u = \bT\bu$, with $\bT \approx \mu\partial_{\hat{n}}$ and $\bR_\Delta = \bR_\Delta^T \ge 0$, $\bR_\Delta \approx 0$. Thus \eqref{eq:sbp_laplace} is a discrete counterpart to the IBP property \eqref{eq:ibp_laplace}. Applying the SBP property once more yields
\begin{equation}\label{eq:sbp_laplace_twice}
  \ip{\bv}{\bD_{II}(\bmu)\bu}_{\bOmega} = \ip{\bv}{\btau_u}_{\bm{\partial\Omega}} - \ip{\btau_v}{\bu}_{\bm{\partial\Omega}} + \ip{\bD_{II}(\bmu)\bv}{\bu}_{\bOmega},
\end{equation}
For details on how $\bD_I$ and $\bT$ are constructed see \cite{almquist_dunham_2020,almquist_dunham_2021,stiernstrom_et_al_2023}.

The SBP property \eqref{eq:sbp_laplace} facilitates the derivation of a discrete energy rate for scalar wave equations. This is achieved by replacing $\bv$ by $\dot{\bu}$, in which case the volume terms $\ip{\cdot}{\cdot}_{\bOmega}$ contribute to the discrete potential energy rate. The discrete energy rates are then useful for guiding the enforcement of boundary or interface conditions, such that an energy-stable scheme may be obtained. See, \eg, \cite{duru_et_al_2019,wang_petersson_2019,almquist_dunham_2020,almquist_dunham_2021,erickson_et_al_2022}. The property \eqref{eq:sbp_laplace_twice} states that $\bD_{II}(\bmu)$ augmented with suitable boundary or interface treatment is self-adjoint with respect to $\bH_{\bOmega}$.

\section{Semi-discrete adjoint sources}\label{app:semi_discr_receiver}
To derive the semi-discrete gradient we seek an adjoint source $\bQ^\dagger$ satisfying
\begin{equation}\label{eq:semi_disc_adj_source_rel}
  \ip{\bQ^\dagger}{\frac{\partial \dot{\bu}}{\partial\bp}}_{\bOmega \times \T} = \frac{\partial \bm{\F}}{\partial \bp }.
\end{equation}
Consider the gradient of the misfit \eqref{eq:anti_plane_shear_semi_discr_misfit} with a single receiver. For convenience, we drop the superscripts. By the chain rule, it follows that
\begin{equation}
  \begin{aligned}
  \frac{\partial \bm{\F}}{\partial \bp } &= \frac{\partial }{\partial \bp}\left(\frac{1}{2} \int_{\T} \mathbf{r}^2\mathrm{d}t\right)= \int_{\T} \changed{\mathbf{r}}\ip{\hat{\bm{\delta}}}{\frac{\partial\mathbf{m}}{\partial\bp}}_{\bOmega}\mathrm{d}t = \ip{\mathbf{r}\hat{\bm{\delta}}}{\frac{\partial\mathbf{m}}{\partial\bp}}_{\bOmega \times \T}.
\end{aligned}
\end{equation}
If $\mathbf{m} = \dot{\bu}$, we have $\frac{\partial \bm{\F}}{\partial \bp} = \ip{\mathbf{r}\hat{\bm{\delta}}}{\frac{\partial \dot{\bu}}{\partial\bp}}_{\bOmega \times \T}$, which is of the desired form \eqref{eq:semi_disc_adj_source_rel}. If $\mathbf{m} = \bu$, introduce $\hat{\mathbf{r}}(t) = \int_0^t \mathbf{r}(t')\mathrm{d}t' + \hat{\mathbf{r}}_0$ with $\hat{\mathbf{r}}_0 = -\int_0^T \mathbf{r}(t')\mathrm{d}t'$ and integrate $\ip{\mathbf{r}\hat{\bm{\delta}}}{\frac{\partial\bu}{\partial\bp}}_{\bOmega \times \T}$ by parts in time to obtain $\frac{\partial \bm{\F}}{\partial \bp} = \ip{-\hat{\mathbf{r}}\hat{\bm{\delta}}}{\frac{\partial\dot{\bu}}{\partial\bp}}_{\bOmega \times \T}$. Thus,
\begin{equation}\label{eq:semi_discr_misfit_grad}
  \frac{\partial \bm{\F}}{\partial \bp} = \ip{\bS}{\frac{\partial\dot{\bu}}{\partial\bp}}_{\bOmega \times \T}, \quad \bS(t) = 
  \begin{cases} 
    -\hat{\mathbf{r}}(t)\hat{\bm{\delta}}(\bar{\bx}-\bar{\bx}_r),& \mathbf{m} = \bu, \\
    \mathbf{r}(t)\hat{\bm{\delta}}(\bar{\bx}-\bar{\bx}_r),& \mathbf{m} = \dot{\bu}.
  \end{cases}
\end{equation}
Summing $\bS$ over all receivers results in $\bQ^\dagger$ in \eqref{eq:anti_plane_shear_discr_adj}.

\section{Derivation of the semi-discrete gradient}\label{app:proof_grad}
To derive the gradient of \eqref{eq:anti_plane_shear_semi_discr_misfit}, the procedure in Section \ref{sec:adjoint_eqs} is followed. Omitting external forcing (since it does not contribute to the gradient) the Lagrangian misfit functional to \eqref{eq:anti_plane_shear_semi_discr_misfit} reads
\begin{equation}\label{eq:semi_discr_lagrangian}
  \begin{aligned}
    \bm{\L} = \bm{\F} &+ \ip{\dot{\bu}^\dagger}{\dbar{\brho}\ddot{\bu} - \bL(\bu,\bu^*,\btau^*)}_{\bOmega_+ \times \T} \\
    %----------------------------------
    &+ \ip{\dot{\bu}^\dagger}{\dbar{\brho}\ddot{\bu} - \bL(\bu,\bu^*,\btau^*)}_{\bOmega_- \times \T} \\
    %----------------------------------
    &+ \ip{\bPsi^\dagger}{\dot{\bPsi} - \bG}_{\bGamma \times \T},
  \end{aligned}
\end{equation}
where we omit subscripts $\pm$ of the grid functions since they are implied by the inner products. Consider first $\ip{\dot{\bu}^\dagger}{\bL(\bu,\bu^*,\btau^*)}_{\bOmega_+ \times \T}$. By \eqref{eq:anti_plane_shear_char_SAT} it follows that
\begin{equation}\label{eq:weak_form_char_fwd}
\begin{aligned}
&\ip{\dot{\bu}^\dagger}{\bL(\bu,\bu^*,\btau^*)}_{\bOmega_+ \times \T} = \\
%----------------------------------
&\ip{\dot{\bu}^\dagger}{\bD_{II}(\bmu) \bu + \text{SAT}(\bu,\bu^*,\btau^*)}_{\bOmega_+ \times \T}  = \\
%----------------------------------
&\ip{\dot{\bu}^\dagger}{\bD_{II}(\bmu) \bu}_{\bOmega_+ \times \T} +\ip{\dot{\bu}^\dagger}{\btau^* - \btau}_{\bm{\partial\Omega}_+ \times \T} - \ip{\dot{\btau}^\dagger}{\bu^* - \bu}_{\bm{\partial\Omega}_+ \times \T}.
\end{aligned}
\end{equation}
Swapping forward and adjoint variables yields
\begin{equation}\label{eq:weak_form_char_adj}
\begin{aligned}
&\ip{\dot{\bu}}{\bL(\bu^\dagger,\bu^{\dagger*},\btau^{\dagger*})}_{\bOmega_+ \times \T} = \\
&\ip{\dot{\bu}}{\bD_{II}(\bmu) \bu^\dagger}_{\bOmega_+ \times \T} +
\ip{\dot{\bu}}{\btau^{\dagger*} - \btau^\dagger}_{\bm{\partial\Omega}_+ \times \T} - \ip{\dot{\btau}}{\bu^{\dagger*} - \bu^\dagger}_{\bm{\partial\Omega}_+ \times \T}.
\end{aligned}
\end{equation}
Next, using the SBP property \eqref{eq:sbp_laplace_twice} and IBP in time it follows that
\begin{equation}\label{eq:sbp_volume_term}
    \begin{aligned}
         &\ip{\dot{\bu}^\dagger}{\bD_{II}(\bmu) \bu}_{\bOmega_+ \times \T} = \\
         -&\ip{\bD_{II}(\bmu)\bu^\dagger}{\dot{\bu}}_{\bOmega_+ \times \T}
         + \ip{\dot{\bu}^\dagger}{\btau}_{\bm{\partial\Omega}_+ \times \T} - \ip{\dot{\btau}^\dagger}{\bu}_{\bm{\partial\Omega}_+ \times \T},
    \end{aligned}
\end{equation}
\changed{where terms evaluated at $t=0$ and $t=T$ vanish due to the initial and terminal conditions \eqref{eq:anti_plane_shear_discr_fwd} and \eqref{eq:anti_plane_shear_discr_adj}}. Due to the $p$-independence of data, we assumed homogeneous initial conditions for the forward problem, since such terms would vanish in the gradient expression regardless. Using \eqref{eq:sbp_volume_term} in \eqref{eq:weak_form_char_fwd} together with \eqref{eq:weak_form_char_adj} yields
\begin{equation}\label{eq:weak_form_char_rel}
\begin{aligned}
&\ip{\dot{\bu}^\dagger}{\bL(\bu,\bu^*,\btau^*)}_{\bOmega_+ \times \T} = \\
-&\ip{\bD_{II}(\bmu)\bu^\dagger}{\dot{\bu}}_{\bOmega_+ \times \T} + \ip{\dot{\bu}^\dagger}{\btau^*}_{\bm{\partial\Omega}_+ \times \T} - \ip{\dot{\btau}^\dagger}{\bu^*}_{\bm{\partial\Omega}_+ \times \T} =\\
-&\ip{\dot{\bu}^\dagger}{\bL(\bu,\bu^*,\btau^*)}_{\bOmega_+ \times \T} + BT
\end{aligned}
\end{equation}
where
\begin{equation}\label{eq:weak_form_char_IT}
\begin{aligned}
BT &= \ip{\btau^{\dagger*} - \btau^\dagger}{\dot{\bu}}_{\bm{\partial\Omega}_+ \times \T} - \ip{\bu^{\dagger*} - \bu^\dagger}{\dot{\btau}}_{\bm{\partial\Omega}_+ \times \T} \\
%----------------------------------
  &+\ip{\dot{\bu}^\dagger}{\btau^*}_{\bm{\partial\Omega}_+ \times \T} - \ip{\dot{\btau}^\dagger}{\bu^*}_{\bm{\partial\Omega}_+ \times \T} \\
%----------------------------------
&= \ip{\btau^{\dagger*} - \btau^\dagger}{\dot{\bu}}_{\bm{\partial\Omega}_+ \times \T} + \ip{\dot{\bu}^{\dagger*} - \dot{\bu}^\dagger}{\btau}_{\bm{\partial\Omega}_+ \times \T}\\
%----------------------------------
& +\ip{\dot{\bu}^\dagger}{\btau^*}_{\bm{\partial\Omega}_+ \times \T} + \ip{\btau^\dagger}{\dot{\bu}^*}_{\bm{\partial\Omega}_+ \times \T}
\end{aligned}
\end{equation}
with the last equality obtained by IBP in time. Next we perform a change of variables in the term $\ip{\dot{\bu}^{\dagger*} - \dot{\bu}^\dagger}{\btau}_{\bm{\partial\Omega}_+ \times \T}$ from $\btau$ to $\tilde{\btau}$ using \eqref{eq:modified_tau}
\begin{equation}
\begin{aligned}
&\ip{\dot{\bu}^{\dagger*} - \dot{\bu}^\dagger}{\btau}_{\bm{\partial\Omega}_+ \times \T} = \ip{\dot{\bu}^{\dagger*} - \dot{\bu}^\dagger}{\tilde{\btau} - \gamma(\bu^* - \bu)}_{\bm{\partial\Omega}_+ \times \T} = \\
 & \ip{\dot{\bu}^{\dagger*} - \dot{\bu}^\dagger}{\tilde{\btau}}_{\bm{\partial\Omega}_+ \times \T} + \ip{\gamma(\bu^{\dagger*} - \bu^\dagger)}{\dot{\bu}^*}_{\bm{\partial\Omega}_+ \times \T} -\ip{\gamma(\bu^{\dagger*} - \bu^\dagger)}{\dot{\bu}}_{\bm{\partial\Omega}_+ \times \T},
\end{aligned}
\end{equation}
where IBP in time was used to obtain the last equality. Combining the result with \eqref{eq:weak_form_char_IT} and substituting $\tilde{\btau}^\dagger$ results in
\begin{equation}\label{eq:IT_simplified_1}
  \begin{aligned}
  BT &= \ip{\btau^{\dagger*} - \tilde{\btau}^\dagger}{\dot{\bu}}_{\bm{\partial\Omega}_+ \times \T} + \ip{\dot{\bu}^{\dagger*} - \dot{\bu}^\dagger}{\tilde{\btau}}_{\bm{\partial\Omega}_+ \times \T} \\
  &+\ip{\dot{\bu}^\dagger}{\btau^*}_{\bm{\partial\Omega}_+ \times \T} + \ip{\tilde{\btau}^\dagger}{\dot{\bu}^*}_{\bm{\partial\Omega}_+ \times \T} .
  \end{aligned}
\end{equation}
Next we use \eqref{eq:preserve_outgoing} and \eqref{eq:preserve_outgoing_adj} to obtain 
\begin{equation}\label{eq:preserve_outgoing_mod_tau}
    \begin{aligned}
        \tilde{\btau} &= \btau^* + Z(\dot{\bu} - \dot{\bu}^*), \\
        \tilde{\btau}^\dagger &= \btau^{\dagger*} -Z(\dot{\bu}^\dagger - \dot{\bu}^{\dagger*}).
    \end{aligned}
\end{equation}
Substituting \eqref{eq:preserve_outgoing_mod_tau} into \eqref{eq:IT_simplified_1} and eliminating terms yields
\begin{equation}\label{eq:IT_simplified_2}
\begin{aligned}
    BT 
    % &= \ip{\btau^{\dagger*} - \tilde{\btau}^\dagger}{\dot{\bu}}_{\bm{\partial\Omega} \times \T} + \ip{\dot{\bu}^{\dagger*} - \dot{\bu}^\dagger}{\tilde{\btau}}_{\bm{\partial\Omega} \times \T}  +\ip{\dot{\bu}^\dagger}{\btau^*}_{\bm{\partial\Omega} \times \T} + \ip{\tilde{\btau}^\dagger}{\dot{\bu}^*}_{\bm{\partial\Omega} \times \T} \\
    % %----------------------------------
    % &= \ip{Z(\dot{\bu}^\dagger - \dot{\bu}^{\dagger*})}{\dot{\bu}}_{\bm{\partial\Omega} \times \T} + \ip{\dot{\bu}^{\dagger*} - \dot{\bu}^\dagger}{\btau^* + Z(\dot{\bu} - \dot{\bu}^*)}_{\bm{\partial\Omega} \times \T}  \\
    % %----------------------------------
    % &+\ip{\dot{\bu}^\dagger}{\btau^*}_{\bm{\partial\Omega} \times \T} + \ip{\btau^{\dagger*} -Z(\dot{\bu}^\dagger - \dot{\bu}^{\dagger*})}{\dot{\bu}^*}_{\bm{\partial\Omega} \times \T} \\
    % %----------------------------------
    &= \ip{\dot{\bu}^{\dagger*}}{\btau^*}_{\bm{\partial\Omega}_+ \times \T} + \ip{\btau^{*\dagger}}{\dot{\bu}^*}_{\bm{\partial\Omega}_+ \times \T}.
\end{aligned}
\end{equation}
Since the target values satisfy the boundary conditions, $BT$ vanishes on all boundaries but the fault. This is straightforwardly seen on \changed{rigid-wall} or \reviewerOne{traction-free} boundaries, where $\bu^*$ and $\bu^{\dagger*}$, or $\btau^*$ and $\btau^{\dagger*}$, respectively, are zero. On \changed{non-reflecting} boundaries, substituting \eqref{eq:tau_star_bc}, \eqref{eq:ode_ustar_bc}, \eqref{eq:tau_star_bc_adj}, \eqref{eq:ode_ustar_bc_adj} with $\bR = \mathbf{0}$ into \eqref{eq:IT_simplified_2} yields the result. Adding contributions of \eqref{eq:IT_simplified_2} from $\bOmega_-$ yields
\begin{equation} \label{eq:IT_simplified_3}
IT = BT_+ + BT_- = \ip{\btau^{\dagger*}_+}{ \bV^*}_{\bGamma \times \T}  -
\ip{\bV^{\dagger *}}{\btau^*_+}_{\bGamma \times \T}.
\end{equation}
where it is assumed that $\bV^* = \jump{\dot{\bu}^*}$ and $\bV^{\dagger*} = -\jump{\dot{\bu}^{\dagger*}}$.

By \eqref{eq:semi_discr_lagrangian} and \eqref{eq:weak_form_char_rel} it then follows that
\begin{equation}
\begin{aligned}
\bm{\L} = \bm{\F} &- \ip{\dbar{\brho}\ddot{\bu}^\dagger - \bL(\bu^\dagger,\bu^{\dagger*},\btau^{\dagger*})}{\dot{\bu}}_{\bOmega_+ \times \T} \\
                  &- \ip{\dbar{\brho}\ddot{\bu}^\dagger - \bL(\bu^\dagger,\bu^{\dagger*},\btau^{\dagger*})}{\dot{\bu}}_{\changed{\bOmega_- \times \T}} \\
                  &- ST - IT,
\end{aligned}
\end{equation}
where 
\begin{equation}
    ST = \ip{\dot{\bPsi}^\dagger}{\bPsi}_{\bGamma \times \T} + \ip{\bPsi^\dagger}{\bG}_{\bGamma \times \T} - \ip{\bPsi_0^\dagger}{\bPsi_0}_{\bGamma},
\end{equation}
is obtained from IBP in time, where the term at $t = T$ vanishes due to the terminal condition in \eqref{eq:anti_plane_shear_discr_adj}.

We will now derive the gradient $\frac{\partial \bm{\L}}{\partial \bp}$. Combining \eqref{eq:semi_disc_adj_source_rel} with the adjoint scheme \eqref{eq:anti_plane_shear_discr_adj}, the gradient of the volume terms vanishes such that  $\frac{\partial \bm{\L}}{\partial \bp} = -\frac{\partial ST }{\partial \bp} - \frac{\partial IT}{\partial \bp}$. First, consider the inversion for a parameter in either $F$ or $G$. Since 
\begin{equation}
    \frac{\partial \btau_+^*}{\partial \bp} = -\left(\dbar{\bF}_V \frac{\partial \bV^*}{\partial \bp} + \dbar{\bF}_\Psi \frac{\partial \bPsi}{\partial \bp} + \dbar{\bF}_p \frac{\partial \bp}{\partial \bp}\right),
\end{equation}
and
\begin{equation}
        \frac{\partial ST}{\partial \bp}  = 
        \ip{\dot{\bPsi}^\dagger}{\frac{\partial \bPsi}{\partial \bp}}_{\bGamma \times \T} + \ip{\bPsi^\dagger}{\dbar{\bG}_V\frac{\partial \bV^*}{\partial \bp} + \dbar{\bG}_\Psi\frac{\partial \bPsi}{\partial \bp} + \dbar{\bG}_p\frac{\partial \bp}{\partial \bp}}_{\bGamma \times \T},
\end{equation}
we obtain
\begin{equation}
\begin{aligned}
    \frac{\partial IT}{\partial \bp} &= \ip{\btau^{\dagger*}_+}{ \frac{\partial \bV^*}{\partial \bp} }_{\bGamma \times \T} + \ip{\bV^{\dagger *}}{\dbar{\bF}_V \frac{\partial \bV^*}{\partial \bp}  + \dbar{\bF}_\Psi \frac{\partial \bPsi}{\partial \bp}  + \dbar{\bF}_p \frac{\partial \bp}{\partial \bp}}_{\bGamma \times \T} 
\end{aligned}
\end{equation}
and
\begin{equation}
\begin{aligned}
  \frac{\partial ST}{\partial \bp} + \frac{\partial IT}{\partial \bp} 
    = &\ip{\btau^{\dagger*}_+ + \dbar{\bF}_V \bV^{\dagger *} + \dbar{\bG}_V \bPsi^{\dagger}}{ \frac{\partial \bV^*}{\partial \bp} }_{\bGamma \times \T} \\
    %--------------------------
    + &\ip{\dot{\bPsi}^{\dagger}+\dbar{\bF}_\Psi \bV^{\dagger *} + \dbar{\bG}_\Psi \bPsi^{\dagger}}{  \frac{\partial \bPsi}{\partial \bp} }_{\bGamma \times \T} \\
    %--------------------------
    + &\ip{\dbar{\bG}_p \bPsi^{\dagger} + \dbar{\bF}_p \bV^{\dagger *}}{\frac{\partial \bp}{\partial \bp} }_{\bGamma \times \T}. 
\end{aligned}
\end{equation}
Using the discrete friction law \eqref{eq:tau_star_fault_adj} and state evolution equation \eqref{eq:state_evolution_discr_adj}, we arrive at
\begin{equation}\label{eq:grad_fault_p}
  \frac{\partial ST}{\partial \bp} + \frac{\partial IT}{\partial \bp} 
    = \ip{\dbar{\bG}_p \bPsi^{\dagger} + \dbar{\bF}_p \bV^{\dagger *}}{\frac{\partial \bp}{\partial \bp} }_{\bGamma \times \T}. 
\end{equation}
Similarly, if inverting for the initial state $\bPsi_0$
\begin{equation}
  \frac{\partial \btau_+^*}{\partial \bPsi_{0}} = -\left(\dbar{\bF}_V \frac{\partial \bV^*}{\partial \bPsi_{0}} + \dbar{\bF}_\Psi \frac{\partial \bPsi}{\partial \bPsi_{0}}\right),
\end{equation}
while
\begin{equation}
  \begin{aligned}
  \frac{\partial ST}{\partial \bPsi_{0}}  &= 
  \ip{\dot{\bPsi}^\dagger}{\frac{\partial \bPsi}{\partial \bPsi_{0}}}_{\bGamma \times \T} + \ip{\bPsi^\dagger}{\dbar{\bG}_V\frac{\partial \bV^*}{\partial \bPsi_{0}} + \dbar{\bG}_\Psi\frac{\partial \bPsi}{\partial \bPsi_{0}}}_{\bGamma \times \T} \\
  &- \ip{\bPsi_0^\dagger}{\frac{\partial \bPsi_0}{\partial \bPsi_{0}}}_{\bGamma},
  \end{aligned}
\end{equation}
such that
\begin{equation}\label{eq:grad_fault_psi0}
  \frac{\partial ST}{\partial \bPsi_{0}} + \frac{\partial IT}{\partial \bPsi_{0}} 
    = - \ip{\bPsi_0^\dagger}{\frac{\partial \bPsi_0}{\partial \bPsi_{0}}}_{\bGamma} .
\end{equation}
Since (in \changed{index} notation) $\frac{\partial \bp_j}{\partial \bp_i} = \delta_{ij}$ (and similarly for $\bPsi_0$) \eqref{eq:anti_plane_shear_semi_discr_grad} follows.

\begin{remark}
To reach \eqref{eq:grad_fault_p} and \eqref{eq:grad_fault_psi0} we have assumed  $\bV^* = \jump{\dot{\bu}^*}$ and $\bV^{\dagger*} = -\jump{\dot{\bu}^{\dagger*}}$. This is true for $\bV^{\dagger*}$, since the adjoint scheme satisfies \eqref{eq:V_star_nonlin_adj} exactly, due to $F^{\dagger}$ being linear in $V^\dagger$ such that the solve is trivial. However, due to the non-linear solve required in \eqref{eq:V_star_nonlin},
\begin{equation}  
  \jump{\dot{\bu}^*} = \bV^* + \changed{\bm{\varepsilon}}.
\end{equation}
When adding the contributions from the interface terms, this leads to
\begin{equation} \label{eq:IT_simplified_with_error}
  IT = \ip{\btau^{\dagger*}_+}{\bV^* + \changed{\bm{\varepsilon}}}_{\bGamma \times \T}  -
\ip{\bV^{\dagger *}}{\btau^*_+}_{\bGamma \times \T} ,
\end{equation}
such that
\begin{equation}
  \frac{\partial \bm{\F}^{(\varepsilon)}}{\partial \bp_i} = \frac{\partial \bm{\F}}{\partial \bp_i} + \ip{\btau^{\dagger*}_+}{\frac{\partial \changed{\bm{\varepsilon}}}{\partial \bp_i}}_{\bGamma \times \T}
\end{equation}
is the resulting gradient. Thus, the error in the non-linear solve results in an error in the semi-discrete gradient. However, \changed{since $F$ is invertible and monotone on the interval $[V_{min}, V_{max}]$, the root can bracketed, \ie, $\bV^* \in [\bV_{min}, \bV_{max}]$} \cite{erickson_et_al_2022}. Therefore we can control \changed{$\bm{\varepsilon}$} arbitrarily through the tolerance in a bracketed root finding algorithm, \eg bisection.
\end{remark}

\bibliographystyle{abbrvnat}
\bibliography{bibfile}

\end{document}